\documentclass[11pt]{article}
\usepackage{amsfonts}
\usepackage{amssymb}
\usepackage{amsthm}
\usepackage{amsmath}
\usepackage{graphicx}
\usepackage{empheq}
\usepackage{indentfirst}
\usepackage{cite} 
\usepackage{mathrsfs}
\usepackage{cases}
\usepackage{graphics}
\usepackage{xcolor}  
\usepackage{bm}
\usepackage{makeidx}
\usepackage[T1]{fontenc}
\usepackage{times}  
\usepackage{appendix}
\textwidth=16cm
\textheight=24.5cm
\topmargin=-2.4cm
\oddsidemargin=0cm
\flushbottom
\newtheorem{theorem}{\textbf{Theorem}}[section]
\newtheorem{lemma}{\textbf{Lemma}}[section]
\newtheorem{proposition}{\textbf{Proposition}}[section]
\newtheorem{corollary}{\textbf{Corollary}}[section]
\newtheorem{remark}{\textbf{Remark}}[section]
\newtheorem{definition}{\textbf{Definition}}[section]
\allowdisplaybreaks[4] 
%
%
\setcounter{tocdepth}{2}
\def\be{\begin{equation}}
\def\ee{\end{equation}}
\def\bea{\begin{eqnarray}}
\def\eea{\end{eqnarray}}
\def\bt{\begin{theorem}}
\def\et{\end{theorem}}
\def\bl{\begin{lemma}}
\def\el{\end{lemma}}
\def\br{\begin{remark}}
\def\er{\end{remark}}
\def\bp{\begin{proposition}}
\def\ep{\end{proposition}}
\def\bc{\begin{corollary}}
\def\ec{\end{corollary}}
\def\bd{\begin{definition}}
\def\ed{\end{definition}}
%

%

%


\begin{document}

\title{Regularity Propagation of Global Weak Solutions to a  Navier--Stokes--Cahn--Hilliard System for Incompressible Two-phase Flows with Chemotaxis and Active Transport}

\author{
	Jingning He
	\footnote{School of Mathematics, Hangzhou Normal University, Hangzhou 311121, Zhejiang Province, P.R. China. Email: \texttt{jingninghe2020@gmail.com}
	},
\ \ Hao Wu\footnote{Corresponding author. School of Mathematical Sciences, Fudan University, Shanghai 200433, P.R. China.
		Email: \texttt{haowufd@fudan.edu.cn}
	}
}
\date{\today}
\maketitle


\begin{abstract}
\noindent
We analyze a diffuse interface model that describes the dynamics of incompressible viscous two-phase flows, incorporating mechanisms such as chemotaxis, active transport, and long-range interactions of Oono's type. The evolution system couples the Navier--Stokes equations for the volume-averaged fluid velocity $\bm{v}$, a convective Cahn--Hilliard equation for the phase-field variable $\varphi$, and an advection-diffusion equation for the density of a chemical substance $\sigma$. For the initial boundary value problem with a physically relevant singular potential in three dimensions, we demonstrate that every global weak solution $(\bm{v}, \varphi, \sigma)$ exhibits a propagation of regularity over time. Specifically, after an arbitrary positive time, the phase-field variable $\varphi$ transitions into a strong solution, whereas the chemical density $\sigma$ only partially regularizes. Subsequently, the velocity field $\bm{v}$ becomes regular after a sufficiently large time, followed by a further regularization of the chemical density $\sigma$, which in turn enhances the spatial regularity of $\varphi$. Furthermore, we show that every global weak solution stabilizes towards a single equilibrium as $t\to +\infty$. Our analysis uncovers the influence of chemotaxis, active transport, and long-range interactions on the propagation of regularity at different stages of time. The proof relies on several key points, including a novel regularity result for a convective Cahn--Hilliard--diffusion system with a velocity field $\bm{v}$ of Leray type, the strict separation property of $\varphi$ for large times, as well as two conditional uniqueness results pertaining to the full system and its subsystem for $(\varphi, \sigma)$ with a given velocity, respectively.
\medskip \\
\noindent
\textit{Keywords}: Navier--Stokes--Cahn--Hilliard system, chemotaxis, active transport, nonlocal interaction, global regularity, long-time behavior.
\medskip \\
\noindent
\textit{MSC 2020}: 35A01, 35A02, 35K35, 35Q92, 76D05.
\end{abstract}

\tableofcontents

\section{Introduction}
\setcounter{equation}{0}
\noindent

The study of multi-phase flows has garnered significant attention in various fields of science and engineering. One of the efficient mathematical approaches is the diffuse interface model, which assumes that an immiscible fluid mixture undergoes a smooth and rapid transition within an interfacial region between its components, known as partial diffusive mixing. This approach results in an Eulerian formulation of the phase-field equation and permits topology changes such as breakup and coalescence. Different types of diffuse interface models have been introduced to accommodate a wide range of physical and biological applications, see, e.g., \cite{A2012,AMW,EG19jde,Gur,GLSS,HH,LS,Mi19} and the references cited therein.

In this work, we are interested in the mathematical analysis of the following Navier--Stokes--Cahn--Hilliard type system
\begin{subequations}\label{nsch}
	\begin{alignat}{3}
	&\partial_t  \bm{ v}+\bm{ v} \cdot \nabla  \bm {v}-\textrm{div} \big(2  \nu(\varphi) D\bm{v} \big)+\nabla p=(\mu+\chi \sigma)\nabla \varphi,\label{f3.c} \\
	&\textrm{div}\, \bm{v}=0,\label{f3.c1}\\
	&\partial_t \varphi+\bm{v} \cdot \nabla \varphi=\Delta \mu-\alpha(\overline{\varphi}-c_0),\label{f1.a} \\
	&\mu=-\varepsilon \Delta \varphi+ \frac{1}{\varepsilon} \varPsi'(\varphi)-\chi \sigma+\beta\mathcal{N}(\varphi-\overline{\varphi}),\label{f4.d}\\
	&\partial_t \sigma+\bm{v} \cdot \nabla \sigma= \Delta (\sigma-\chi\varphi), \label{f2.b}
	\end{alignat}
\end{subequations}
in $\Omega\times(0,+\infty)$. This system is subject to the following boundary and initial conditions
\begin{alignat}{3}
&\bm{v}=\mathbf{0},\quad {\partial}_{\bm{n}}\varphi={\partial}_{\bm{n}}\mu={\partial}_{\bm{n}}\sigma=0,\qquad\qquad &\textrm{on}& \   \partial\Omega\times(0,+\infty),
\label{boundary}
\\
&\bm{v}|_{t=0}=\bm{v}_{0}(x),\quad \varphi|_{t=0}=\varphi_{0}(x), \quad  \sigma|_{t=0}=\sigma_{0}(x), \qquad &\textrm{in}&\ \Omega.
\label{ini0}
\end{alignat}
Here, $\Omega \subset\mathbb{R}^3$ is a bounded domain with smooth boundary $\partial\Omega$, $\bm{n}$ is the unit outward normal vector on $\partial \Omega$, and $\partial_{\bm{n}}$ denotes the outer normal derivative on $\partial \Omega$.

System \eqref{nsch} can be regarded as a simplified version of the general thermodynamically consistent diffuse interface model derived in \cite{LW}. This model describes the dynamics of a mixture of two viscous incompressible fluids, with a (soluble) chemical species subject to the diffusion process, along with significant mechanisms such as chemotaxis and active mass transport (see also \cite{Sitka} and the references cited therein). The parameter $\varepsilon>0$ characterizes the thickness of interfacial layers between two fluid components. Since the asymptotic behavior as $\varepsilon\to 0^+$ (i.e., the sharp-interface limit) will not be investigated in this study, we simply set $\varepsilon =1$ in the subsequent analysis. The state variables of system \eqref{nsch} are given by $(\bm{v},p,\varphi, \mu, \sigma)$. The vectorial function $\bm{v}: \Omega \times[0,+\infty)\to \mathbb{R}^3$ denotes the volume-averaged velocity with $D\bm{v}=\frac{1}{2}(\nabla\bm{ v}+(\nabla\bm{ v})^\mathrm{T})$ being the symmetrized velocity gradient, and the scalar function $p:\Omega \times[0,+\infty)\to \mathbb{R}$ denotes the (modified) pressure. The order parameter (also called the phase-field variable)  $\varphi:\Omega \times[0,+\infty)\to [-1,1]$ denotes the difference in volume fractions of the binary mixture such that the region $\{\varphi=1\}$ represents the  fluid 1 and $\{\varphi=-1\}$ represents the fluid 2 (i.e., the values $\pm 1$ represent the pure phases). The scalar function $\sigma: \Omega \times[0,+\infty)\to \mathbb{R}$ denotes the concentration of an unspecified chemical species (e.g., a nutrient in the context of tumor growth modeling \cite{GLSS,GL17e}). Finally, the scalar function $\mu: \Omega \times(0,+\infty)\to \mathbb{R}$ is referred to as the chemical potential in the terminology of Cahn--Hilliard \cite{CH}. For the sake of simplicity, we assume that the density difference between the two components of the fluid mixture is negligible and set the fluid density to be one. Additionally, we assume that the mobilities for $\varphi$ and $\sigma$ are positive constants (set to be one), while allowing the fluid mixture to have unmatched viscosities. Assuming that $\nu_1$, $\nu_2>0$ are viscosities of the two homogeneous fluids, the mean viscosity is given by the concentration dependent function $\nu=\nu(\varphi)$, for instance, the linear combination:
\be
\nu(\varphi)=\nu_1\frac{1+\varphi}{2}+\nu_2\frac{1-\varphi}{2}.
\label{vis}
\ee
The nonlinear function $\varPsi$ in \eqref{f4.d} denotes the homogeneous free energy density for the mixture, which has a double-well structure with two minima and a local unstable maximum in between. A physically significant example is the Flory--Huggins type (see \cite{CH, Gur}):
\be
\varPsi (r)=\frac{\theta}{2}\big[(1-r)\ln(1-r)+(1+r)\ln(1+r)\big]+\frac{\theta_{0}}{2}(1-r^2),\quad \forall\, r\in[-1,1],
\label{pot}
\ee
with $0<\theta<\theta_{0}$. In the literature, the singular potential \eqref{pot} is often approximated by a fourth-order polynomial (e.g., in the regime of shallow quench)
\be
\varPsi(r)=\frac{1}{4}\big(1-r^2\big)^2,\quad \forall\, r\in\mathbb{R}, \label{regular}
\ee
or by some more general polynomial functions (see \cite{Mi19}). This type of approximation rules out possible singularities of the logarithmic potential \eqref{pot} (and its derivatives) at $\pm 1$ and brings great convenience in the mathematical analysis as well as numerical simulations for the Cahn--Hilliard equation and its variants, see \cite{BGM,DFW,GG2010,JWZ,LS,ZWH} and the references therein.

The evolution of velocity $\bm{v}$ and pressure $p$ is governed by the Navier--Stokes equations \eqref{f3.c}--\eqref{f3.c1} under the influence of a Korteweg-type force proportional to $(\mu+\chi \sigma)\nabla \varphi$. The dynamics of $(\varphi, \mu)$ is governed by a convective Cahn--Hilliard equation \eqref{f1.a}--\eqref{f4.d}, while $\sigma$ satisfies an advection-diffusion equation \eqref{f2.b}.
In \eqref{f1.a} and \eqref{f4.d} we denote by
$\overline{\varphi}=|\Omega|^{-1}\int_\Omega \varphi(x)\,\mathrm{d}x$ the mean of the phase function $\varphi$ over $\Omega$. In addition, $\mathcal{N}$ denotes the inverse of the minus Laplacian operator subject to the homogeneous Neumann boundary condition in some function space with a zero mean constraint (see Section \ref{pm}). As shown in \cite{LW}, the hydrodynamic system \eqref{nsch}--\eqref{ini0} is thermodynamically consistent.  Sufficiently regular solutions satisfy the law of energy dissipation
\begin{align}
& \frac{\mathrm{d}}{\mathrm{d}t} \left( \int_{\Omega} \frac{1}{2}|\bm{v}|^2\, \mathrm{d}x+ \mathcal{F}(\varphi,\sigma)\right)   +\int_{\Omega} \left( 2\nu(\varphi)|D\bm{v}|^2 + |\nabla \mu|^2+|\nabla(\sigma-\chi\varphi)|^2\right) \mathrm{d}x\nonumber\\
&\quad  =-\alpha\int_\Omega (\overline{\varphi}-c_0)\mu\, \mathrm{d}x,
\quad \forall\, t>0,
\label{BEL}
\end{align}
where the free energy is given by (taking $\varepsilon=1$)
\be
\mathcal{F}(\varphi,\sigma)=\int_{\Omega}
\left(\frac{1} {2}|\nabla \varphi|^2+ \varPsi(\varphi)+\frac{1}{2}|\sigma|^2-\chi\sigma\varphi +\frac{\beta}{2}|\nabla\mathcal{N}(\overline{\varphi}-\varphi)|^2\right) \,\mathrm{d}x.
\label{fe1}
\ee
Then we see that the chemical potential $\mu$ is obtained by the first variation of $\mathcal{F}(\cdot,\sigma)$.
Next, (formally) integrating the equation \eqref{f1.a} over $\Omega$ and using the boundary condition \eqref{boundary}, we have the mass relation
\begin{align}
\frac{\mathrm{d}}{\mathrm{d}t}(\overline{\varphi}-c_0) +\alpha(\overline{\varphi}-c_0)=0,
\notag
\end{align}
which implies
\begin{align}
\overline{\varphi}(t)-c_0=(\overline{\varphi_0}-c_0)e^{-\alpha t},\quad \forall\, t\geq 0.
\label{mph2}
\end{align}
A similar argument for \eqref{f2.b} yields
\begin{align}
\overline{\sigma}(t)=\overline{\sigma_0},\quad \forall\, t\geq 0.
\label{mph3}
\end{align}

The coupling structures of the system \eqref{nsch} are characterized by the parameters $\alpha$, $\beta$ and $\chi$. The two terms $\alpha(\overline{\varphi}-c_0)$ and $\beta\mathcal{N}(\varphi-\overline{\varphi})$ represent certain nonlocal interactions (i.e., reactions) between the two fluid components. Throughout the paper, we assume that $\alpha\geq 0$ and $\beta\in \mathbb{R}$.
The role of $\alpha(\overline{\varphi}-c_0)$ has two aspects. In view of \eqref{BEL}, it can influence the energy dissipation of the system. On the other hand, it may yield different mass relations for $\varphi$, that is, if $\alpha=0$ or $\overline{\varphi_0}=c_0$ for $\alpha>0$, then the mass $\overline{\varphi}(t)$ is conserved in time, otherwise $\overline{\varphi}(t)$ converges exponentially fast to $c_0$ provided that $\alpha>0$ (i.e., the so-called off-critical case, cf. \cite{BGM,GGM2017,MT}). Neglecting the coupling with $\bm{v}$ and $\sigma$, for $\alpha=\beta=0$, we recover the classical Cahn--Hilliard equation with constant mobility \cite{CH,CMZ,Mi19}. When taking $\alpha=\beta>0$, we obtain the well-known Cahn--Hilliard--Oono (CHO) equation
\begin{align}
\partial_t \varphi+\alpha(\varphi-c_0) =\Delta(-\Delta \varphi+ \varPsi'(\varphi)),
\label{CHO}
\end{align}
which was proposed to describe the dynamics of
microphase separation of diblock copolymers \cite{OP87}. See also \cite{Glo95} for a different physical origin from a binary mixture with reversible isomerization chemical reaction, where the parameters $\alpha$ and $c_0$ can be determined by the forward and backward reaction rates. If $\overline{\varphi_0}=c_0$, the CHO equation \eqref{CHO} can be viewed as a conserved gradient flow of the Ohta--Kawasaki functional (with $\varPsi$ given by \eqref{regular})
$$
\mathcal{F}_{\mathrm{OK}}(\varphi)= \int_{\Omega}
 \left(\frac{1} {2}|\nabla \varphi|^2+ \varPsi(\varphi) \right) \mathrm{d}x + \alpha \int_{\Omega}\!\int_\Omega
 G(x,y)(\varphi(x)-\overline{\varphi}) (\varphi(y)-\overline{\varphi})\,\mathrm{d}x\mathrm{d}y,
$$
where $G$ denotes the Green function associated to the (minus) Laplacian subject to a homogeneous Neumann boundary condition (see e.g., \cite{Hoff22}). For results on well-posedness and long-time behavior of solutions to the CHO equation, we refer to, e.g., \cite{GGM2017,Mi11}.

The coefficient $\chi$ is related to some specific transport mechanisms, such as chemotaxis (as seen in \eqref{f4.d}) and active transport (as seen in \eqref{f2.b}). In the context of tumor growth modeling, $\chi$ characterizes the chemotactic response of the mixture to the nutrient, as well as the evolution of the nutrient influenced by the mixture. For detailed explanations, see \cite{GLSS,LW}.
The term ``active transport" is used when $\chi>0$ in the biological sense, indicating that some kind of mechanism is required to maintain the transport. This is in contrast to passive transport processes, such as diffusion, which are driven only by the concentration gradient \cite{GL17e}. From a mathematical perspective, the term $-\chi\Delta \varphi$ introduces a cross-diffusion structure in \eqref{f2.b}, which poses additional challenges in the analysis. However, the sign of $\chi$ does not play a role, so we allow $\chi$ to be any real number, that is, $\chi\in \mathbb{R}$.

We observe that the well-known ``Model H'' for incompressible viscous two-phase flows (see \cite{HH,Gur}) can be recovered from the system \eqref{nsch} by neglecting $\sigma$ and setting $\alpha=\beta=\chi=0$. According to \eqref{BEL} and \eqref{mph2}, the resulting Navier--Stokes--Cahn--Hilliard (NSCH) system possesses a dissipative structure (i.e., its total energy decreases over time) and conserves $\overline{\varphi}$ for all $t\geq 0$. Due to these two crucial properties, the NSCH system (with either a regular or a singular potential) has been extensively studied in the literature. In this regard, we refer to \cite{A2009,Boyer,GG2010,GMT,LS,ZWH} and the references therein for the results on well-posedness, long-time behavior of global solutions, and asymptotic analyses. See also \cite{A2012,A2022,Gio2021,Gio2022} for contributions on the more complex case with unmatched densities. When the equation for $\sigma$ is neglected, our system \eqref{nsch} also reduces to a variant of the Navier--Stokes--Cahn--Hilliard--Oono (NSCHO) system, which has been studied in \cite{BGM,MT,H03}.
Compared to the basic energy law \eqref{BEL}, the NSCHO system in \cite{BGM,MT} exhibits a higher order energy production rate without a definite sign, given by $-\alpha\int_\Omega (\varphi-c_0)\widetilde{\mu}\, \mathrm{d}x$ with $\widetilde{\mu}=-\Delta \varphi+\varPsi'(\varphi)$ (recalling that $\overline{\varphi}$ depends solely on time and decays to $c_0$ exponentially fast). This fact leads to significant challenges in studying global well-posedness and long-time behavior, as discussed in \cite{MT}. For the single CHO equation \eqref{CHO} without fluid interaction, the authors of \cite{GGM2017} overcame the difficulty by considering the Ohta--Kawasaki functional $\mathcal{F}_{\mathrm{OK}}$ and rewriting the CHO equation as
 \begin{align}
\partial_t \varphi+\alpha(\overline{\varphi}-c_0) =\Delta\big(-\Delta \varphi+ \varPsi'(\varphi)+\alpha \mathcal{N}(\varphi-\overline{\varphi})\big)=\Delta \frac{\delta \mathcal{F}_{\mathrm{OK}}(\varphi)}{\delta \varphi}.
\notag
\end{align}
For the hydrodynamic system \eqref{nsch}--\eqref{f2.b},
we proposed in \cite{HW2022} a decomposition of Oono's nonlocal interaction term, given by the general form $\beta(\varphi-\overline{\varphi})+\alpha(\overline{\varphi}-c_0)$
with $\beta\in \mathbb{R}$ and $\alpha \geq 0$. This modification incorporates nonlocal effects into the capillary force, see \eqref{f3.c}. It not only aligns with the variational framework of \cite{GLSS,LW}, but also offers a suitable correction to the NSCHO system presented in the previous literature \cite{BGM,MT}.

Thanks to the energy balance \eqref{BEL} and the mass relations \eqref{mph2}, \eqref{mph3}, in our recent work \cite{HW2022}, we were able to show that, in the three-dimensional setting, if the initial velocity is small and both the initial phase-field function and the initial chemical density are small perturbations of a local minimizer of the free energy $\mathcal{F}$, then problem \eqref{nsch}--\eqref{ini0} admits a unique global strong solution. Furthermore, we proved the uniqueness of the asymptotic limit for every global strong solution as $t\to +\infty$ and provided an estimate on the convergence rate. The proofs relied on the dissipative structure of the system and an extended {\L}ojasiewicz--Simon type gradient inequality. However, whether global weak solutions to problem \eqref{nsch}--\eqref{ini0} exist and will regularize like solutions to the NSCH system, as demonstrated in \cite{A2009,A2022}, remains an open question.

The aim of this paper is to establish the regularity and long-time behavior of global weak solutions to problem \eqref{nsch}--\eqref{ini0} in the three-dimensional setting, when a physically relevant singular potential such as \eqref{pot} is chosen.

(1) Our first result states that every global weak solution $(\bm{v},\varphi, \sigma)$ exhibits a propagation of regularity over time, see Theorem \ref{2main}. Specifically, after an arbitrary positive time, the phase-field $\varphi$ transitions into a strong solution, whereas the chemical density $\sigma$ only partially regularizes. Subsequently, the velocity field $\bm{v}$ becomes regular after a sufficiently large time, followed by a further regularization of the chemical density $\sigma$, which in turn enhances the spatial regularity of $\varphi$. The proof of Theorem \ref{2main} is inspired by recent work \cite{A2022} on the global regularity of weak solutions to the NSCH system with unmatched densities. Although we tackle the simpler case of a fluid mixture with constant density, additional challenges arise due to chemotaxis, mass transport, and nonlocal interactions, making it rather difficult to establish the regularity of $(\varphi, \sigma)$. Specifically, the chemical density $\sigma$ exhibits a novel phenomenon, undergoing regularization at different stages of time. For discussions on the strategy of proofs, we refer to Section \ref{sec:sPF}.

(2) Our second result pertains to the long-time behavior. Under the additional assumption that $\varPsi$ is real analytic in $(-1,1)$ (which is true for \eqref{pot}), we demonstrate that every global weak solution $(\bm{v},\varphi, \sigma)$ converges to a single equilibrium $(\mathbf{0}, \varphi_\infty,\sigma_\infty)$ as $t\to +\infty$ and provide an estimate of the convergence rate, as stated in Theorem \ref{3main1}. This conclusion is reached by applying Theorem \ref{2main} and employing the same argumentation as presented in \cite{HW2022}.

Before concluding the Introduction, we review some related results in the literature. With a regular potential, including \eqref{regular}, and some more general reaction terms, problem \eqref{nsch}--\eqref{ini0} was first studied in \cite{LW}. In this work, the authors proved the existence of global weak solutions in both two and three dimensions, and the existence of a unique global strong solution in two dimensions. However, due to the coupling between $\varphi$ and $\sigma$, as well as the loss of maximum principle for the Cahn--Hilliard equation with regular potentials, they imposed certain technical assumptions on the coefficients to achieve their goals. Later, in \cite{H}, this restriction was removed by considering a singular potential like \eqref{pot}, which is more physically relevant. Using a semi-Galerkin approach, the author proved the existence of global weak solutions in both two and three dimensions, as well as the uniqueness of global weak solutions in two dimensions. For the same system, the authors of \cite{H1} further established the global strong well-posedness for arbitrarily large regular initial data in two dimensions. The arguments in \cite{H,H1} rely on the key property that the singular potential $\varPsi$ ensures that the phase function $\varphi$ remains within the physical interval $[-1,1]$ throughout the time evolution. Moreover, in the two-dimensional case, $\varphi$ is strictly separated from the pure states $\pm 1$ for $t>0$. This separation property plays a crucial role in studying the regularity and long-time behavior of solutions to the Cahn--Hilliard equation with a singular potential, see \cite{A2007,CMZ,GGM2017,Mi19,MZ04}. We also refer to \cite{A2009,Boyer,CG,Gio2021,GGW,GMT} for generalized systems with fluid coupling. Finally, we note that Oono's linear term represents possibly the simplest case of a (nonlocal) reaction. Our results provide answers to some open questions raised in \cite{MT} concerning the global regularity and asymptotic stabilization of global weak solutions to the NSCHO system with singular potential. On the other hand, more general reaction terms have been integrated into the Cahn--Hilliard equation for biological applications \cite{Fa15,Lam22}, as well as in extended systems for tumor modeling \cite{GL17,GL17e,GLSS,KS22}. Investigating the global well-posedness and long-time behavior of global weak/strong solutions to the hydrodynamic system \eqref{nsch}--\eqref{ini0} with these general reaction terms constitutes an interesting and challenging task.

\textit{Plan of the paper}. The remainder of this paper is organized as follows.  In Section \ref{pm}, we introduce the mathematical settings and state the main results. In Section \ref{ws} we analyze a convective Cahn--Hilliard--diffusion system with a prescribed divergence-free velocity, which may have independent interest. Section \ref{proof-main} is devoted to the proofs of the main results.

\section{Main Results}\label{pm}
\setcounter{equation}{0}
\subsection{Preliminaries}
First, we introduce some notation and basic tools that will be used throughout the paper.

Let $X$ be a real Banach space. Its dual space is denoted by $X'$, and the duality pairing is denoted by $\langle \cdot,\cdot\rangle_{X',X}$. The bold letter $\bm{X}$ denotes the generic space of vectors or matrices, with each component belonging to $X$.
We assume that $\Omega \subset\mathbb{R}^3$ is a bounded domain with a sufficiently smooth boundary $\partial\Omega$. For any $q \in [1,+\infty]$, $L^{q}(\Omega)$ denotes the Lebesgue space with norm $\|\cdot\|_{L^{q}}$. For $k\in \mathbb{Z}^+$, $q\in [1,+\infty]$, $W^{k,q}(\Omega)$ denotes the Sobolev space with norm $\|\cdot\|_{W^{k,q}}$. The Hilbert space $W^{k,2}(\Omega)$ is denoted by $H^{k}(\Omega)$ with the norm $\|\cdot\|_{H^{k}}$.
For $s\geq 0$ and $q\in [1,\infty )$,
we denote by $H^{s,q}(\Omega )$ the Bessel-potential spaces and by $W^{s,q}(\Omega )$ the Slobodeckij spaces. It holds $H^{s,2}(\Omega)=W^{s,2}(\Omega )$ for all $s$, but for $q\neq 2$ the identity $H^{s,q}(\Omega )=W^{s,q}(\Omega )$ is only true if $s\in \mathbb{N}$. For clarity, we denote by $\mathbb{N}$ the set of natural numbers including zero. If $s\in \mathbb{N}$, then $H^{s,q}(\Omega )$ and $W^{s,q}(\Omega )$ coincide with the usual Sobolev spaces.
For simplicity of notation, the norm and inner product on $L^{2}(\Omega)$ (as well as $\bm{L}^{2}(\Omega)$) are denoted by $\|\cdot\|$ and $(\cdot,\cdot)$, respectively. Given a measurable set $J\subset \mathbb{R}$, $L^q(J;X)$ with $q\in [1,+\infty]$ denotes the space of Bochner measurable $q$-integrable/essentially bounded functions with values in the Banach space $X$. If $J=(a, b)$ is an interval, we write, for simplicity, $L^q(a,b;X)$. Moreover, $f\in L^q_{\mathrm{loc}}(a,+\infty;X)$ if and only if $f\in L^q(a,b;X)$ for every $b>a$. The space $L^q_{\mathrm{uloc}}([a,+\infty);X)$ consists of all measurable functions $f:[a,+\infty)\to X$ such that
$$
\|f\|_{L^q_{\mathrm{uloc}}([a,+\infty);X)}=\sup_{t\geq a}\|f\|_{L^q(t,t+1;X)}<+\infty.
$$
For $q\in [1,+\infty]$, $W^{1,q}(J;X)$ denotes the space of functions $f$ such that $f\in L^q(J;X)$ with $\partial_t f\in L^q(J;X)$, where $\partial_t $ denotes the vector-valued distributional derivative of $f$. For $q=2$, we shall set $H^1(J;X)=W^{1,q}(J;X)$ as well as $H^1_{\mathrm{uloc}}([a,+\infty);X)=W^{1,q}_{\mathrm{uloc}}([a,+\infty);X)$.
The notation $C([a,b];X)$ denotes the Banach
space of all continuous functions $f:[a,b]\to X$ equipped with the supremum
norm. Furthermore, $BC([a,+\infty); X)$ denotes the Banach space of all bounded and continuous functions $f : [a,+\infty)\to X$ endowed with the supremum norm, while $BUC([a,+\infty); X)$ stands for the subspace of all bounded and uniformly continuous functions. In addition, we define $BC_w([a,+\infty); X)$ as the topological vector space of all bounded and weakly continuous functions $f : [a,+\infty)\to X$.

For every $f\in (H^1(\Omega))'$, we denote its generalized mean on $\Omega$ by
$\overline{f}=|\Omega|^{-1}\langle f,1\rangle_{(H^1)',H^1}$; if $f\in L^1(\Omega)$, then it holds $\overline{f}=|\Omega|^{-1}\int_\Omega f \,\mathrm{d}x$.
We denote by
$$
L^2_{0}(\Omega):=\big\{f\in L^2(\Omega)\ |\ \overline{f} =0\big\}
$$
the linear subspace of $L^2(\Omega)$ with zero mean. In addition, in view of the homogeneous Neumann boundary condition \eqref{boundary}, we introduce the space
$$
H^2_{N}(\Omega):= \big\{f\in H^2(\Omega)\ |\  \partial_{\bm{n}}f=0 \ \textrm{on}\  \partial \Omega\big\}.
$$
Then we denote by $\mathcal{A}_N\in \mathcal{L}(H^1(\Omega),(H^1(\Omega))')$ the realization of minus Laplacian $-\Delta$ subject to the homogeneous Neumann boundary condition such that
\begin{equation}\nonumber
	\langle \mathcal{A}_N u,v\rangle_{(H^1)',H^1} := \int_\Omega \nabla u\cdot \nabla v \, \mathrm{d}x,\quad \forall\, u,v\in H^1(\Omega).
\end{equation}
For the linear space $V_0= H^1(\Omega)\cap L_0^2(\Omega)$, we denote
$$
V_0^{-1}= \big\{ u \in (H^1(\Omega))'\ |\  \overline{u}=0 \big\}\subset (V_0)'.
$$
The restriction of $\mathcal{A}_N$ from $V_0$ onto $V_0^{-1}$ is an isomorphism. In particular, $\mathcal{A}_N$ is positively defined on $V_0$ and is self-adjoint. We denote its inverse map by $\mathcal{N} =\mathcal{A}_N^{-1}: V_0^{-1} \to V_0$. For every $f\in V_0^{-1}$, $u= \mathcal{N} f \in V_0$ is the unique weak solution of the Neumann problem
$$
\begin{cases}
	-\Delta u=f, \quad \text{in} \ \Omega,\\
	\partial_{\bm{n}} u=0, \quad \ \  \text{on}\ \partial \Omega.
\end{cases}
$$
For any $f\in V_0^{-1}$, we set $\|f\|_{V_0^{-1}}=\|\nabla \mathcal{N} f\|$.
It is well-known that $f \to \|f\|_{V_0^{-1}}$ and $
f \to\big(\|f-\overline{f}\|_{V_0^{-1}}^2+|\overline{f}|^2\big)^\frac12$ are equivalent norms on $V_0^{-1}$ and $(H^1(\Omega))'$,
respectively (see, e.g., \cite{MZ04}). From the Poincar\'{e}--Wirtinger inequality:
\begin{equation}
	\notag
	\|f-\overline{f}\|\leq C \|\nabla f\|,\qquad \forall\,
	f\in H^1(\Omega),
\end{equation}
where $C>0$ depends only on $\Omega$, we find that $f\to \|\nabla f\|$ and $f\to \big(\|\nabla f\|^2+|\overline{f}|^2\big)^\frac12$ are equivalent norms on $V_0$ and $H^1(\Omega)$, respectively.
Moreover, we report the following standard Hilbert interpolation inequality and elliptic estimates for the Neumann problem
\begin{align*}
	&\|f\|  \leq \|f\|_{V_0^{-1}}^{\frac12} \| \nabla f\|^{\frac12},
	\qquad\quad \forall\, f \in V_0, \\
	&\|\nabla \mathcal{N} f\|_{\bm{H}^{k}}  \leq C \|f\|_{H^{k-1}},
	\qquad \forall\, f\in H^{k-1}(\Omega)\cap L^2_0(\Omega),\quad k\in\mathbb{Z}^+.
\end{align*}

Next, we introduce the Hilbert spaces of solenoidal vector-valued functions (see, e.g., \cite{G,S}). We denote by $\bm{L}^2_{0,\mathrm{div}}(\Omega)$, $\bm{H}^1_{0,\mathrm{div}}(\Omega) $ the closure of   $$
C_{0,\mathrm{div}}^{\infty}(\Omega;\mathbb{R}^3)=\big\{\bm{f}\in C_0^{\infty}(\Omega;\mathbb{R}^3):\, \textrm{div}\bm{f}=0\big\}
$$
in $\bm{L}^2(\Omega)$ and $\bm{H}^1(\Omega)$, respectively. For $\bm{L}^2_{0,\mathrm{div}}(\Omega)$, we also use $(\cdot,\cdot)$ and $\|\cdot\|$ for its inner product and norm. For any function $\bm{f} \in \bm{L}^2(\Omega)$, the Helmholtz--Weyl decomposition holds (see  \cite[Chapter \uppercase\expandafter{\romannumeral3}]{G}):
\be
\bm{f}=\bm{f}_{0}+\nabla z,\quad\text{where}\ \ \bm{f}_{0} \in \bm{L}^2_{0,\mathrm{div}}(\Omega),\ z \in H^1(\Omega).\nonumber
\ee
As a consequence, we define the Leray projection $\bm{P}:\bm{L}^2(\Omega)\to \bm{L}^2_{0,\mathrm{div}}(\Omega)$ such that $\bm{P}(\bm{f})=\bm{f}_{0}$.
It holds $\|\bm{P}(\bm{f})\|\leq \|\bm{f}\|$ for all $\bm{f}\in \bm{L}^2(\Omega)$.
The space $\bm{H}^1_{0,\mathrm{div}}(\Omega)$ is equipped with the inner product $(\bm{u},\bm{v})_{\bm{H}^1_{0,\mathrm{div}}}:=(\nabla \bm{u},\nabla \bm{v})$ and the norm $\|\bm{u}\|_{\bm{H}^1_{0,\mathrm{div}}}=\|\nabla \bm{u}\|$.
From Korn's inequality
$$
\|\nabla \bm{u}\|\leq \sqrt{2}\|D\bm{u}\|,\quad \forall\, \bm{u}\in \bm{H}^1_{0,\mathrm{div}}(\Omega),
$$
we see that $\|D\bm{u}\|$ yields an equivalent norm for $\bm{H}^1_{0,\mathrm{div}}(\Omega)$. Next, we consider the Stokes operator $\bm{S}=\bm{P}(-\Delta)$ with $D(\bm{S})= \bm{H}^1_{0,\mathrm{div}}(\Omega)\cap\bm{H}^2(\Omega)$.
The space $D(\bm{S})$ is equipped with the inner product $(\bm{S}\bm{u},\bm{S}\bm{v})$ and the equivalent norm $\|\bm{S}\bm{u}\|$ (see, e.g., \cite[Chapter III]{S}).
For any $\bm{u}\in D(\bm{S})$ and $\bm{\zeta} \in \bm{H}^1_{0,\mathrm{div}}(\Omega)$, it holds
$(\bm{S}\bm{u},\bm{\zeta})=(\nabla \bm{u},\nabla\bm{\zeta})$. The operator $\bm{S}$ is a canonical isomorphism from $\bm{H}^1_{0,\mathrm{div}}(\Omega)$ to $(\bm{H}^1_{0,\mathrm{div}}(\Omega))'$ and we denote its inverse by $\bm{S}^{-1}$. For any $\bm{f}\in (\bm{H}^1_{0,\mathrm{div}}(\Omega))'$, there is a unique $\bm{u}=\bm{S}^{-1}\bm{f}\in\bm{H}^1_{0,\mathrm{div}}(\Omega)$ such that
$$
(\nabla\bm{S}^{-1}\bm{f},\nabla \bm{\zeta}) =\langle\bm{f},\bm{\zeta}\rangle_{(\bm{H}^1_{0,\mathrm{div}})', \bm{H}^1_{0,\mathrm{div}}},
\quad \forall\,\bm{\zeta} \in \bm{H}^1_{0,\mathrm{div}}(\Omega).
$$
Hence, $\|\nabla\bm{S}^{-1}\bm{f}\| =\langle\bm{f},\bm{S}^{-1}\bm{f}\rangle_{(\bm{H}^1_{0,\mathrm{div}})',\bm{H}^1_{0,\mathrm{div}}}^{\frac{1}{2}}$ yields an equivalent norm on $(\bm{H}^1_{0,\mathrm{div}}(\Omega))'$.

Let us recall the Ladyzhenskaya, Agmon and Gagliardo--Nirenberg inequalities in three dimensions
\begin{align*}
	&\|f\|_{L^3}\leq C\|f\|_{H^1}^\frac12\|f\|^\frac12,\qquad\quad\   \forall\,f\in H^1(\Omega),\\
	&\|f\|_{L^\infty}\leq  C\|f\|_{H^2}^\frac12\|f\|_{H^1}^\frac12,\qquad \ \ \forall\,f\in H^2(\Omega),\\
	&\|f\|_{W^{1,4}}\leq C\|f\|_{H^2}^\frac12\|f\|_{L^\infty}^\frac12,\qquad \forall\, f\in H^2(\Omega),
\end{align*}
where the constant $C>0$ depends only on $\Omega$.
We also recall the following regularity result for the Stokes operator (see e.g., \cite[Chapter III, Theorem 2.1.1]{S} and \cite[Lemma B.2]{GMT}):
\bl
Let $\Omega$ be a bounded domain of class $C^2$ in $\mathbb{R}^3$. For any $\bm{f} \in \bm{L}^2_{0,\mathrm{div}}(\Omega)$,
there exists a unique pair $(\bm{u},p)\in D(\bm{S})\times V_0$ such that $-\Delta \bm{u}+\nabla p=\bm{f}$ a.e. in $\Omega$, that is, $\bm{u}=\bm{S}^{-1}\bm{f}$. Moreover, it holds
$$
\|\bm{u}\|_{\bm{H}^2}+\|\nabla p\|\le C\|\bm{f}\|,
$$
where the constant $C>0$ only depends on $\Omega$.
\el

In subsequent sections, the symbols $C$, $C_i$ represent generic positive constants that can change from line to line. The specific dependence of these constants in terms of the data will be pointed out if necessary.

\subsection{Main results}
\noindent
Throughout the paper, we adopt the following hypotheses.
\begin{enumerate}
	\item[(H1)] The viscosity $\nu$ satisfies $\nu \in C^{2}(\mathbb{R})$ and
	\be
	\nu_{*} \leq \nu(r)\leq \nu^*,\quad |\nu'(r)|\leq \nu_{0},\quad|\nu''(r)|\leq \nu_{1},\quad \forall\, r \in \mathbb{R},\nonumber
	\ee
	where $\nu_{*}$, $\nu^*$, $\nu_{0}$ and $\nu_{1}$ are some positive constants.
	\item[(H2)] The singular potential $\varPsi$ belongs to the class of functions $C\big([-1,1]\big)\cap C^{3}\big((-1,1)\big)$ and can be decomposed into the following form
	\begin{equation}
	\varPsi(r)=\varPsi_{0}(r)-\frac{\theta_{0}}{2}r^2,\nonumber
	\end{equation}
	such that
	\begin{equation}
	\lim_{r\to \pm 1} \varPsi_{0}'(r)=\pm \infty\quad \text{and}\quad  \varPsi_{0}''(r)\ge \theta,\quad \forall\, r\in (-1,1),\nonumber
	\end{equation}
 with strictly positive constants $\theta_{0},\  \theta$ satisfying $\theta_{0}-\theta>0$.
 There exists some small $\epsilon_0\in(0,1)$ such that $\varPsi_{0}''$ is non-decreasing in $[1-\epsilon_0,1)$ and non-increasing in $(-1,-1+\epsilon_0]$.
	Besides, we make the extension $\varPsi_{0}(r)=+\infty$ for $r\notin[-1,1]$.
	\item[(H3)] The coefficients $\chi$, $c_0$,  $\alpha$, $\beta$ are  prescribed  constants that satisfy
	\be
	\chi \in \mathbb{R},\quad c_0\in(-1,1),\quad \alpha\geq 0,\quad \beta\in\mathbb{R}. \nonumber
	\ee
\end{enumerate}
\begin{remark}\rm
 As indicated in \cite[Remark 2.1]{GMT}, one can extend the linear viscosity function \eqref{vis} to $\mathbb{R}$ in such a way to comply \textrm{(H1)}. In fact, since the singular potential guarantees that the phase function satisfies $\varphi\in [-1,1]$, the value of $\nu$ outside of $[-1,1]$ is not important and can be chosen as in \textrm{(H1)}. It is straightforward to check that the logarithmic potential \eqref{pot} satisfies the assumption \textrm{(H2)}.
\end{remark}

We first report a result on the existence of global weak solutions to problem \eqref{nsch}--\eqref{ini0} on $[0,+\infty)$.
\bp[Existence of global weak solutions] \label{main}
Let $\Omega$ be a bounded domain in $\mathbb{R}^3$, with boundary $\partial \Omega$ of class $C^2$. Suppose that hypotheses (H1)--(H3) are satisfied. For any initial data $\bm {v}_{0} \in \bm {L}^2_{0,\mathrm{div}}(\Omega)$, $\varphi_{0}\in H^1(\Omega)\cap L^\infty(\Omega)$ with $\|  \varphi_{0} \|_{L^{\infty}} \le 1$, $|\overline{\varphi}_{0}|<1$ and $\sigma_{0}\in L^2(\Omega)$, the initial boundary value problem \eqref{nsch}--\eqref{ini0} admits a global weak solution $(\bm{v},\varphi,\mu,\sigma)$ on $[0,+\infty)$ satisfying
\begin{align}
&\bm{v} \in L^{\infty} (0,+\infty;\bm{L}^2_{0,\mathrm{div}}(\Omega)) \cap L^{2}(0,+\infty;\bm{H}^1_{0,\mathrm{div}}(\Omega))\cap  W^{1,\frac{4}{3}}_{\mathrm{uloc}}([0,+\infty);(\bm{H}^1_{0,\mathrm{div}}(\Omega))'),
\notag\\
&\varphi \in L^{\infty}(0,+\infty;H^1(\Omega))\cap L^{4}_{\mathrm{uloc}}([0,+\infty);H^2_{N}(\Omega))\cap L^2_{\mathrm{uloc}}([0,+\infty);W^{2,6}(\Omega)),
\notag \\
&\partial_t\varphi\in L^2(0,+\infty;(H^1(\Omega))'),
\quad \varPsi^{\prime}(\varphi) \in L_{\mathrm{uloc}}^{2}([0, +\infty); L^6(\Omega)),
\notag\\
&\mu \in L^{2}_{\mathrm{uloc}}([0,+\infty);H^1(\Omega)),\quad \nabla \mu \in   L^{2}(0,+\infty;\bm{L}^2(\Omega)),
\notag\\
&\varphi\in L^{\infty}(\Omega\times (0,+\infty))\ \textrm{with}\ \ |\varphi(x,t)|<1\ \ \textrm{a.e.\ in}\ \Omega\times(0,+\infty),
\notag\\
&\sigma  \in L^{\infty}(0,+\infty;L^2(\Omega))\cap L^{2}_{\mathrm{uloc}}([0,+\infty);H^1(\Omega)) \cap W^{1,\frac{4}{3}}_{\mathrm{uloc}}([0,+\infty);(H^1(\Omega))'),
\notag
\end{align}
such that for almost all $t\in (0,+\infty)$,
\begin{align}
&\left \langle\partial_t  \bm{ v},\bm{\zeta}\right \rangle_{(\bm{H}^1_{0,\mathrm{div}})',\bm{H}^1_{0,\mathrm{div}}}
+\big(( \bm{ v} \cdot \nabla)  \bm {v},\bm{ \zeta}\big)
+\big(  2\nu(\varphi) D\bm{v},D\bm{ \zeta}\big)
=\big((\mu+\chi \sigma)\nabla \varphi,\bm {\zeta}\big),
&\label{test3.c} \\
&\left \langle \partial_t \varphi,\xi\right \rangle_{(H^1)',H^1}
+({\bm{v} \cdot \nabla \varphi},\xi)=- (\nabla \mu,\nabla \xi)-\alpha(\overline{\varphi}-c_0,\xi),&\label{test1.a} \\
&\left \langle\partial_t \sigma,\xi\right \rangle_{(H^1)',H^1}
+({\bm{v} \cdot \nabla \sigma},\xi) + (\nabla \sigma,\nabla \xi)= \chi ( \nabla \varphi,\nabla \xi),& \label{test2.b}
\end{align}
hold for all $\bm {\zeta} \in \bm{H}^1_{0,\mathrm{div}}$ and $\xi \in H^1(\Omega)$, where
\begin{align}
& \mu=-\Delta \varphi+\varPsi'(\varphi)-\chi \sigma+\beta\mathcal{N}(\varphi-\overline{\varphi}), \quad
\text{a.e. in}\ \Omega\times (0,+\infty),
\label{test4.d}
\end{align}
as well as
$$
\bm {v}|_{t=0}=\bm{v}_{0},\quad \varphi|_{t=0}=\varphi_{0},\quad  \sigma|_{t=0}=\sigma_{0},\quad \text{a.e. in}\ \Omega.
$$
In addition, for the total energy
$$
E(\bm{v},\varphi, \sigma)= \int_{\Omega} \frac{1}{2}|\bm{v}|^2\, \mathrm{d}x+ \mathcal{F}(\varphi,\sigma),
$$
with $\mathcal{F}(\varphi,\sigma)$ given by \eqref{fe1}, the following energy inequality
\begin{align}
& E(\bm{v}(t_2),\varphi(t_2), \sigma(t_2))
+  \int_{t_1}^{t_2}\!\int_{\Omega} \left( 2\nu(\varphi)|D\bm{v}|^2 + |\nabla \mu|^2+|\nabla(\sigma-\chi\varphi)|^2\right) \mathrm{d}x\mathrm{d}s\nonumber\\
&\quad \leq E(\bm{v}(t_1),\varphi(t_1), \sigma(t_1)) -\alpha\int_{t_1}^{t_2}\!\int_\Omega (\overline{\varphi}-c_0)\mu\, \mathrm{d}x\mathrm{d}s,
\label{wBEL}
\end{align}
holds for all $t_2\in [t_1,+\infty)$ and almost all $t_1\in [0,+\infty)$ (including $t_1=0$).
\ep
\begin{remark}\rm \label{rem:weak-ini}
Applying a uniformly local variant of \cite[Theorem 4.10.2, Chapter III]{Amann} and \cite[Lemma 4.1]{A2009b}, we find
$$\bm{v} \in BC_w ([0,+\infty);\bm{L}^2_{0,\mathrm{div}}(\Omega)),\quad
\varphi \in BC_w([0,+\infty);H^1(\Omega)),\quad \sigma\in BC_w([0,+\infty);L^2(\Omega)),
$$
so that the initial data can be attained.
\end{remark}

Our aim is to show regularity and long-time behavior of global weak solutions of problem \eqref{nsch}--\eqref{ini0} obtained in Proposition \ref{main}. The results read as follows:

\begin{theorem}[Global regularity]
\label{2main}
Let $\Omega$ be a bounded domain in $\mathbb{R}^3$, with boundary $\partial \Omega$ of class $C^3$.
Suppose that hypotheses (H1)--(H3) are satisfied. Let $(\bm{v}, \varphi,\mu,\sigma)$ be a global weak solution to problem \eqref{nsch}--\eqref{ini0} given by Proposition \ref{main}. Then the following results hold:
	
	\emph{(1) Instantaneous regularity of} $(\varphi,\mu, \sigma)$: for any $\tau\in (0,1)$, we have
	$$
	\begin{aligned}
	& \varphi \in L^{\infty}(\tau, +\infty; W^{2, 6}(\Omega)), \quad \partial_t \varphi \in L^2_{\mathrm{uloc}}([\tau, +\infty); H^1(\Omega)), \\
	& \mu \in L^{\infty}(\tau, +\infty; H^1(\Omega)) \cap L_{\mathrm{uloc }}^2([\tau, +\infty); H^3(\Omega)), \\
    & \varPsi'(\varphi) \in L^{\infty}(\tau, +\infty; L^6(\Omega)),\\
	& \sigma\in L^{\infty}(\tau, +\infty; L^6(\Omega)),\quad \partial_t\sigma\in L^2(\tau,+\infty;(H^1(\Omega))'),
	\end{aligned}
	$$
The equations \eqref{f1.a}--\eqref{f4.d} are satisfied almost everywhere in $\Omega \times(\tau, +\infty)$ and the boundary condition $\partial_{\bm{n}}\mu=0$ holds almost everywhere on $\partial \Omega \times(\tau, +\infty)$.
	
	\emph{(2) Eventual separation property of} $\varphi$: there exist $T_{\mathrm{SP}}\geq 1$ and $\delta\in (0,1)$ such that
	$$
	|\varphi(x, t)| \leq 1-\delta, \quad \forall\,(x, t) \in \overline{\Omega }\times [T_{\mathrm{SP}}, +\infty).
	$$

	\emph{(3) Eventual regularity of} $(\bm{v},\sigma)$: there exist  $T_\mathrm{R}\geq T_{\mathrm{SP}}$ such that
	\begin{align*}
	& \bm{v} \in L^{\infty}(T_\mathrm{R}, +\infty; \bm{H}_{0,\mathrm{div}}^1(\Omega))
\cap L^2(T_\mathrm{R}, +\infty; \bm{H}^2(\Omega)),
\quad
\partial_t \bm{v}\in L^2(T_\mathrm{R}, +\infty; \bm{L}_{0,\mathrm{div}}^2(\Omega)),
\\
&\sigma \in L^\infty(T_\mathrm{R},+\infty; H^1(\Omega))\cap L^2_{\mathrm{uloc}}([T_\mathrm{R},+\infty); H^2_N(\Omega)),\quad
\partial_t\sigma \in L^2_{\mathrm{uloc}}([T_\mathrm{R},+\infty); L^2(\Omega)).
	\end{align*}
Moreover, we have $\varphi\in L^\infty(T_{\mathrm{R}},+\infty; H^3(\Omega))$.
\end{theorem}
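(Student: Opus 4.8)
The plan is to prove the three assertions in the order stated, exploiting the cascade structure whereby each regularization step unlocks the next. Throughout I would work from the uniform-in-local-time bounds furnished by Proposition \ref{main}, most importantly $\bm{v}\in L^\infty(0,+\infty;\bm{L}^2_{0,\mathrm{div}})\cap L^2(0,+\infty;\bm{H}^1_{0,\mathrm{div}})$, $\varphi\in L^\infty(0,+\infty;H^1)$ and the energy inequality \eqref{wBEL}. The guiding principle is that the velocity is needed only at the level of $L^2(\bm{H}^1)\hookrightarrow L^2(\bm{L}^6)$ for the first stage, so that $(\varphi,\sigma)$ may be regularized by treating $\bm{v}$ as a \emph{given} divergence-free field; this is precisely the content of the self-contained regularity theory for the convective Cahn--Hilliard--diffusion system developed in Section \ref{ws}, which I would invoke as a black box.

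First stage (assertion (1)). Rewrite \eqref{f4.d} as the elliptic problem $-\Delta\varphi+\varPsi_0'(\varphi)=\mu+\chi\sigma+\theta_0\varphi-\beta\mathcal{N}(\varphi-\overline{\varphi})$ with homogeneous Neumann data, and exploit the monotonicity $\varPsi_0''\ge\theta$ together with the singular structure of (H2). Testing the phase equation against $\partial_t$-type multipliers and against $\mathcal{A}_N\mu$, I would derive a differential inequality for $\|\nabla\mu\|^2$; multiplying by a time weight $t\wedge 1$ produces the \emph{instantaneous} gain $\mu\in L^\infty(\tau,+\infty;H^1)$ and $\partial_t\varphi\in L^2(\tau,+\infty;H^1)$ for every $\tau\in(0,1)$. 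The convection term $\bm{v}\cdot\nabla\varphi$ is absorbed using $\bm{v}\in L^2(\bm{L}^6)$ and interpolation of $\nabla\varphi$, while the chemotactic coupling $-\chi\sigma$ contributes a term controlled by the $L^\infty(L^2)$ bound on $\sigma$. Elliptic regularity then upgrades $\varphi$ to $L^\infty(\tau,+\infty;W^{2,6})$ and $\varPsi'(\varphi)$ to $L^\infty(\tau,+\infty;L^6)$, after which testing \eqref{f2.b} and treating the cross-diffusion $-\chi\Delta\varphi$ as a now-regular source yields only the \emph{partial} gain $\sigma\in L^\infty(\tau,+\infty;L^6)$ with $\partial_t\sigma\in L^2(\tau,+\infty;(H^1)')$; full $H^1$-regularity of $\sigma$ is deliberately postponed because it still requires more regularity of $\varphi$ than is available at this stage.

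Second stage (assertion (2)). With $\varphi\in L^\infty(\tau,+\infty;W^{2,6})\hookrightarrow L^\infty(\tau,+\infty;C(\overline{\Omega}))$ in hand, I would prove the \emph{eventual} strict separation. The long-time input is that $\overline{\varphi}(t)\to c_0$ by \eqref{mph2} and that \eqref{wBEL} forces $\int_t^{t+1}\|\nabla\mu\|^2\to 0$ as $t\to+\infty$, so that along large times $\mu$ is close to a spatial constant in $H^1$ and hence uniformly bounded in $L^\infty$ via the just-obtained regularity. Feeding a uniform $L^\infty$-bound on the right-hand side of the elliptic equation for $\varphi$ into a De Giorgi--type iteration (as in the three-dimensional separation results for the Cahn--Hilliard equation) yields $\delta\in(0,1)$ and $T_{\mathrm{SP}}\ge 1$ with $|\varphi|\le 1-\delta$ on $\overline{\Omega}\times[T_{\mathrm{SP}},+\infty)$. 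This stage is the principal obstacle: in three dimensions instantaneous separation is unavailable, and the chemotactic term $-\chi\sigma$ in $\mu$ couples the bound on $\varPsi_0'(\varphi)$ to the only partially regularized $\sigma$, so the iteration must be closed using only the information accumulated in the first stage.

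Third stage (assertion (3)). Past $T_{\mathrm{SP}}$ the potential acts as a smooth function with bounded derivatives on $[-1+\delta,1-\delta]$, which removes the singular obstruction in the velocity estimates. Since $\bm{v}\in L^2(0,+\infty;\bm{H}^1_{0,\mathrm{div}})$, there are arbitrarily large times at which $\|\nabla\bm{v}\|$ is small; at such a time the local strong-solution theory for the coupled system, together with the conditional uniqueness results, produces a strong solution on a forward interval, and the smallness of the dissipated energy in the tail allows this strong solution to be continued for all later times, giving $\bm{v}\in L^\infty(T_{\mathrm{R}},+\infty;\bm{H}^1_{0,\mathrm{div}})\cap L^2(T_{\mathrm{R}},+\infty;\bm{H}^2)$ and $\partial_t\bm{v}\in L^2(T_{\mathrm{R}},+\infty;\bm{L}^2_{0,\mathrm{div}})$. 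With $\bm{v}$ now regular I would revisit \eqref{f2.b}: the cross-diffusion source $\chi\Delta\varphi$ is controlled by the separated $\varphi$, upgrading $\sigma$ to $L^\infty(H^1)\cap L^2_{\mathrm{uloc}}(H^2_N)$ with $\partial_t\sigma\in L^2_{\mathrm{uloc}}(L^2)$; finally, reinserting this improved $\sigma$ into the chemical potential and applying elliptic regularity to \eqref{f4.d} once more enhances $\varphi$ to $L^\infty(T_{\mathrm{R}},+\infty;H^3)$, closing the cascade.
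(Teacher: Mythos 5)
Your stages (1) and (3) are essentially the paper's argument. For (1), the paper likewise treats $\bm{v}$ as given and delegates everything to Section \ref{ws}, but not via time weights: it picks a good time $\widetilde{\tau}\in(0,\tau)$ at which $\varphi(\widetilde{\tau})\in H^2_N(\Omega)$ and $\mu(\widetilde{\tau}),\sigma(\widetilde{\tau})\in H^1(\Omega)$ (possible since $\varphi\in L^4(0,1;H^2_N)$, $\mu,\sigma\in L^2(0,1;H^1)$), runs Theorem \ref{vch} forward from $\widetilde{\tau}$, and then -- this step is missing from your sketch of stage (1) -- identifies the resulting regular solution with the \emph{given} weak solution via the conditional uniqueness of Theorem \ref{vch-weak}-(2), applicable because the regular solution satisfies $\sigma\in L^\infty(L^6)\subset L^8(0,T;L^4)$. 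Your time-weight variant would in any case have to be executed inside an approximation scheme (testing by $\partial_t\mu$ is not admissible for a weak solution) and then transferred by the same conditional uniqueness, so the black-box invocation is the right move provided the identification is made explicit. Stage (3) matches the paper as well: local strong well-posedness from \cite{HW2022} at a small-energy time $T_{\mathrm{R}}\geq T_{\mathrm{SP}}$, weak--strong uniqueness for the full system (Proposition \ref{ls}, a relative energy argument that crucially uses the separation of \emph{both} solutions, hence depends on stage (2)), and global continuation via the small-data result for Navier--Stokes with variable viscosity (Lemma \ref{NSglo}) applied to the velocity equation with forcing $\bm{P}\big(-\varphi\nabla\mu-\chi\varphi\nabla(\sigma-\chi\varphi)\big)$, whose tail norm in $L^2(T_{\mathrm{R}},+\infty;\bm{L}^2)$ is small; the subsequent upgrades of $\sigma$ and $\varphi$ are as you describe.

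The genuine gap is stage (2). Your mechanism requires a uniform-in-time $L^\infty(\Omega)$ bound on the right-hand side of the elliptic equation $-\Delta\varphi+\varPsi_0'(\varphi)=\mu+\theta_0\varphi+\chi\sigma-\beta\mathcal{N}(\varphi-\overline{\varphi})$ for large times, and neither ingredient is available. From $\int_t^{t+1}\|\nabla\mu\|^2\,\mathrm{d}s\to 0$ together with $\mu\in L^\infty(\tau,+\infty;H^1)\cap L^2_{\mathrm{uloc}}(\tau,+\infty;H^3)$ one obtains, by interpolation, smallness of $\mu-\overline{\mu}$ only in time-integrated norms such as $L^2(t,t+1;H^2)$; since $H^1\not\hookrightarrow L^\infty$ in 3D, this does not yield $\sup_{t\geq T}\|\mu(t)\|_{L^\infty}<+\infty$. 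Worse, even granting such a bound, the term $\chi\sigma(t)$ lies only in $L^6(\Omega)$ after stage (1), so the source is at best $L^6$, giving $\varPsi_0'(\varphi(t))\in L^6$ and no pointwise control; a De Giorgi iteration cannot be closed with this data -- you correctly flag this as ``the principal obstacle'' but offer no mechanism to overcome it, so the proposal does not prove (2), and (3) collapses with it. The paper resolves this by a compactness mechanism you do not have: it studies the $\omega$-limit set (Proposition \ref{sta}), proves $\|\bm{v}(t)\|\to 0$ and $\|(\sigma-\chi\varphi)(t)-(\overline{\sigma_0}-\chi c_0)\|\to 0$, so that every limit point satisfies $\sigma_\infty=\chi\varphi_\infty+\mathrm{const}$ and thereby inherits $W^{2,6}$ regularity from $\varphi_\infty$ (curing exactly the missing boundedness of $\sigma$, but only in the limit); separation $\|\varphi_\infty\|_{C(\overline{\Omega})}\leq 1-\delta_0$ is then obtained for limit points by a dynamic argument in the spirit of \cite{MZ04} (viewing $(\varphi_\infty,\sigma_\infty)$ as a solution with $\bm{v}=\mathbf{0}$ and invoking \cite[Proposition 5.1]{H2}), with $\delta_0$ uniform by compactness of $\omega(\varphi)$ in $W^{2-\epsilon,6}(\Omega)\hookrightarrow C(\overline{\Omega})$, and finally transferred to $\varphi(t)$ for all $t\geq T_{\mathrm{SP}}$ via $\lim_{t\to+\infty}\mathrm{dist}_{W^{2-\epsilon,6}}(\varphi(t),\omega(\varphi))=0$. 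Some substitute for this step (or a genuinely new 3D separation argument tolerating $\sigma\in L^6$) is needed before your outline constitutes a proof.
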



\begin{theorem}[Convergence to a single equilibrium]\label{3main1}
Let the assumptions of Theorem \ref{2main} be satisfied.
Assume in addition, the potential function $\varPsi$ is real analytic in  $(-1,1)$. For every global weak solution $(\bm{v},\varphi,\sigma)$ to problem \eqref{nsch}--\eqref{ini0}, it holds
\begin{equation}
	\lim_{t\to +\infty} \big(\|\bm{v}(t)\|_{\bm{H}^1} +\|\varphi(t)-\varphi_\infty\|_{H^{3}} +\|\sigma(t)-\sigma_\infty\|_{H^{1}}\big) =0.
\notag
\end{equation}
Here, $(\varphi_{\infty},\sigma_{\infty})\in \big(H^3(\Omega)\cap H^2_N(\Omega)\big)\times H^2_N(\Omega)$ is a solution to the following stationary problem
\begin{subequations}
		\begin{alignat}{3}
		&-\Delta \varphi_\infty +\varPsi^{\prime}(\varphi_\infty) -\chi\sigma_\infty +\beta\mathcal{N}(\varphi_\infty-c_0) =\overline{\varPsi^{\prime}(\varphi_\infty)}
		-\chi\overline{\sigma_\infty},\quad
		&\ \text{in } \Omega,   \label{5bchv}\\
		& \Delta (\sigma_\infty-\chi\varphi_\infty)=0,
		&\ \text{in } \Omega,   \label{5f2.b}  \\
		&\partial_{\bm{n}} \varphi_\infty = \partial_{\bm{n}} \sigma_\infty=0,
		&\ \text{on } \partial \Omega, \label{5cchv}
		\end{alignat}
	\end{subequations}
subject to the mass constraints
$\overline{\sigma_\infty}=\overline{\sigma_0}$ and $\overline{\varphi_\infty}=c_0$ if $\alpha>0$, $\overline{\varphi_\infty}=\overline{\varphi_0}$ if $\alpha=0$.
Moreover, there exists some constant $\kappa\in(0,1/2)$ depending on $(\varphi_{\infty},\sigma_{\infty})$ such that
\begin{align}
	\|\bm{v}(t)\|+ \|\varphi(t)-\varphi_\infty\|_{H^1} +\|\sigma(t)-\sigma_\infty\| \le C(1+t)^{-\frac{\kappa}{1-2 \kappa}},\quad \forall\,t\geq 0,
\notag
\end{align}
where the constant $C>0$ depends on
$\|\bm{v}_0\|$, $\|\varphi_0\|_{H^1}$, $\|\varphi_\infty\|_{H^2}$, $\|\sigma_0\|$, $\|\sigma_\infty\|_{H^1}$, coefficients of the system and $\Omega$.
\end{theorem}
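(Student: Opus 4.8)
The plan is to reduce the problem to the convergence result for global strong solutions already established in \cite{HW2022}, using Theorem \ref{2main} as the bridge, and then to invoke the {\L}ojasiewicz--Simon machinery. By Theorem \ref{2main}(3), there exists $T_\mathrm{R}\geq 1$ after which the global weak solution enjoys the full regularity of a strong solution, namely $\bm{v}\in L^\infty(T_\mathrm{R},+\infty;\bm{H}^1_{0,\mathrm{div}})\cap L^2(T_\mathrm{R},+\infty;\bm{H}^2)$ with $\partial_t\bm{v}\in L^2(T_\mathrm{R},+\infty;\bm{L}^2_{0,\mathrm{div}})$, together with $\varphi\in L^\infty(T_\mathrm{R},+\infty;H^3)$ and $\sigma\in L^\infty(T_\mathrm{R},+\infty;H^1)\cap L^2_{\mathrm{uloc}}(T_\mathrm{R},+\infty;H^2_N)$. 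Moreover, Theorem \ref{2main}(2) provides the strict separation $|\varphi(x,t)|\leq 1-\delta$ on $\overline{\Omega}\times[T_\mathrm{SP},+\infty)$. Hence, on the half-line $[T_\mathrm{R},+\infty)$ the weak solution is in fact a strong solution of \eqref{nsch}--\eqref{ini0}, and the convergence analysis can be carried out exactly as in \cite{HW2022}.

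First I would establish precompactness of the trajectory. The regularity bounds above, combined with the Aubin--Lions--Simon compactness theorem, show that $\{(\bm{v}(t),\varphi(t),\sigma(t)):t\geq T_\mathrm{R}\}$ is relatively compact in $\bm{L}^2_{0,\mathrm{div}}\times H^2\times L^2$, so that the $\omega$-limit set is non-empty, compact and connected. Next, from the energy inequality \eqref{wBEL} together with the exponential decay \eqref{mph2} of $\overline{\varphi}-c_0$, the total dissipation $\int_{T_\mathrm{R}}^{+\infty}(\|D\bm{v}\|^2+\|\nabla\mu\|^2+\|\nabla(\sigma-\chi\varphi)\|^2)\,\mathrm{d}s$ is finite; consequently, along any sequence $t_n\to+\infty$ the dissipation vanishes and every cluster point is a steady state. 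This forces $\bm{v}_\infty=\mathbf{0}$ and identifies $(\varphi_\infty,\sigma_\infty)$ as a solution of the stationary system \eqref{5bchv}--\eqref{5cchv}, with the mass constraints inherited from \eqref{mph2}--\eqref{mph3}.

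The core of the argument is the {\L}ojasiewicz--Simon gradient inequality for the free energy $\mathcal{F}$. Because of the strict separation, $\varphi$ ranges in the compact interval $[-1+\delta,1-\delta]$ on which $\varPsi$ is real analytic, so the superposition operator $\varphi\mapsto\varPsi'(\varphi)$ is analytic and $\mathcal{F}$ is real analytic on a neighborhood of each equilibrium within the constraint manifold of prescribed means. Observing that the first variations of $\mathcal{F}$ in $\varphi$ and $\sigma$ are precisely $\mu$ and $\sigma-\chi\varphi$, whose spatial gradients drive the dissipation, this yields constants $\kappa\in(0,1/2)$, $C>0$ and a neighborhood of $(\varphi_\infty,\sigma_\infty)$ in which an inequality of the form $\|\nabla\mu\|+\|\nabla(\sigma-\chi\varphi)\|\geq C\,|\mathcal{F}(\varphi,\sigma)-\mathcal{F}(\varphi_\infty,\sigma_\infty)|^{1-\kappa}$ holds. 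Feeding this back into the energy dissipation identity produces a differential inequality for $(\mathcal{F}-\mathcal{F}_\infty)^\kappa$ whose integration shows that $\int_{T_\mathrm{R}}^{+\infty}(\|D\bm{v}\|+\|\nabla\mu\|+\|\nabla(\sigma-\chi\varphi)\|)\,\mathrm{d}s<+\infty$. This integrability upgrades subsequential convergence to convergence of the whole trajectory in the energy norm; a bootstrap using the uniform higher-order bounds from Theorem \ref{2main} then gives the stated convergence in $\bm{H}^1\times H^3\times H^1$. The explicit rate $(1+t)^{-\kappa/(1-2\kappa)}$ follows by solving the {\L}ojasiewicz differential inequality, and the statement on $[0,+\infty)$ is obtained by absorbing the bounded contribution on $[0,T_\mathrm{R}]$ into the constant $C$.

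The main obstacle, as in related hydrodynamic problems, is the coupling: the velocity equation is not a gradient flow, so $\bm{v}$ cannot be treated through the {\L}ojasiewicz--Simon inequality directly but must be controlled by the kinetic part of the dissipation and linked to the $(\varphi,\sigma)$-dynamics through the Korteweg force $(\mu+\chi\sigma)\nabla\varphi$. Establishing analyticity and the correct functional-analytic setting for the combined $(\varphi,\sigma)$ energy---accounting for the cross term $-\chi\sigma\varphi$, the nonlocal Oono term $\beta\mathcal{N}(\varphi-\overline{\varphi})$, and the off-critical forcing $\alpha(\overline{\varphi}-c_0)$, which is non-autonomous but exponentially small in view of \eqref{mph2}---is the delicate point. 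Fortunately, all of this has already been carried out in \cite{HW2022}, so here it suffices to verify that the regularized weak solution meets the hypotheses required there.
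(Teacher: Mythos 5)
Your proposal takes exactly the paper's route: the authors prove Theorem \ref{3main1} by using Theorem \ref{2main} to upgrade every global weak solution to a global strong one after a sufficiently large time (with uniform-in-time bounds in $\bm{H}^1_{0,\mathrm{div}}(\Omega)\times H^3(\Omega)\times H^1(\Omega)$ and the strict separation of $\varphi$), and then invoking the extended {\L}ojasiewicz--Simon inequality of \cite[Theorem 4.1]{HW2022} and repeating the convergence argument of \cite[Theorem 2.2]{HW2022}, whose details they omit. Your sketch of the {\L}ojasiewicz--Simon machinery, the identification of the $\omega$-limit with the stationary problem and $\bm{v}_\infty=\mathbf{0}$, the handling of the non-gradient velocity component and the exponentially small off-critical forcing, and the absorption of $[0,T_{\mathrm{R}}]$ into the constant for the rate estimate are all consistent with that reference, so the proposal is correct and essentially identical to the paper's proof.
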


\subsection{Proof strategies}
\label{sec:sPF}
We conclude this section by discussing the applied techniques and proof strategies.

\subsubsection{Existence of global weak solutions in $[0,+\infty)$}
Regarding Proposition \ref{main}, for an arbitrary but fixed final time $T\in (0,+\infty)$, the existence of a weak solution $(\bm{v}, \varphi, \mu, \sigma)$ to problem \eqref{nsch}--\eqref{ini0} on $[0,T]$ can be established through a suitable semi-Galerkin scheme and a compactness argument similar to \cite[Section 3]{H}, with minor modifications. However, owing to the lack of uniqueness for weak solutions in three dimensions, in order to obtain the global existence on the unbounded interval $[0,+\infty)$, one cannot simply extend the solution defined on $[0,T]$ by letting $T\to +\infty$. Instead,  we observe that the approximate solutions $(\bm{v}^m, \varphi^m, \mu^m,\sigma^m)$ constructed via the semi-Galerkin scheme and Schauder's fixed-point theorem as in \cite{H} are indeed unique ($m\in \mathbb{Z}^+$ being the parameter in the Galerkin approximation). Furthermore, these solutions satisfy the following mass relations and the energy equality on their corresponding existence interval $[0,T_m]$:
\begin{align}
\overline{\varphi^m}(t)=c_0+e^{-\alpha t}(\overline{\varphi_0}-c_0),
\quad \overline{\sigma^m}(t)=\overline{\sigma_0},
\quad \forall\, t\in [0,T_m],
\label{awmass}
\end{align}
and
\begin{align}
& E(\bm{v}^m(t_2),\varphi^m(t_2), \sigma^m(t_2)) +  \int_{t_1}^{t_2}\!\int_{\Omega} \left( 2\nu(\varphi^m)|D\bm{v}^m|^2 + |\nabla \mu^m|^2+|\nabla(\sigma^m-\chi\varphi^m)|^2\right) \mathrm{d}x\mathrm{d}s\nonumber\\
&\quad = E(\bm{v}^m(t_1),\varphi^m(t_1), \sigma^m(t_1)) -\alpha\int_{t_1}^{t_2}\!\int_\Omega (\overline{\varphi^m}-c_0)\mu^m\, \mathrm{d}x\mathrm{d}s,
\quad 0\leq t_1<t_2\leq T_m,
\label{awBEL}
\end{align}
where
$$
E(\bm{v}^m,\varphi^m, \sigma^m)= \int_{\Omega} \frac{1}{2}|\bm{v}^m|^2\, \mathrm{d}x+ \mathcal{F}(\varphi^m,\sigma^m)
$$
and $\mathcal{F}$ is defined as in \eqref{fe1}. If $\alpha=0$, the total energy decreases with respect to time.
If $\alpha>0$, applying the same argument for \cite[(3.20)]{HW2022}, we can control the reminder term on the right-hand side of \eqref{awBEL} by
\begin{align}
-\alpha \int_\Omega (\overline{\varphi^m}-c_0)\mu^m\, \mathrm{d}x
\leq \frac12 \big(\|\nabla \mu^m\|^2+\|\nabla(\sigma^m-\chi\varphi^m)\|^2\big)
+ C(1+\alpha^3) \alpha e^{-\alpha t}|\overline{\varphi_{0}}-c_0|,\notag
\end{align}
for almost all $t\in [0,T_m]$, where $C>0$ depends on $\overline{\varphi_0}$, $\overline{\sigma_0}$, $\Omega$ and the coefficients of the system except $\alpha$. We note that the first two terms on the right-hand side of the above inequality can be absorbed into the left-hand side of \eqref{awBEL} (after integration on $[t_1,t_2]$), while the last term belongs to $L^1(0,+\infty)$. Thus, based on \eqref{awmass} and \eqref{awBEL}, for every fixed $m\in \mathbb{Z}^+$, we can obtain uniform estimates with respect to time for the approximate solution $(\bm{v}^m, \varphi^m, \mu^m,\sigma^m)$ and extend it from the finite interval $[0,T_m]$ to $[0,+\infty)$.
Similar arguments can be found in the proof of Theorem \ref{vch-weak} below. 
On the other hand, since \eqref{awmass} and \eqref{awBEL} provide sufficient estimates for $(\bm{v}^m, \varphi^m, \mu^m,\sigma^m)$ that are uniform with respect to the approximating parameter $m$ (see also the lower-order energy estimates in \cite[Section 3.2]{HW2022}), we can employ a compactness argument and a diagonal extraction process analogous to that in \cite[Chapter V, Section 1.3.6]{Bo13} for the three-dimensional Navier--Stokes equations. This allows us to find a subsequence of $\big\{(\bm{v}^m, \varphi^m, \mu^m,\sigma^m)\big\}$ that converges, as $m\to +\infty$, to a weak solution of problem \eqref{nsch}--\eqref{ini0} on the interval $[0, N]$ for every $N\in \mathbb{Z}^+$.
Consequently, the limit of this convergent subsequence yields a global weak solution to problem \eqref{nsch}--\eqref{ini0} defined on $[0,+\infty)$, possessing the required regularity properties stated in Proposition \ref{main}.

From the mass relation \eqref{awmass} (i.e., $\overline{\varphi^m}$ is bounded and indeed independent of $m$) and the fact that for any $T\in (0,+\infty)$ the approximate chemical potential $\mu^m$ weakly converges (up to a subsequence) in $L^2(0,T; L^2(\Omega))$, we find that for all $0\leq t_1\leq t_2\leq T$, it holds
$$
\alpha\int_{t_1}^{t_2}\!\!\int_\Omega (\overline{\varphi^m}-c_0)\mu^m\, \mathrm{d}x\mathrm{d}s\ \to\ \alpha\int_{t_1}^{t_2}\!\!\int_\Omega (\overline{\varphi}-c_0)\mu\, \mathrm{d}x\mathrm{d}s,
$$
as $m\to +\infty$ (again up to a subsequence). This observation, combined with the energy equality \eqref{awBEL} for the approximate solutions $(\bm{v}^m, \varphi^m, \mu^m,\sigma^m)$, and a standard argument for the Navier--Stokes--Cahn--Hilliard system (see, e.g., \cite[Theorem 1]{A2009}) enables us to conclude the energy inequality \eqref{wBEL}.

Taking into account the above discussions, the details of the proof for Proposition \ref{main} are omitted in this paper.

\subsubsection{Global regularity}

The proof of Theorem \ref{2main} is inspired by the recent work \cite{A2022}. One key point therein is to establish full regularity for the solution of the Cahn--Hilliard equation with the logarithmic potential \eqref{pot} and a divergence-free drift, under the weak assumption
$\bm{v}\in L^2(0,+\infty;\bm{H}^1_{0,\mathrm{div}}(\Omega))$ (see \cite[Theorem 2.4]{A2022}). For our problem \eqref{nsch}--\eqref{ini0}, in order to address the coupling due to chemotaxis and mass transport, we need to investigate the regularity of weak solutions to a convective Cahn--Hilliard--diffusion system for $(\varphi,\mu,\sigma)$ with a prescribed divergence-free velocity $\bm{v}\in L^2(0,+\infty;\bm{H}^1_{0,\mathrm{div}}(\Omega))$, as seen in \eqref{1f1}--\eqref{ini01}. Detailed analysis will be presented in Section \ref{ws}. Here, new challenges emerge in the second-order parabolic equation for $\sigma$ due to the low regularity of $\bm{v}$ and the cross-diffusion structure. This leads to a limited regularity of $\sigma$, which subsequently hinders the attainment of a higher-order spatial regularity for $\varphi$ through \eqref{1f4.d}. We first prove the existence of global weak solutions to problem \eqref{1f1}--\eqref{ini01} along with a conditional uniqueness result, as outlined in Theorem \ref{vch-weak}. Subsequently, in Theorem \ref{vch}, we establish the existence and uniqueness of more regular solutions under stronger assumptions on the initial data, with $\bm{v}$ belonging to the Leray--Hopf class, i.e.,
$$
\bm{v}\in L^\infty(0,+\infty;\bm{L}^2_{0,\mathrm{div}}(\Omega))\cap L^2(0,+\infty;\bm{H}^1_{0,\mathrm{div}}(\Omega)),
$$
which coincides with the regularity of the global weak solution to the hydrodynamic problem \eqref{nsch}--\eqref{ini0}. To prove Theorem \ref{vch}, we study two decoupled auxiliary systems for $\sigma$ and $(\varphi, \mu)$, respectively, and then apply a bootstrap argument. These results imply the instantaneous regularization of global weak solutions $(\varphi, \mu, \sigma)$ to problem \eqref{nsch}--\eqref{ini0} after an arbitrary positive time. Although we can recover the full regularity of $\varphi$ as in \cite{A2022}, there is a loss of regularity for $\sigma$ (due to the low regularity of $\bm{v}$). For example, with the initial datum $\sigma_0\in H^1(\Omega)$, we only obtain $\sigma \in L^\infty(0,+\infty, L^6(\Omega))$ instead of $\sigma \in L^\infty(0,+\infty, H^1(\Omega))$.

Next, an essential step towards achieving the eventual regularity of $\bm{v}$ is to investigate the long-time behavior of global weak solutions through the $\omega$-limit set, see Proposition \ref{sta}. Specifically, we demonstrate the strict separation of $\varphi$ from the pure states $\pm 1$ after a sufficiently large time. The eventual separation property is crucial because it ensures that the singular potential $\varPsi$ and its derivatives become smooth and bounded afterwards. This property further enables us to derive a weak-strong uniqueness result for the full system \eqref{nsch}--\eqref{ini0} via the relative energy method, see Proposition \ref{ls}. By combining the result on local strong well-posedness obtained in \cite{HW2022}, we find that every weak solution transitions into a strong one for sufficiently large time. Subsequently, the global regularity of $\bm{v}$ is achieved by applying the theory of global well-posedness for the Navier--Stokes system with variable viscosity \cite{A2009}. At last, with the improved regularity of $\bm{v}$, we can show that the chemical density $\sigma$ will regularize again, in turn improving the spatial regularity of $\varphi$.

Details of the proof for Theorem \ref{2main} will be presented in Section \ref{proof-main}.

\subsubsection{Convergence to a single equilibrium}
Theorem \ref{2main} implies that every global weak solution $(\boldsymbol{v}, \varphi,\mu,\sigma)$ to problem \eqref{nsch}--\eqref{ini0} becomes a global strong one after a sufficiently large time, with uniform-in-time estimates in $\bm{H}^1_{0,\mathrm{div}}(\Omega)\times H^3(\Omega)\times H^1(\Omega)\times H^1(\Omega)$. Furthermore, under the additional assumption that $\varPsi$ is real analytic in  $(-1,1)$, we can apply an extended {\L}ojasiewicz--Simon type inequality (see \cite[Theorem 4.1]{HW2022}) to demonstrate that the solution $(\boldsymbol{v}, \varphi,\sigma)$ converges to a single equilibrium $(\mathbf{0}, \varphi_\infty,\sigma_\infty)$ as $t\to +\infty$. The proof of Theorem \ref{3main1} follows the same argument as that for \cite[Theorem 2.2]{HW2022}, so its details are omitted in this paper.

\section{A Cahn--Hilliard--Diffusion System with Divergence-free Drift}
\label{ws}
\setcounter{equation}{0}
An essential step to prove Theorem \ref{2main} is to show the instantaneous regularizing effect of $\varphi$ and $\sigma$. To this end, let us consider the following convective Cahn--Hilliard--diffusion system
\begin{subequations} \label{1f1}
	\begin{alignat}{3}
	&\partial_t \varphi+\bm{v} \cdot \nabla \varphi=\Delta \mu-\alpha(\overline{\varphi}-c_0),\label{1f1.a} \\
	&\mu=- \Delta \varphi+  \varPsi'(\varphi)-\chi \sigma+\beta\mathcal{N}(\varphi-\overline{\varphi}),\label{1f4.d}\\
	&\partial_t \sigma+\bm{v} \cdot \nabla \sigma= \Delta (\sigma-\chi\varphi), \label{1f2.b}
	\end{alignat}
\end{subequations}
in $\Omega \times (0,+\infty)$, subject to the boundary and initial conditions
\begin{alignat}{3}
&{\partial}_{\bm{n}}\varphi={\partial}_{\bm{n}}\mu={\partial}_{\bm{n}}\sigma=0,\qquad\qquad &\textrm{on}& \ \partial\Omega\times(0,+\infty),
\label{boundary1}
\\
& \varphi|_{t=0}=\varphi_{0}(x), \ \ \sigma|_{t=0}=\sigma_{0}(x), \qquad &\textrm{in}&\ \Omega.
\label{ini01}
\end{alignat}
In \eqref{1f1}, the prescribed divergence-free velocity field $\bm{v}$ is supposed to satisfy a mild assumption on its regularity, that is,
$$
\bm{v} \in L^\infty(0,+\infty;\bm{L}^2_{0,\mathrm{div}}(\Omega)) \cap L^2(0, +\infty; \bm{H}_{0, \mathrm{div}}^1(\Omega)).
$$

The first theorem establishes the existence and conditional uniqueness of global weak solutions to problem \eqref{1f1}--\eqref{ini01}.

\bt\label{vch-weak}
Let $\Omega$ be a bounded domain in $\mathbb{R}^3$, with boundary $\partial \Omega$ of class $C^2$. Suppose that hypotheses (H2)--(H3) are satisfied and
$$
\bm{v} \in L^2(0, +\infty; \bm{H}_{0, \mathrm{div}}^1(\Omega)).
$$
\begin{itemize}
\item[\emph{(1)}] \emph{Existence}. For any initial datum $\left(\varphi_{0},\sigma_0\right)$ satisfying
$\varphi_{0} \in H^{1}(\Omega)\cap L^\infty(\Omega)$ with $\|\varphi_0\|_{L^\infty}\leq 1$, $|\overline{\varphi_0}|<1$ and $\sigma_{0} \in L^2(\Omega)$, problem \eqref{1f1}--\eqref{ini01} admits a global weak solution $(\varphi, \mu, \sigma)$ such that
$$
\begin{aligned}
 & \varphi\in L^{\infty}(0, +\infty ; H^1(\Omega))\cap L^{4}_{\mathrm{uloc}}([0,+\infty);H^2_{N}(\Omega)) \cap L^{2}_{\mathrm{uloc}}([0,+\infty);W^{2,6}(\Omega)),
 \\
 & \partial_t \varphi \in L^2(0, +\infty ; (H^1(\Omega))'),\quad \varPsi'(\varphi) \in L^{2}_{\mathrm{uloc}}([0, +\infty); L^6(\Omega))
 \\
 & \varphi\in L^{\infty}(\Omega \times(0, +\infty))\ \text { with }\ |\varphi(x, t)|<1\ \text { a.e. in } \Omega \times(0, +\infty),
 \\
 & \mu\in L_{\mathrm{uloc}}^{2}([0, +\infty); H^1(\Omega)),\quad \nabla \mu \in L^2(0, +\infty; \bm{L}^2(\Omega)),
 \\
 & \sigma\in L^{\infty}(0, +\infty ; L^2(\Omega))\cap L_{\mathrm{uloc}}^2([0, +\infty); H^1(\Omega))\cap   W^{1,\frac{4}{3}}_{\mathrm{uloc}}([0, +\infty); (H^1(\Omega))'),\\
 &\nabla (\sigma-\chi\varphi)\in L^2(0,+\infty; \bm{L}^2(\Omega)).
\end{aligned}
$$
For almost all $t\in (0,+\infty)$, the solution $(\varphi, \mu, \sigma)$ satisfies
%
\begin{subequations}
			\begin{alignat}{3}
			&\left \langle \partial_t \varphi,\xi\right \rangle_{(H^1)',H^1}
+({\bm{v} \cdot \nabla \varphi},\xi)=- (\nabla \mu,\nabla \xi)-\alpha(\overline{\varphi}-c_0,\xi),&
\notag \\
			&\left \langle\partial_t \sigma,\xi\right \rangle_{(H^1)',H^1}
+ ( \bm{v}\cdot\nabla \sigma, \xi) + (\nabla \sigma,\nabla \xi)
= \chi ( \nabla \varphi,\nabla \xi),&
\notag
			\end{alignat}
\end{subequations}
for all $\xi \in H^1(\Omega)$, where
\begin{align}
& \mu=-\Delta \varphi+\varPsi'(\varphi)-\chi \sigma+\beta\mathcal{N}(\varphi-\overline{\varphi}), \quad
\text{a.e. in}\ \Omega\times (0,+\infty).
\notag
\end{align}
Moreover, the initial conditions are fulfilled
$$
 \varphi|_{t=0}=\varphi_{0},\quad  \sigma|_{t=0}=\sigma_{0},\quad \text{a.e. in}\ \Omega,
$$
and the solution $(\varphi, \sigma)$ satisfies the following energy inequality:
\begin{align}
\frac12\|\sigma(t)\|^2
+\int_0^t \|\nabla \sigma\|^2\,\mathrm{d}s
\leq \frac12\|\sigma_0\|^2
+ \chi\int_0^t\!\int_\Omega \nabla \sigma \cdot\nabla \varphi \,\mathrm{d}x\mathrm{d}s,
\quad \forall\, t >0.
\label{sig-ener-app1b}
\end{align}
\item[\emph{(2)}] \emph{Conditional uniqueness}. For any $T\in (0,+\infty)$, let $(\varphi_i,\mu_i,\sigma_i)$, $i=1,2$, be two weak solutions to problem \eqref{1f1}--\eqref{ini01} on $[0,T]$ corresponding to the same initial datum $(\varphi_0, \sigma_0)$ as described in \emph{(1)}.
Assume, in addition, $\bm{v}\in L^\infty(0,+\infty;\bm{L}^2_{0,\mathrm{div}}(\Omega))$ and $\sigma_2\in L^8(0,T;L^4(\Omega))$,
then
$$
(\varphi_1(t),\mu_1(t),\sigma_1(t))=(\varphi_2(t),\mu_2(t),\sigma_2(t)),\quad \forall\, t\in [0,T].
$$
\end{itemize}
\et
\begin{remark}\rm \label{rem:w-ini}
Similarly to Remark \ref{rem:weak-ini}, from the regularity of $(\varphi, \sigma)$ and using \cite[Theorem 4.10.2, Chapter III]{Amann}, \cite[Lemma 4.1]{A2009b}, we can conclude $\varphi \in BC_w([0,+\infty);H^1(\Omega))$ and $\sigma\in BC_w([0,+\infty);L^2(\Omega))$, so that the initial data can be achieved.
\end{remark}

\smallskip

Next, we show that, under additional assumptions on the domain and initial data, the weak solution becomes more regular and unique.
\bt\label{vch}
Let $\Omega$ be a bounded domain in $\mathbb{R}^3$, with boundary $\partial \Omega$ of class $C^3$. Suppose that hypotheses (H2)--(H3) are satisfied and
$$
\bm{v} \in L^\infty(0,+\infty;\bm{L}^2_{0,\mathrm{div}}(\Omega))\cap L^2(0, +\infty; \bm{H}_{0, \mathrm{div}}^1(\Omega)).
$$
For any initial datum $(\varphi_{0},\sigma_0)$ satisfying
$\varphi_{0} \in H^{2}_N(\Omega)$, $\|\varphi_0\|_{L^\infty}\leq 1$, $|\overline{\varphi_0}|<1$, $\mu_0 =-\Delta \varphi_0+ \varPsi'(\varphi_0)\in H^1(\Omega)$ and $\sigma_{0} \in H^{1}(\Omega)$, problem \eqref{1f1}--\eqref{ini01} admits a unique global solution $(\varphi, \mu, \sigma)$  such that
$$
\begin{aligned}
 & \varphi\in L^{\infty}(0, +\infty; W^{2, 6}(\Omega)\cap H^2_N(\Omega)),\\
 & \partial_t\varphi \in L^2_{\mathrm{uloc}}(0, +\infty; H^1(\Omega)) \cap L^\infty(0,+\infty;(H^1(\Omega))'),
 \\
 & \varphi\in L^{\infty}(\Omega \times(0, +\infty))\ \text { with }\ |\varphi(x, t)|<1\ \text { a.e. in } \Omega \times(0, +\infty),
 \\
 & \varPsi'(\varphi) \in L^{\infty}(0, +\infty; L^6(\Omega)),
 \\
 & \mu\in L^{\infty}(0, +\infty; H^1(\Omega)) \cap L_{\mathrm{uloc}}^2([0, +\infty); H^3(\Omega)\cap H^2_N(\Omega)),
 \\
 & \sigma\in L^{\infty}(0, +\infty ; L^6(\Omega))\cap L_{\mathrm{uloc}}^2([0, +\infty); H^1(\Omega))
 \cap H^1(0, +\infty ; (H^1(\Omega))').
\end{aligned}
$$
The solution $(\varphi, \mu, \sigma)$ satisfies
\begin{subequations}
			\begin{alignat}{3}
			& \partial_t \varphi+\bm{v} \cdot \nabla \varphi=\Delta \mu-\alpha(\overline{\varphi}-c_0),&
            \notag \\
			&  \mu=-\Delta \varphi+\varPsi'(\varphi)-\chi \sigma+\beta\mathcal{N}(\varphi-\overline{\varphi}),&
            \notag
			\end{alignat}
\end{subequations}
almost everywhere in $\Omega \times (0,+\infty)$ and
\begin{align}
&\left \langle\partial_t \sigma,\xi\right \rangle_{(H^1)',H^1}
+ ( \bm{v}\cdot\nabla \sigma, \xi) + (\nabla \sigma,\nabla \xi)
= \chi ( \nabla \varphi,\nabla \xi),
\notag
\end{align}
for almost all $t\in (0,+\infty)$ and all $\xi \in H^1(\Omega)$.
The initial condition \eqref{ini01} is satisfied almost everywhere in $\Omega$. In addition, the solution $(\varphi, \mu, \sigma)$ satisfies the energy equality
\begin{align}
& \mathcal{F}(\varphi(t_2), \sigma(t_2))
+  \int_{t_1}^{t_2}\!\!\int_{\Omega} \left(|\nabla \mu|^2+|\nabla(\sigma-\chi\varphi)|^2\right) \mathrm{d}x\mathrm{d}s
\nonumber\\
&\quad = \mathcal{F}(\varphi(t_1), \sigma(t_1))-\alpha\int_{t_1}^{t_2}\!\!\int_\Omega (\overline{\varphi}-c_0)\mu\, \mathrm{d}x \mathrm{d}s  + \int_{t_1}^{t_2}\!\!\int_\Omega  \big[\bm{v}\cdot \nabla (\mu +\chi \sigma)\big] \varphi\,\mathrm{d}x \mathrm{d}s,
\label{a-wBEL}
\end{align}
for all $0\leq t_1\leq t_2< +\infty$, where $\mathcal{F}(\varphi,\sigma)$ is given by \eqref{fe1}.
\et
\begin{remark} \rm
For the single Cahn--Hilliard equation with non-smooth potential and divergence-free drift, existence and uniqueness of global weak solutions have been shown in \cite[Theorem 6]{A2009} and regularity properties under the mild requirement on the velocity have been established in \cite[Theorem 2.4]{A2022}. Theorem \ref{vch-weak} and Theorem \ref{vch} generalize those previous results to the Cahn--Hilliard--diffusion system \eqref{1f1} with a cross-diffusion structure and a nonlocal interaction of Oono's type. It is worth mentioning that under the condition $\bm{v} \in L^\infty(0,+\infty;\bm{L}^2_{0,\mathrm{div}}(\Omega)) \cap L^2(0, +\infty; \bm{H}_{0, \mathrm{div}}^1(\Omega))$ we can still gain full regularity of $(\varphi, \mu)$, but there is a loss in regularity of $\sigma$.
\end{remark}

\subsection{Existence of weak solutions and conditional uniqueness}
We start by proving the existence of global weak solutions to problem \eqref{1f1}--\eqref{ini01} on $[0,+\infty)$.
\subsubsection{The approximate problem}
\label{sec:CHS-app-pro}
We first regularize the velocity field $\bm{v}\in L^2(0, +\infty; \bm{H}_{0,\mathrm{div}}^1(\Omega))$.
Since $C^\infty_0((0, +\infty);D(\bm{S}))$ is dense in $L^2(0, +\infty; \bm{H}_{0,\mathrm{div}}^1(\Omega))$, there exists a sequence $\big\{\boldsymbol{v}^k\big\}\subset C^\infty_0((0, +\infty);D(\bm{S}))$ such that
\begin{align}
 \lim_{k\to+\infty}\|\bm{v}^k-\bm{v}\|_{L^2(0,+\infty; \bm{H}_{0,\mathrm{div}}^1(\Omega))}=0.
 \label{vk-con}
\end{align}
As a result, $\|\bm{v}^k\|_{L^2(0,+\infty; \bm{H}_{0,\mathrm{div}}^1(\Omega))}\leq \|\bm{v}\|_{L^2(0,+\infty; \bm{H}_{0,\mathrm{div}}^1(\Omega))}+1$ for $k$ sufficiently large. For every $k\in \mathbb{Z}^+$, consider the following approximate system with the prescribed velocity field $\bm{v}^{k}$:
\begin{subequations} \label{1f1-app1}
	\begin{alignat}{3}
	&\partial_t \varphi^k + \bm{v}^k \cdot \nabla \varphi^k=\Delta \mu^k-\alpha(\overline{\varphi^k}-c_0),
\label{1f1.a-app1} \\
	&\mu^k=- \Delta \varphi^k+  \varPsi'(\varphi^k)-\chi \sigma^k +\beta\mathcal{N}(\varphi^k-\overline{\varphi^k}),
\label{1f4.d-app1} \\
	&\partial_t \sigma^k+\bm{v}^k \cdot \nabla \sigma^k= \Delta (\sigma^k-\chi\varphi^k),
\label{1f2.b-app1}
	\end{alignat}
\end{subequations}
in $\Omega \times (0,+\infty)$, subject to the boundary and initial conditions
\begin{alignat}{3}
&{\partial}_{\bm{n}}\varphi^k={\partial}_{\bm{n}}\mu^k ={\partial}_{\bm{n}}\sigma^k=0,\qquad\qquad &\textrm{on}& \   \partial\Omega\times(0,+\infty),
\label{boundary1-app1}
\\
& \varphi^k|_{t=0}=\varphi_{0}(x), \ \ \sigma^k|_{t=0}=\sigma_{0}(x), \qquad &\textrm{in}&\ \Omega.
\label{ini01-app1}
\end{alignat}
Since $\bm{v}^{k}$ is sufficiently regular, for arbitrary but fixed $T\in (0,+\infty)$, following the proof of \cite[Lemma 3.1]{H}, we can show that problem \eqref{1f1-app1}--\eqref{ini01-app1} admits a unique weak solution $(\varphi^k, \mu^k, \sigma^k)$ on $[0,T]$ satisfying
\begin{align}
&\varphi^{k} \in L^\infty(0,T;H^1(\Omega))\cap L^{2}(0,T;H^2_{N}(\Omega)) \cap H^{1}(0,T;(H^1(\Omega))'),
\notag \\
&\mu^{k} \in   L^{2}(0,T;H^1(\Omega)),
\notag \\
&\sigma^{k}   \in C([0,T];L^2(\Omega))\cap L^{2}(0,T;H^1(\Omega))
\cap H^1(0,T;(H^1(\Omega))'),
\notag \\
&\varphi^k\in   L^{\infty}(\Omega\times(0,T))\ \ \textrm{with}\ \ |\varphi^{k}(x,t)|<1\quad \textrm{a.e.\ in}\ \Omega\times(0,T),
\notag
\end{align}
and
\be
 \sup _{0\leq t \leq T}\|\varphi^k(t)\|_{L^{\infty}} \leq 1. \label{phi-app1}
\ee
Besides, under assumptions (H2), (H3), an argument similar to that in \cite[Section 3]{GGW} yields
$$
\varphi^k\in L^2(0, T; W^{2, 6}(\Omega))\cap L^4(0,T; H^2(\Omega)),\quad
\varPsi'(\varphi^k) \in L^{2}(0, T; L^6(\Omega)).
$$

\subsubsection{Uniform estimates}\label{uni-es-appW}
Next, we derive estimates for the approximate solutions $(\varphi^k, \mu^k, \sigma^k)$ that are uniform with respect to the parameter $k$ and the time. Without loss of generality, below we consider $T\geq 1$.
\medskip

\textbf{First estimate}.
Integrating \eqref{1f1.a-app1} over $\Omega$, exploiting \eqref{boundary1-app1}, the incompressibility condition and no-slip boundary condition of $\bm{v}^k$, we obtain
\be
\frac{\mathrm{d}}{\mathrm{d}t}\big(\overline{\varphi^k}(t)-c_0\big) +\alpha\big(\overline{\varphi^k}(t)-c_0\big)=0,
\notag
\ee
for almost all $t\in (0,T)$. This gives
\be
\overline{\varphi^k}(t)
= c_0 + e^{-\alpha t}\big(\overline{\varphi^k}(0)-c_0\big)
= c_0+e^{-\alpha t}\big(\overline{\varphi_{0}}-c_0\big),
\quad\forall\,t\in [0,T]. \label{aver-app1}
\ee
In a similar way, from \eqref{1f2.b-app1} we can derive the mass conservation property for $\sigma^k$:
\be
\overline{\sigma^k}(t)=\overline{\sigma^k}(0)=\overline{\sigma_0}, \quad\forall\,t\in [0,T].
\label{1aver-app1}
\ee

\textbf{Second estimate}.
Multiplying \eqref{1f1.a-app1} by $\mu^k$, \eqref{1f2.b-app1} by $\sigma^k-\chi\varphi^k$, respectively, integrating over $\Omega$ and adding the resultants together, from \eqref{1f4.d-app1} we infer that
\begin{align}
&\frac{\mathrm{d}}{\mathrm{d}t} \mathcal{F}^k(t)  +\mathcal{D}^k(t)\notag \\
&\quad =
-\alpha\big(\overline{\varphi^k}-c_0\big)\int_\Omega \mu^k\,\mathrm{d}x
-\int_\Omega (\bm{v}^k\cdot\nabla\varphi^k)\mu^k\,\mathrm{d}x
+\chi \int_\Omega (\bm{v}^k\cdot\nabla\sigma^k)\varphi^k\,\mathrm{d}x \notag\\
&\quad =: R_1+R_2+R_3.
\label{1menergy-app1}
\end{align}
for almost all $t\in (0,T)$, where
\begin{align}
\mathcal{F}^k(t)&= \int_\Omega \Big(\frac12 |\nabla \varphi^k(t)|^2+  \varPsi(\varphi^k(t))- \chi\sigma^k(t)\varphi^k(t) + \frac12 |\sigma^k(t)|^2 \Big)\,\mathrm{d}x \notag\\
&\quad +\frac{\beta}{2}\|\nabla\mathcal{N}\big(\varphi^k(t)-\overline{\varphi^k(t)}\big)\|^2,
\notag \\
\mathcal{D}^k(t)& =\|\nabla \mu^k(t)\|^2 +\|\nabla(\sigma^k(t)-\chi\varphi^k(t))\|^2.
\notag
\end{align}
Applying the argument in \cite[Section 3.2]{HW2022}, from (H2) and H\"{o}lder's inequality, the initial energy can be estimated as follows
\begin{align}
\mathcal{F}^k(0)&\leq  \frac{1}{2}\big(1+\|\varphi_{0}\|_{H^1}^2\big)
+ |\Omega|\max_{r\in[-1,1]}|\varPsi(r)|  + \frac12(1+\chi^2)\|\sigma_{0}\|^2+ C(|\beta|+1)|\Omega|,
\label{iniE1}
\end{align}
where $C>0$ depends only on $\Omega$.
On the other hand, it follows from the $L^\infty$-estimate \eqref{phi-app1} that
\begin{align}
\left|\int_\Omega \chi\sigma^k(t)\varphi^k(t)\, \mathrm{d}x\right|
\leq |\chi|\|\sigma^k(t)\|\|\varphi^k(t)\|
\leq \frac14\|\sigma^k(t)\|^2 + \chi^2|\Omega|,
\quad \forall\,t\in [0,T].
\label{couple1}
\end{align}
Thus, from (H2), \eqref{phi-app1} and \eqref{couple1}, we obtain the lower bound
\begin{align}
\mathcal{F}^k(t)&\geq  \frac{1}{2}\|\varphi^k(t)\|_{H^1}^2
+\frac{1}{4}\|\sigma^k(t)\|^2 - C_1,
\quad \forall\,t\in [0,T],
\label{low-bd1}
\end{align}
where $C_1>0$ depends only on $\chi$, $\beta$, and $\Omega$.

Concerning the first term on the right-hand side of \eqref{1menergy-app1}, if $\alpha=0$, we simply have $R_1=0$. If $\alpha>0$, $R_1$ can be estimated as in \cite[(3.20)]{HW2022} such that (with $\gamma=0$ therein)
\begin{align}
R_1
\leq \frac12 \big(\|\nabla \mu^k\|^2+\|\nabla(\sigma^k-\chi\varphi^k)\|^2\big)
+ C_2 (1+\alpha^3) \alpha e^{-\alpha t}|\overline{\varphi_{0}}-c_0|,
\label{R1es-app1}
\end{align}
where $C_2>0$ depends on $\overline{\varphi_0}$, $\overline{\sigma_0}$, $\Omega$ and the coefficients of the system except $\alpha$.
For the other two terms, using integration by parts and \eqref{phi-app1}, we get
\begin{align}
R_2& = \int_{\Omega} (\bm{v}^k \cdot \nabla \mu^k) \varphi^k\, \mathrm{d} x
 \leq \|\bm{v}^k \|\| \nabla \mu^k \| \| \varphi^k\|_{L^\infty}
 \notag\\
 &\leq  \|\bm{v}^k \|^2 + \frac14\|\nabla \mu^k\|^2,
\label{vph-app1}
\end{align}
and
\begin{align}
R_3 & =  \chi
\int_{\Omega} \big[\bm{v}^k \cdot \nabla   (\sigma^k-\chi \varphi^k)\big]  \varphi^k \mathrm{d} x
\notag\\
 &\leq   |\chi|
  \|\bm{v}^k \|\| \nabla   (\sigma^k-\chi \varphi^k)\| \| \varphi^k\|_{L^\infty}
 \notag\\
 &\leq   \chi^2 \|\bm{v}^k \|^2 + \frac14\| \nabla   (\sigma^k-\chi \varphi^k)\|^2.
\label{vphb-app1}
\end{align}
Hence, it follows from \eqref{1menergy-app1} and \eqref{R1es-app1}--\eqref{vphb-app1} that
\begin{align}
&\frac{\mathrm{d}}{\mathrm{d}t}\widetilde{\mathcal{F}}^k(t)
+\frac14\mathcal{D}^k(t)
\leq  (1+\chi^2) \|\bm{v}^k\|^2,
\notag
\end{align}
for almost all $t\in (0,T)$, where
\begin{align}
&\widetilde{\mathcal{F}}^k(t)=\mathcal{F}^k(t)
+ C_2 (1+\alpha^3) e^{-\alpha t}|\overline{\varphi_{0}}-c_0|.
\notag
\end{align}
Integrating in time on $[0,t]\subset[0,T]$, we obtain
\begin{align}
\widetilde{\mathcal{F}}^k(t)
 + \frac14\int_0^t\mathcal{D}^k(s)\,\mathrm{d}s
 \leq  \widetilde{\mathcal{F}}^k(0)+ (1+\chi^2) \int_0^t \|\bm{v}^k(s)\|^2\,\mathrm{d}s, \quad \forall\, t\in[0,T],
\notag
\end{align}
which together with \eqref{iniE1} and \eqref{low-bd1} yields
\begin{align}
&\|\varphi^k(t)\|_{H^1}^2+\|\sigma^k(t)\|^2  +\int_{0}^{t}\big(\|\nabla \mu^k(s)\|^2+ \|\nabla (\sigma^k-\chi\varphi^k)(s)\|^2\big)\,  \mathrm{d} s \notag \\
& \quad\le C + 4(1+\chi^2) \int_0^t \|\bm{v}^k(s)\|^2\,\mathrm{d}s,\quad \forall\, t\in [0,T],
\label{low-es2-app1}
\end{align}
where $C>0$ depends on $\overline{\varphi_{0}}$, $\overline{\sigma_{0}}$, $\max_{r\in[-1,1]}|\varPsi(r)|$, $\|\varphi_{0}\|_{H^1}$, $\|\sigma_{0}\|$, the coefficients of the system, $\Omega$, but it is independent of $k$ and $T$. Similarly to the estimates \cite[(3.15), (3.16)]{HW2022} (with $\gamma=0$ therein), we can control the integration of the chemical potential by
\begin{align}
\left|\int_\Omega \mu^k\,\mathrm{d}x\right|
& \le  C\big(\|\nabla \mu^k\| + \|\sigma^k- \overline{\sigma^k}\|+ \|\varphi^k- \overline{\varphi^k}\|\big)\|\varphi^k-\overline{\varphi^k}\|+C
\notag\\
& \le  C\big(\|\nabla \mu^k\| +\|\nabla(\sigma^k-\chi\varphi^k)\| +\|\nabla \varphi^k\|\big)\|\nabla \varphi^k\| + C,
	\label{q2}
\end{align}
where $C>0$ depends on the coefficients of the system, $\overline{\varphi_0}$, $\overline{\sigma_0}$, and $\Omega$. Then it follows from \eqref{low-es2-app1}, \eqref{q2} and the Poincar\'{e}--Wirtinger inequality that
\begin{align}
\sup_{t\in [0,T-1]}\int_{t}^{t+1} \| \mu^k(s)\|^{2}_{H^1} \, \mathrm{d} s \leq C\big(1+\|\bm{v}^k\|_{L^2(0,T;\bm{L}^2(\Omega))}^4\big),
\label{low-es3-app1a}\\
\sup_{t\in [0,T-1]}\int_{t}^{t+1} \|\sigma^k(s)\|_{H^1}^2 \, \mathrm{d} s \leq C\big(1+\|\bm{v}^k\|_{L^2(0,T;\bm{L}^2(\Omega))}^2\big),
\label{low-es3-app1b}
\end{align}
where the constant $C>0$ is independent of $k$, $T$ and $t$. In addition, using the interpolation $\|f\|_{L^3}\leq C\|f\|_{H^1}^\frac12\|f\|^\frac12$, we infer from \eqref{low-es2-app1} and \eqref{low-es3-app1b} that
\begin{align}
\sup_{t\in [0,T-1]}\int_{t}^{t+1} \|\sigma^k(s)\|_{L^3}^4 \, \mathrm{d} s \leq C\big(1+\|\bm{v}^k\|_{L^2(0,T;\bm{L}^2(\Omega))}^4\big),
\label{low-es3-app1c}
\end{align}

\textbf{Third estimate}. Consider the elliptic problem for $\varphi^k$:
\begin{align}
\begin{cases}
- \Delta \varphi^k +  \varPsi_0'(\varphi^k) = \mu^k + \theta_0 \varphi^k +\chi \sigma^k -\beta\mathcal{N}(\varphi^k-\overline{\varphi^k}),\quad \text{in}\ \Omega\times (0,T),\\
\partial_{\bm{n}}\varphi^k=0,\qquad \qquad\qquad\qquad\qquad\qquad \qquad\qquad\qquad \quad \ \ \text{on}\ \partial \Omega\times (0,T).
\end{cases}
\notag
\end{align}
It follows from \cite[Lemma 7.3]{GGW}, \eqref{low-es2-app1} and \eqref{low-es3-app1b} that
\begin{align}
\sup_{t\in [0,T-1]}\int_t^{t+1} \|\varphi^k(s)\|_{H^{2}}^4\,\mathrm{d}s
\leq C\big(1+\|\bm{v}^k\|_{L^2(0,T;\bm{L}^2(\Omega))}^4\big).
\label{L4H2-app1}
\end{align}
Moreover, applying \cite[Lemma 7.4]{GGW}, we can deduce from \eqref{low-es2-app1}, \eqref{low-es3-app1a} and \eqref{low-es3-app1b} that
\begin{align}
& \sup_{t\in [0,T-1]}\int_t^{t+1} \|\varphi^k(s)\|_{W^{2,6}}^2\,\mathrm{d}s
+ \sup_{t\in [0,T-1]}\int_t^{t+1} \|\varPsi'_0(\varphi^k(s))\|_{L^{6}}^2\,\mathrm{d}s
\notag \\
&\qquad \leq C\big(1+\|\bm{v}^k\|_{L^2(0,T;\bm{L}^2(\Omega))}^4\big).
\label{L6-app1}
\end{align}
The positive constant $C$ in \eqref{L4H2-app1} and \eqref{L6-app1} is independent of $k$, $T$ and $t$.
\medskip

\textbf{Fourth estimate}. Concerning the time derivative $\partial_t\varphi^k$, we infer from \eqref{1f1.a-app1}, \eqref{phi-app1}, \eqref{aver-app1} and \eqref{low-es2-app1} that
\begin{align}
\int_0^{T} \|\partial_t\varphi^k(s)\|_{(H^1)'}^2\,\mathrm{d}s
&  \leq C \int_0^{T} \big(\|\bm{v}^k(s)\|^2\| \varphi^k(s)\|_{L^\infty}^2
+\|\nabla \mu^k(s)\|^2 + \alpha^2\|\overline{\varphi^k}(s)-c_0\|^2\big)\,\mathrm{d}s
\notag\\
&  \leq  C \big(1+ \|\bm{v}^k\|_{L^2(0,T;\bm{L}^2(\Omega))}^2\big),
\notag
\end{align}
where $C>0$ is independent of $k$ and $T$.
Similarly, using \eqref{1f2.b-app1}, \eqref{low-es2-app1} and \eqref{low-es3-app1c},
we can obtain the following estimate for $\partial_t\sigma^k$:
\begin{align}
 & \int_t^{t+1} \|\partial_t\sigma^k(s)\|_{(H^1)'}^\frac{4}{3}\,\mathrm{d}s \notag \\
 & \quad  \leq C \int_t^{t+1} \big(\|\bm{v}^k(s)\|_{\bm{L}^6}^\frac43 \| \sigma^k(s)\|_{L^3}^\frac43
+\|\nabla (\sigma^k-\chi\varphi^k)(s)\|^\frac{4}{3}\big) \,\mathrm{d}s \notag\\
& \quad \leq  C\left(\int_0^T  \|\bm{v}^k(s)\|_{\bm{H}^1}^2\,\mathrm{d}s \right)^\frac23
\left(\int_t^{t+1}  \|\sigma^k(s)\|_{L^3}^4\,\mathrm{d}s \right)^\frac13
+  \left(\int_t^{t+1} \|\nabla (\sigma^k-\chi\varphi^k)(s)\|^2 \,\mathrm{d}s\right)^\frac23
\notag\\
&\quad \leq C\big(1+\|\bm{v}^k\|_{L^2(0,T;\bm{L}^2(\Omega))}^2\big) \|\bm{v}^k\|_{L^2(0,T;\bm{H}^1(\Omega))}^\frac43
+ C\big(1+\|\bm{v}^k\|_{L^2(0,T;\bm{L}^2(\Omega))}^2\big),
\notag
\end{align}
for all $t\in [0,T-1]$, where $C>0$ is independent of $k$, $T$ and $t$.

\subsubsection{Existence}
\begin{proof}[\textbf{Proof of Theorem \ref{vch-weak}-(1)}]
For any $k\in \mathbb{Z}^+$, due to the arbitrariness of $T>0$ and the uniqueness of the approximate solution $(\varphi^k, \mu^k, \sigma^k)$, we can extend $(\varphi^k, \mu^k, \sigma^k)$ from $[0,T]$ to the entire interval $[0,+\infty)$. To construct a global weak solution to the original problem \eqref{1f1}--\eqref{ini01} on $[0,+\infty)$, we apply the standard compactness argument combined with a diagonal extraction process as in \cite[Chapter V, Section 1.3.6]{Bo13} for the three-dimensional Navier--Stokes equations.

In what follows, the convergence is always understood in the sense of a suitable subsequence (not relabeled for simplicity). On the time interval $[0,1]$, from the uniform estimates obtained above, we can find a triple $(\varphi_{(1)},\mu_{(1)},\sigma_{(1)})$ such that
\begin{align*}
&\varphi^k\to \varphi_{(1)}\quad &&\text{weakly star in}\ L^\infty(0,1;H^1(\Omega))\cap L^\infty(\Omega\times(0,1)),
\\
&\varphi^k\to \varphi_{(1)} \quad &&\text{weakly in}\ L^4(0,1;H^2_N(\Omega))\cap L^2(0,1;W^{2,6}(\Omega)),
\\
&\partial_t\varphi^k\to \partial_t \varphi_{(1)} \quad &&\text{weakly in}\
L^2(0,1;(H^1(\Omega))'),
\\
&\mu^k\to \mu_{(1)}  \quad &&\text{weakly in}\ L^2(0,1;H^1(\Omega)),
\\
&\sigma^k\to \sigma_{(1)}\quad &&\text{weakly star in}\ L^\infty(0,1;L^2(\Omega)),
\\
&\sigma^k\to \sigma_{(1)}\quad &&\text{weakly in}\ L^2(0,1;H^1(\Omega))\cap L^4(0,1;L^3(\Omega)),
\\
&\partial_t\sigma^k\to \partial_t\sigma_{(1)} \quad &&\text{weakly in}\
L^\frac43(0,1;(H^1(\Omega))').
\end{align*}
By the Aubin--Lions compactness lemma, we have
\begin{align*}
&\varphi^k\to \varphi_{(1)} \quad && \text{strongly in}\ L^\infty(0,1;H^{1-r}(\Omega))\cap L^4(0,1;H^{2-r}(\Omega)),
\\
&\sigma^k\to \sigma_{(1)} \quad && \text{strongly in}\ L^2(0,1;H^{1-r}(\Omega)),
\end{align*}
for any $r\in (0,1/2)$, which further implies
$$
\varphi^k\to \varphi_{(1)},\quad \sigma^k\to \sigma_{(1)} \quad \text{almost everywhere in}\ \Omega\times(0,1).
$$
The convergence of singular nonlinear terms $\big\{\varPsi'(\varphi^k)\big\}$ can be achieved using the argument in \cite[Section 2]{A2022}. We sketch it here for the convenience of the readers.
From the pointwise convergence of $\varphi^k$ and the fact that $|\varphi^k(x,t)|<1$ almost everywhere in $\Omega\times (0,1)$, we find $\varphi_{(1)}\in L^\infty(\Omega\times (0,1))$ with $|\varphi_{(1)}(x,t)|\leq 1$ almost everywhere in $\Omega\times (0,1)$.
Due to (H2), i.e., the continuity of $\varPsi'$ in $(-1,1)$, we see that $\varPsi'(\varphi^k)\to \widetilde{\varPsi}'(\varphi_{(1)})$ almost everywhere in $\Omega\times (0,1)$, where
$\widetilde{\varPsi}'(s)=\varPsi'(s)$ if $s\in (-1,1)$ and $\widetilde{\varPsi}'(\pm 1)=\pm \infty$. Hence, an application of Fatou's lemma yields
$$\int_{\Omega\times(0,1)} |\widetilde{\varPsi}'(\varphi_{(1)})|^2\,\mathrm{d}x\mathrm{d}t\leq \liminf_{k\to +\infty} \int_{\Omega\times(0,1)} |\varPsi'(\varphi^k)|^2\,\mathrm{d}x\mathrm{d}t<+\infty,$$
so that $\widetilde{\varPsi}'(\varphi_{(1)})\in L^2(\Omega\times(0,1))$, which implies   $|\varphi_{(1)}(x,t)|< 1$ almost everywhere in $\Omega\times (0,1)$. As a result, $\widetilde{\varPsi}'(\varphi_{(1)})=\varPsi'(\varphi_{(1)})$ holds almost everywhere in $\Omega\times (0,1)$. This observation, together with the uniform boundedness of $\big\{\varPsi'(\varphi^k)\big\}$ in $L^2(0,1;L^6(\Omega))$ implies that
\begin{align*}
& \varPsi'(\varphi^k)\to \varPsi'(\varphi_{(1)})\quad \text{weakly in}\ L^2(0,1;L^6(\Omega)).
\end{align*}
Next, from the strong convergence \eqref{vk-con} for $\bm{v}^k$, the weak/strong convergence properties of $\sigma^k$ and the incompressibility condition for $\bm{v}^k$, $\bm{v}$, we find
\begin{align*}
&\left|\int_0^1\!\! \int_\Omega (\bm{v}^k\cdot\nabla \sigma^k-\bm{v}\cdot\nabla \sigma_{(1)})\xi\,\mathrm{d}x\mathrm{d}t\right|
\\
&\quad \leq \left| \int_0^1\!\! \int_\Omega \sigma^k  (\bm{v}^k-\bm{v})\cdot\nabla \xi\,\mathrm{d}x\mathrm{d}t\right|
+ \left|\int_0^1\!\! \int_\Omega (\sigma^k-\sigma_{(1)}) \bm{v}\cdot\nabla \xi\,\mathrm{d}x\mathrm{d}t\right|
\\
&\quad \leq \|\sigma^k\|_{L^\infty(0,1; L^2(\Omega))}^\frac12\|\sigma^k\|_{L^2(0,1;H^1(\Omega))}^\frac12\|\bm{v}^k-\bm{v}\|_{L^2(0,1;\bm{L}^6(\Omega))}\|\nabla \xi\|_{L^4(0,1;\bm{L}^2(\Omega))}
\\
&\qquad + \left|\int_0^1\!\! \int_\Omega (\sigma^k-\sigma_{(1)}) \bm{v}\cdot\nabla \xi\,\mathrm{d}x\mathrm{d}t\right|
\\
&\quad \to 0,\quad \text{as}\ k\to +\infty,
\end{align*}
for any $\xi \in L^4(0,1;H^1(\Omega))$. Here, for the convergence of the second integral we have used the weak convergence $\sigma^k\to \sigma_{(1)}$ in $L^4(0,1;L^3(\Omega))$ and the fact $\bm{v}\cdot\nabla \xi \in L^{\frac43}(0,1;L^\frac{3}{2}(\Omega))$. Similarly, one can check that
\begin{align*}
&\left|\int_0^1\!\! \int_\Omega (\bm{v}^k\cdot\nabla \varphi^k-\bm{v}\cdot\nabla \varphi_{(1)})\zeta\,\mathrm{d}x\mathrm{d}t\right|
\to 0,\quad \text{as}\ k\to +\infty,
\end{align*}
for any $\zeta \in L^2(0,1;H^1(\Omega))$.
Hence, we are able to pass the limit as $k\to +\infty$ in the weak formulation for $(\varphi^k, \mu^k, \sigma^k)$ and conclude that the limit triple $(\varphi_{(1)}, \mu_{(1)}, \sigma_{(1)})$ is a weak solution to problem \eqref{1f1}--\eqref{ini01} on the time interval $[0,1]$ with expected regularity properties. Verification of the initial conditions is straightforward (cf. also Remark \ref{rem:w-ini}).

Taking the above subsequence $\big\{(\varphi^k, \mu^k, \sigma^k)\big\}$ that is convergent on $[0,1]$, using now the uniform estimates on the time interval $[0,2]$, we can exact a further convergent subsequence with the limit $(\varphi_{(2)}, \mu_{(2)}, \sigma_{(2)})$, which yields a weak solution to problem \eqref{1f1}--\eqref{ini01} on $[0,2]$. From the construction, we find that
$$
(\varphi_{(2)}, \mu_{(2)}, \sigma_{(2)})=(\varphi_{(1)}, \mu_{(1)}, \sigma_{(1)})\quad \text{on}\ \ [0,1].
$$
By successive extractions, for any $N\in \mathbb{Z}^+$,  we are able to obtain a weak solution $(\varphi_{(N)}, \mu_{(N)}, \sigma_{(N)})$ to problem \eqref{1f1}--\eqref{ini01} on $[0,N]$ such that $$
(\varphi_{(N+1)}, \mu_{(N+1)}, \sigma_{(N+1)})=(\varphi_{(N)}, \mu_{(N)}, \sigma_{(N)})\quad \text{on}\ \ [0,N].
$$
Hence, applying the diagonal extraction process as in \cite{Bo13}, we can find a subsequence $\big\{(\varphi^k, \mu^k, \sigma^k)\big\}$ that converges towards a weak solution of problem \eqref{1f1}--\eqref{ini01} on any interval $[0, T]$, the limit of this subsequence is indeed a global weak solution defined on $[0,+\infty)$.

Let $\big\{(\varphi^k, \mu^k, \sigma^k)\big\}$ be the convergent subsequence constructed above.
We observed that $\sigma^k$ can be used as a test function in its weak formulation such that
\begin{align}
\frac12 \frac{\mathrm{d}}{\mathrm{d}t} \|\sigma^k\|^2
+\|\nabla \sigma^k\|^2 = \chi\int_\Omega \nabla \sigma^k\cdot\nabla \varphi^k\,\mathrm{d}x,
\notag
\end{align}
for almost all $t>0$. Integrating in time yields the energy equality
\begin{align}
\frac12\|\sigma^k(t)\|^2+\int_0^t \|\nabla \sigma^k(s)\|^2\,\mathrm{d}s
= \frac12\|\sigma_0\|^2+ \chi\int_0^t\! \int_\Omega \nabla \sigma^k\cdot\nabla \varphi^k\,\mathrm{d}x\mathrm{d}s,
\quad \forall\, t>0.
\notag
\end{align}
Applying the standard argument in \cite[Chapter V, Section 1.4]{Bo13} and passing to the limit as $k\to +\infty$, we can conclude the energy inequality \eqref{sig-ener-app1b}. This completes the proof of Theorem \ref{vch-weak}-(1).
\end{proof}

\subsubsection{Conditional uniqueness}
Unlike the single convective Cahn--Hilliard equation studied in \cite{A2009,A2022}, the uniqueness of weak solutions to problem \eqref{1f1}--\eqref{ini01} in three dimensions is not known due to the low regularity of $\bm{v}$ and $\sigma$. However, with the help of the energy inequality \eqref{sig-ener-app1b}, we are able to establish a result about conditional uniqueness.%

\begin{proof}[\textbf{Proof of Theorem \ref{vch-weak}-(2)}]
Let  $(\varphi_i,\mu_i,\sigma_i)$, $i=1,2$, be two weak solutions to problem \eqref{1f1}--\eqref{ini01} corresponding to the same initial datum $(\varphi_0, \sigma_0)$.
For convenience, we set their differences by
$$
\varphi=\varphi_1-\varphi_2,
\quad \mu=\mu_1-\mu_2,
\quad \sigma=\sigma_1-\sigma_2.
$$
Thanks to the additional regularity $\sigma_2\in L^8(0,T; L^4(\Omega))$ and the interpolation inequality
$\|\bm{v}\|_{\bm{L}^{4}}\leq C\|\bm{v}\|^{\frac{1}{4}}\|\nabla \bm{v}\|^\frac{3}{4}$,
for any $\xi\in L^2(0,T; H^1(\Omega))$, we have
\begin{align*}
& \left|\int_0^T\!\!\int_\Omega (\bm{v}\cdot \nabla \sigma_2)\xi\,\mathrm{d}x\mathrm{d}s\right|
=\left|\int_0^T\!\!\int_\Omega (\bm{v}\cdot \nabla \xi )\sigma_2\,\mathrm{d}x\mathrm{d}s\right|
\\
&\quad \leq \int_0^T \|\bm{v}\|_{\bm{L}^4}\|\sigma_2\|_{L^4}\|\nabla \xi\|\,\mathrm{d}s
\\
&\quad \leq C\int_0^T \|\bm{v}\|^{\frac{1}{4}} \|\nabla \bm{v}\|^\frac{3}{4} \|\sigma_2\|_{L^4}\|\nabla \xi\|\,\mathrm{d}s
\\
&\quad \leq C\|\bm{v}\|_{L^\infty(0,T;\bm{L}^2(\Omega))}^\frac{1}{4} \|\bm{v}\|_{L^2(0,T;\bm{H}^1(\Omega))}^{\frac{3}{4}} \|\sigma_2\|_{L^8(0,T;L^4(\Omega))} \|\xi\|_{L^2(0,T;H^1(\Omega))},
\end{align*}
which implies that $\partial_t\sigma_2\in L^2(0,T;(H^1(\Omega))')$ by comparison in the weak formulation for $\sigma_2$ (note that the regularity of $\varphi_2$ is sufficient for a test function in $L^2(0,T; H^1(\Omega))$). In a similar manner, for any $\zeta\in L^8(0,T; L^4(\Omega))$, it holds
\begin{align*}
& \left|\int_0^T\!\!\int_\Omega (\bm{v}\cdot \nabla \sigma_1)\zeta\,\mathrm{d}x\mathrm{d}s\right|
\\
&\quad \leq \int_0^T\|\bm{v}\|_{\bm{L}^4} \|\nabla \sigma_1\| \|\zeta\|_{L^4}\mathrm{d}s
\leq \int_0^T\|\bm{v}\|^{\frac{1}{4}}\|\nabla \bm{v}\|^\frac{3}{4}
\|\nabla \sigma_1\| \|\zeta\|_{L^4}\,\mathrm{d}s
\\
&\quad \leq
C\|\bm{v}\|_{L^\infty(0,T;\bm{L}^2(\Omega))}^{\frac{1}{4}} \|\bm{v}\|_{L^2(0,T;\bm{H}^1(\Omega))}^\frac{3}{4} \|\sigma_1\|_{L^2(0,T;H^1(\Omega))} \|\zeta\|_{L^8(0,T;L^4(\Omega))},
\end{align*}
which implies that
$$
\partial_t\sigma_1\in (L^8(0,T;L^4(\Omega)))' + L^2(0,T;(H^1(\Omega))').
$$

The above observations allow us to test the weak formulations for $\sigma_1$, $\sigma_2$ by $\sigma_2$, respectively. Meanwhile, $\sigma_1$ can be used as a test function in the weak formulation of $\sigma_2$ as well. Integrating the resultants on $[0,t]\subset[0,T]$, we obtain
\begin{align}
\frac12\|\sigma_2(t)\|^2
+\int_0^t \|\nabla \sigma_2(s)\|^2\,\mathrm{d}s
= \frac12\|\sigma_0\|^2+ \chi\int_0^t\!\int_\Omega \nabla \sigma_2 \cdot\nabla \varphi_2 \,\mathrm{d}x\mathrm{d}s,
\label{sig2-eq}
\end{align}
and
\begin{align}
& (\sigma_1(t),\sigma_2(t))
+ \,\underbrace{\int_0^t\! \int_\Omega \bm{v}\cdot \nabla (\sigma_1\sigma_2)\,\mathrm{d}x\mathrm{d}s}_{=0}\,
+ 2 \int_0^t\! \int_\Omega \nabla \sigma_1\cdot \nabla \sigma_2 \,\mathrm{d}x\mathrm{d}s
\notag \\
&\quad =\|\sigma_0\|^2 + \chi\int_0^t\!\int_\Omega \big( \nabla \sigma_1 \cdot\nabla \varphi_2 +  \nabla \sigma_2 \cdot\nabla \varphi_1\big) \,\mathrm{d}x\mathrm{d}s.
\label{sig12-eq}
\end{align}
On the other hand, we recall that the weak solution $(\varphi_1,\sigma_1)$ satisfies the energy inequality \eqref{sig-ener-app1b} on $[0,t]\subset [0,T]$. Adding it with \eqref{sig2-eq} and subtracting \eqref{sig12-eq}, we arrive at the following estimate on the difference $\sigma$:
\begin{align}
& \frac12\|\sigma(t)\|^2 + \int_0^t \|\nabla \sigma(s)\|^2\,\mathrm{d}s
\leq \chi \int_0^t\!\int_\Omega \nabla \sigma \cdot \nabla \varphi  \,\mathrm{d}x\mathrm{d}s
\notag\\
& \quad \leq  \frac12 \int_0^t \|\nabla \sigma (s)\|^2\,\mathrm{d}s
+ \frac{\chi^2}{2} \int_0^t \|\nabla \varphi(s)\|^2\,\mathrm{d}s,
\quad \forall\, t\in [0,T].
\label{diff-sig-app1}
\end{align}

Concerning the difference $\varphi$, since $\varphi(0)=0$, it holds
$$
\overline{\varphi}(t)=0,\quad \forall\, t\in [0,T].
$$
Taking advantage of this fact, testing the equation for $\varphi$ by $\mathcal{N}(\varphi-\overline{\varphi})=\mathcal{N} \varphi$, we get
\begin{align}
\frac12\frac{\mathrm{d}}{\mathrm{d}t} \|\varphi\|_{V_0^{-1}}^2
+ (\mu, \varphi) =
 (\bm{v}\varphi, \nabla  \mathcal{N} \varphi).
\notag
\end{align}
The term on the right-hand side can be estimated by Poincar\'{e}'s inequality, the Poincar\'e--Wirtinger inequality and Young's inequality such that
\begin{align*}
\left|(\bm{v}\varphi, \nabla  \mathcal{N}\varphi)\right|
&  \leq C\|\bm{v}\|_{\bm{L}^3}\| \varphi\|_{L^6}\| \nabla  \mathcal{N}\varphi\|
\leq \frac14\|\nabla \varphi\|^2 +C \|\bm{v}\|_{\bm{H}^1}^2\| \varphi\|_{V_0^{-1}}^2.
\end{align*}
Thanks to (H2), we can handle $(\mu, \varphi)$ by a similar argument like that in \cite[Section 4]{H}:
\begin{align*}
(\mu, \varphi) &\geq \frac34 \|\nabla \varphi\|^2- C\|\varphi\|_{V_0^{-1}}^2 - \frac12\|\sigma\|^2.
\end{align*}
Combining the above estimates, we get
\begin{align}
& \frac12\frac{\mathrm{d}}{\mathrm{d}t} \| \varphi\|_{V_0^{-1}}^2
+ \frac12\|\nabla \varphi\|^2
 \leq \frac12\|\sigma\|^2 + C\big(1+ \|\bm{v}\|_{\bm{H}^1}^2\big)\|\varphi\|_{V_0^{-1}}^2.
 \label{w-diff-vphi}
\end{align}
Integrating on $[0,t]\subset[0,T]$, we obtain
\begin{align}
\| \varphi(t)\|_{V_0^{-1}}^2
+  \int_0^t \|\nabla \varphi(s)\|^2 \,\mathrm{d}s
 \leq  \int_0^t \|\sigma(s)\|^2\,\mathrm{d}s
 + C\int_0^t \big(1+ \|\bm{v}(s)\|_{\bm{H}^1}^2\big)\|\varphi(s)\|_{V_0^{-1}}^2\,\mathrm{d}s. \label{diff-phi-app1}
\end{align}

If $\chi=0$, we infer from \eqref{diff-sig-app1} and Gronwall's lemma that $\sigma(t)=0$ for $t\in [0,T]$. Then by \eqref{diff-phi-app1} and Gronwall's lemma, we can further conclude $\varphi(t)=0$ for $t\in [0,T]$.
Otherwise, if $\chi\neq 0$, from \eqref{diff-sig-app1} and  \eqref{diff-phi-app1}, we find
\begin{align}
& \chi^2 \| \varphi(t)\|_{V_0^{-1}}^2 + \|\sigma(t)\|^2
\leq \max\{1,\chi^2\} \int_0^t \big(1+ \|\bm{v}(s)\|_{\bm{H}^1}^2\big)\big( \chi^2 \|\varphi(s)\|_{V_0^{-1}}^2+  \|\sigma(s)\|^2 \big)\,\mathrm{d}s,
\notag
\end{align}
which together with Gronwall's lemma again yields that $\varphi(t)=\sigma(t)=0$ for $t\in [0,T]$.

The proof of Theorem \ref{vch-weak}-(2) is complete.
\end{proof}

\subsection{Solutions with higher-order regularity}

In order to prove Theorem \ref{vch}, we first investigate two decoupled auxiliary problems for $\sigma$ and $\varphi$, respectively.

\subsubsection{Decoupled auxiliary problem for $\sigma$}
Let $\bm{v}, \varphi$ be two given functions. Consider the following initial boundary value problem of a convective reaction-diffusion equation for $\sigma$ subject to the homogeneous Neumann boundary condition:
 \begin{subequations} \label{1f1-sig}
	\begin{alignat}{3}
	&\partial_t \sigma+\bm{v} \cdot \nabla \sigma= \Delta (\sigma-\chi\varphi),  \qquad &\textrm{in}&\ \Omega\times(0,+\infty),
    \label{1f2.b-sig}\\
    &{\partial}_{\bm{n}}\sigma=0,\qquad\qquad &\textrm{on}& \ \partial\Omega\times(0,+\infty),
\label{boundary1-sig}\\
& \sigma|_{t=0}=\sigma_{0}(x), \qquad &\textrm{in}&\ \Omega. \label{ini01-sig}
	\end{alignat}
\end{subequations}

Our result reads as follows
\begin{proposition}
\label{sig-weak}
Let $\Omega$ be a bounded domain in $\mathbb{R}^3$, with boundary $\partial \Omega$ of class $C^2$. Suppose that $\chi\in \mathbb{R}$ and
$$
\bm{v} \in L^2(0, +\infty; \bm{H}_{0, \mathrm{div}}^1(\Omega)),\quad \varphi \in L^2_\mathrm{uloc}([0,+\infty);H^2_N(\Omega)).
$$
\begin{itemize}
\item[\emph{(1)}] \emph{Existence}. For any initial datum $\sigma_0 \in L^2(\Omega)$, problem \eqref{1f1-sig} admits a global weak solution
\begin{align}
  & \sigma\in L^{\infty}(0, +\infty ; L^2(\Omega))\cap L_{\mathrm{uloc}}^2([0, +\infty); H^1(\Omega))\cap   W^{1,\frac{4}{3}}_{\mathrm{uloc}}([0, +\infty); (H^1(\Omega))')
  \notag
\end{align}
that satisfies
\begin{align}
&\left \langle\partial_t \sigma,\xi\right \rangle_{(H^1)',H^1}
+ ( \bm{v}\cdot\nabla \sigma, \xi) + (\nabla \sigma,\nabla \xi)
= \chi ( \nabla \varphi,\nabla \xi),
\quad \forall\ \xi \in H^1(\Omega),
\label{test2.bw-sig}
\end{align}
almost everywhere in $(0,+\infty)$, and  the initial condition \eqref{ini01-sig} almost everywhere in $\Omega$. In addition, the solution $\sigma$ satisfies the following energy inequality:
\begin{align}
\frac12\|\sigma(t)\|^2
+\int_0^t \|\nabla \sigma(s)\|^2\,\mathrm{d}s
\leq \frac12\|\sigma_0\|^2
+ \chi\int_0^t\!\int_\Omega \nabla \sigma \cdot\nabla \varphi \,\mathrm{d}x\mathrm{d}s,
\quad \forall\, t \geq 0.
\label{asig-ener}
\end{align}
\item[\emph{(2)}] \emph{Conditional uniqueness}. For any $T\in (0,+\infty)$, let $\sigma_i$, $i=1,2$, be two weak solutions to problem \eqref{1f1-sig} on $[0,T]$ corresponding to the same initial datum $\sigma_0$ as described in \emph{(1)}.
Assume, in addition, $\bm{v}\in L^\infty(0,+\infty;\bm{L}^2_{0,\mathrm{div}}(\Omega))$ and $\sigma_2\in L^8(0,T;L^4(\Omega))$,
then
$$
\sigma_1(t)=\sigma_2(t),\quad \forall\, t\in [0,T].
$$
\item[\emph{(3)}] \emph{Regularity}.
Assume, in addition, $\bm{v}\in L^\infty(0,+\infty;\bm{L}^2_{0,\mathrm{div}}(\Omega))$,
$\varphi\in  L^2_\mathrm{uloc}([0,+\infty);W^{2,6}(\Omega))$ and $\sigma_0\in L^6(\Omega)$. Then problem \eqref{1f1-sig} admits a unique global weak solution satisfying
\begin{align}
  & \sigma\in L^{\infty}(0, +\infty ; L^6(\Omega))\cap L_{\mathrm{uloc}}^2([0, +\infty); H^1(\Omega))\cap   H^{1}_{\mathrm{uloc}}([0, +\infty); (H^1(\Omega))').
  \notag
\end{align}
\end{itemize}
\end{proposition}
\begin{proof}
The proof of conclusion (1) closely follows that of Theorem \ref{vch-weak}-(1). In what follows, we only derive \textit{a priori} estimates, neglecting the approximating procedure. First, we still have the mass conservation property
\be
\overline{\sigma}(t)=\overline{\sigma_0}, \quad\forall\,t\geq 0.
\label{1aver-app1-sig}
\ee
Next, testing \eqref{1f2.b-sig} by $\sigma-\overline{\sigma}$, integrating over $\Omega$, using \eqref{1aver-app1-sig}, H\"{o}lder's inequality, the Poincar\'{e}--Wirtinger inequality and Young's inequality, we obtain
\begin{align}
\frac12 \frac{\mathrm{d}}{\mathrm{d}t}\|\sigma-\overline{\sigma}\|^2
+ \|\nabla \sigma\|^2
&=-\chi\int_\Omega \Delta \varphi (\sigma-\overline{\sigma})\,\mathrm{d}x
\notag \\
&\leq |\chi|\|\Delta\varphi\|\|\sigma-\overline{\sigma}\|
\notag\\
&\leq C|\chi|\|\Delta\varphi\|\|\nabla \sigma\|
\notag\\
&\leq \frac12\|\nabla \sigma\|^2 +C\chi^2\|\Delta\varphi\|^2,
\label{aux-sig-1}
\end{align}
where $C>0$ depends only on $\Omega$. Applying the Poincar\'{e}--Wirtinger inequality again, we get
\begin{align}
 \frac{\mathrm{d}}{\mathrm{d}t}\|\sigma-\overline{\sigma}\|^2
+ C_1\|\sigma-\overline{\sigma}\|^2
& \leq C_2 \chi^2\|\Delta\varphi\|^2,
\label{aux-sig-2}
\end{align}
for $C_1,C_2>0$ depending only on $\Omega$. Applying Lemma \ref{GronW}, we obtain
\begin{align}
\|\sigma(t)-\overline{\sigma}(t)\|^2 \leq
2\|\sigma_0-\overline{\sigma_0}\|^2 e^{-C_1 t}
+ \frac{2e^{C_1}C_2\chi^2}{1-e^{-C_1}}\sup_{r\geq 0}\int_r^{r+1} \|\Delta \varphi(s)\|^2\,\mathrm{d}s,\quad \forall\, t\geq 0.
\label{aux-sig-3}
\end{align}
This estimate, together with \eqref{1aver-app1-sig}, yields
\begin{align}
\sup_{t\geq 0}\|\sigma(t)\|^2\leq C.
\label{aux-sig-3b}
\end{align}
Integrating \eqref{aux-sig-1} on $[t,t+1]$, using \eqref{aux-sig-3} and the Poincar\'{e}--Wirtinger inequality, we further get
\begin{align}
 \sup_{t\geq 0} \int_t^{t+1} \|\sigma(s)\|_{H^1}^2\, \mathrm{d}s \leq   C + C \sup_{t\geq 0}\int_t^{t+1} \|\Delta \varphi(s)\|^2\,\mathrm{d}s,
 \label{aux-sig-4}
\end{align}
where the constant $C>0$ depends on $\Omega$, $\|\sigma_0\|$, $\overline{\sigma_0}$ and $\chi$.
This implies $\sigma\in L^2_{\mathrm{uloc}}([0,+\infty);H^1(\Omega))$. An application of the interpolation inequality $\|\sigma\|_{L^3}\leq C\|\sigma\|_{H^1}^\frac12\|\sigma\|^\frac12$ leads to $\sigma\in L^4_{\mathrm{uloc}}([0,+\infty);L^3(\Omega))$. Besides, by a comparison in \eqref{test2.bw-sig}, we see that
\begin{align}
 \int_t^{t+1} \|\partial_t\sigma(s)\|_{(H^1)'}^\frac{4}{3}\,\mathrm{d}s
 &  \leq C\int_t^{t+1} \big(\|\bm{v}(s)\|_{\bm{L}^6}^\frac43 \| \sigma(s)\|_{L^3}^\frac43
+\|\nabla  \sigma(s)\|^\frac43 +\|\nabla \varphi(s)\|^\frac{4}{3}\big) \,\mathrm{d}s \notag\\
&  \leq  C\left(\int_0^\infty  \|\bm{v}(s)\|_{\bm{H}^1}^2\,\mathrm{d}s \right)^\frac23
\left(\int_t^{t+1}  \|\sigma(s)\|_{L^3}^4\,\mathrm{d}s \right)^\frac13\notag
\\
&\quad + C \int_t^{t+1} \|\nabla \sigma(s)\|^2\,\mathrm{d}s
+ C \int_t^{t+1} \|\nabla \varphi(s)\|^2\,\mathrm{d}s + C,
\notag
\end{align}
for all $t\geq 0$, which gives $\sigma\in W^{1,\frac43}_{\mathrm{uloc}}([0,+\infty);(H^1(\Omega))')$.
\smallskip

Due to the energy inequality \eqref{asig-ener}, the proof of conclusion (2) is the same as that of Theorem \ref{vch-weak}-(2) (cf. \eqref{diff-sig-app1} with $\varphi=0$ therein).
\smallskip

It remains to prove conclusion (3). First, we observe that, due to the stated regularity of the solution, its uniqueness is a direct consequence of the conclusion (2). In the following proof of existence, we only derive \textit{a priori} estimates, which can be justified by a suitable approximating procedure.

Testing \eqref{1f2.b-sig} by $\sigma^3$, integrating over $\Omega$, we get
\begin{align}
&\frac14 \frac{\mathrm{d}}{\mathrm{d} t}\|\sigma\|_{L^4}^4
+\frac34\|\nabla \sigma^2\|^2  =-\int_{\Omega} \chi\sigma^3\Delta\varphi \, \mathrm{d}x.
\label{energy e}
\end{align}
From the Sobolev embedding theorem and the Poincar\'{e}--Wirtinger inequality, it follows that
\be
\|\sigma\|_{L^{12}}^2=\|\sigma^2\|_{L^6}\le C\left(\|\nabla \sigma^2\|+\int_{\Omega} \sigma^2\,\mathrm{d}x\right).
\label{aux-sig-5}
\ee
The above estimate, \eqref{aux-sig-3b} and Young's inequality enable us to conclude
\begin{align*}
	\|\sigma^3\|_{L^\frac{6}{5}}^2 =\|\sigma\|_{L^\frac{18}{5}}^6
	&\le  \|\sigma\|^{\frac{14}{5}} \|\sigma\|_{L^{12}}^{\frac{16}{5}}
 \le  \|\nabla\sigma^2\|^2+C,
\end{align*}
which combined with H\"{o}lder's and Young's inequalities yields
\begin{align}
-\int_{\Omega}\chi\sigma^3\Delta\varphi \,\mathrm{d}x
& \le   |\chi|\|\sigma^3\|_{L^\frac{6}{5}}\|\Delta\varphi\|_{L^6}
\le \frac14\|\nabla \sigma^2\|^2 + C\chi^2 \|\Delta\varphi\|^2_{L^6} +C.
\label{l13}
\end{align}
From \eqref{energy e} and \eqref{l13}, we obtain
\begin{align}
&\frac14 \frac{\mathrm{d}}{\mathrm{d} t}\|\sigma\|_{L^4}^4
+\frac12\|\nabla \sigma^2\|^2
\le  C\chi^2 \|\Delta\varphi\|^2_{L^6}+C.
\label{energy e1}
\end{align}
On the other hand, using \eqref{aux-sig-3b}, \eqref{aux-sig-5}, H\"{o}lder's inequality, Young's inequality, the Sobolev embedding theorem and the Poincar\'{e}--Wirtinger inequality, we get
\begin{align}
\|\sigma\|_{L^4}^4 =\int_\Omega\sigma^4\,\mathrm{d}x
&\le \|\sigma\|_{L^{12}}^2\|\sigma\|_{L^3} \|\sigma\|
\notag\\
&\le \frac12\|\sigma\|_{L^{12}}^4 + \frac12 \|\sigma\|_{L^3}^2\|\sigma\|^2
\notag\\
&\le C_3\big(1+ \|\nabla \sigma^2\|^2 +\|\nabla\sigma\|^2 \big),
\label{q14}
\end{align}
where $C_3>0$ does not depend on time.
Combining \eqref{energy e1} and \eqref{q14}, we infer that
\begin{align}
& \frac{\mathrm{d}}{\mathrm{d} t}\|\sigma\|_{L^4}^4
+C_4\|\sigma\|_{L^4}^4
+ \|\nabla \sigma^2\|^2
\le C(\chi^2 \|\Delta\varphi\|^2_{L^6}
+  \|\nabla\sigma\|^2 +1),
\label{energy e2}
\end{align}
where $0<C_4< 1/C_3$, and the positive constant $C$ on the right-hand side does not depend on time. Since
$$
\sup_{t\geq 0}\int_t^{t+1}\left(\chi^2 \|\Delta\varphi(s)\|^2_{L^6}
+ \|\nabla\sigma(s)\|^2 +1 \right)\,\mathrm{d}s<+\infty,
$$
thanks to the assumption on $\varphi$ and the estimate \eqref{aux-sig-4}, we can apply Lemma \ref{GronW} and the same argument for
\eqref{aux-sig-1}, \eqref{aux-sig-2} to conclude from \eqref{energy e2} that
\begin{align}
\sup_{t\geq 0} \|\sigma(t)\|_{L^4}^4 +  \sup_{t\geq 0} \int_{t}^{t+1} \|\nabla \sigma^2(s)\|^2 \,\mathrm{d}s
 \leq  C,
\label{diss}
\end{align}
where $C>0$ depends on $\Omega$, $\chi$, $\|\sigma_0\|_{L^4}$, $\sup_{t\geq 0}\int_t^{t+1} \|\varphi(s)\|_{W^{2,6}}^2\,\mathrm{d}s$, but not on time. In addition, by a comparison in \eqref{test2.bw-sig}, we see that
\begin{align}
 \int_t^{t+1} \|\partial_t\sigma(s)\|_{(H^1)'}^2\,\mathrm{d}s
 &  \leq C\int_t^{t+1} \big(\|\bm{v}(s)\|_{\bm{L}^6}^2 \| \sigma(s)\|_{L^3}^2
+\|\nabla  \sigma(s)\|^2 +\|\nabla \varphi(s)\|^2\big) \,\mathrm{d}s \notag\\
&  \leq  C \sup_{t\geq 0}\|\sigma(s)\|_{L^3}^2 \int_0^\infty  \|\bm{v}(s)\|_{\bm{H}^1}^2\,\mathrm{d}s
+ C \int_t^{t+1} \|\nabla \sigma(s)\|^2\,\mathrm{d}s
 \notag
\\
&\quad
+ C \int_t^{t+1} \|\nabla \varphi(s)\|^2\,\mathrm{d}s + C,
\notag
\end{align}
for all $t\geq 0$, which implies $\sigma\in H^1_{\mathrm{uloc}}([0,+\infty);(H^1(\Omega))')$.

Next, testing \eqref{1f2.b-sig} by $\sigma^5$, integrating over $\Omega$, we obtain
\begin{align}
&\frac16 \frac{\mathrm{d}}{\mathrm{d} t}\|\sigma\|_{L^6}^6
+\frac59\|\nabla \sigma^3\|^2  =-\int_{\Omega} \chi\sigma^5 \Delta\varphi\, \mathrm{d}x.
\label{energy e-6}
\end{align}
Noticing that
\be
\|\sigma\|_{L^{18}}^3=\|\sigma^3\|_{L^6}\le C\left(\|\nabla \sigma^3\|+\left|\int_{\Omega} \sigma^3\,\mathrm{d}x\right|\right),
\label{l12-6}
\ee
using \eqref{diss} and Young's inequality, we get
\begin{align*}
	\|\sigma^5\|_{L^\frac{6}{5}}^2 =\|\sigma\|_{L^6}^{10}
	&\le  \|\sigma\|_{L^4}^{\frac{40}{7}} \|\sigma\|_{L^{18}}^{\frac{30}{7}}
 \le  \|\nabla \sigma^3\|^2+C.
\end{align*}
Thus, the term on the right-hand side of \eqref{energy e-6} can be estimated by
\begin{align}
-\int_{\Omega}\chi \sigma^5 \Delta\varphi \,\mathrm{d}x
& \le  |\chi|\| \sigma ^5\|_{L^\frac{6}{5}} \|\Delta\varphi \|_{L^6}
 \le \frac{2}{9}\|\nabla  \sigma^3\|^2 + C\chi^2 \|\Delta\varphi\|^2_{L^6} +C.
\label{l13-6}
\end{align}
It follows from \eqref{energy e-6} and \eqref{l13-6} that
\begin{align}
&\frac16 \frac{\mathrm{d}}{\mathrm{d} t}\|\sigma\|_{L^6}^6
+\frac13\|\nabla\sigma^3\|^2
\le  C \chi^2\|\Delta\varphi\|^2_{L^6}+C.
\label{energy e1-6}
\end{align}
From H\"{o}lder's and Young's inequalities, the Sobolev embedding theorem, the Poincar\'{e}--Wirtinger inequality, \eqref{diss} and \eqref{l12-6}, we find
\begin{align}
\|\sigma\|_{L^6}^6
= \int_\Omega \sigma^6\,\mathrm{d}x
&\le \|\sigma\|_{L^{18}}^3 \|\sigma\|_{L^3}^2\|\sigma\|_{L^6}
\notag\\
&\le \frac12 \|\sigma\|_{L^{18}}^6 +\frac12\|\sigma\|_{L^3}^4\|\sigma\|_{L^6}^2
\notag\\
&\le C_5\big(1+ \|\nabla \sigma^3\|^2 +\|\nabla \sigma\|^2 \big).
\label{q14-6}
\end{align}
Combining \eqref{energy e1-6} and \eqref{q14-6}, we get
\begin{align}
& \frac{\mathrm{d}}{\mathrm{d} t}\|\sigma\|_{L^6}^6
+C_6\|\sigma\|_{L^6}^6 +\|\nabla \sigma^3\|^2
\le  C\big( \chi^2\|\Delta\varphi\|^2_{L^6} +\|\nabla \sigma\|^2 +1\big),
\notag
\end{align}
where $0<C_6<1/C_5$. This together with Lemma \ref{GronW} yields
\begin{align}
\sup_{t\geq 0} \|\sigma(t)\|_{L^6}^6 +  \sup_{t\geq 0} \int_{t}^{t+1} \|\nabla \sigma^3(s)\|^2 \,\mathrm{d}s
 \leq  C,
\notag
\end{align}
where $C>0$ depends on $\Omega$, $\chi$, $\|\sigma_0\|_{L^6}$, $\sup_{t\geq 0}\int_t^{t+1} \|\varphi(s)\|_{W^{2,6}}^2\,\mathrm{d}s$, but not on time.

With the above estimates, we can prove the existence of a global weak solution to problem \eqref{1f1-sig} with the required regularity properties. The details are omitted.
\end{proof}

\subsubsection{Decoupled auxiliary problem for $(\varphi,\mu)$}

Let $\bm{v}, \sigma$ be two given functions. Consider the following initial boundary value problem of a convective Cahn--Hilliard type equation for $(\varphi, \mu)$ subject to homogeneous Neumann boundary conditions:
\begin{subequations}\label{con-CHs}
	\begin{alignat}{3}
	&\partial_t \varphi +\bm{v} \cdot \nabla \varphi=\Delta \mu -\alpha(\overline{\varphi}-c_0),
    \qquad \qquad &\text{in}&\ \Omega \times (0,+\infty),
\label{con-CHs-1} \\
	&\mu=- \Delta \varphi+  \varPsi'(\varphi)-\chi \sigma +\beta\mathcal{N}(\varphi-\overline{\varphi}),&\text{in}&\ \Omega \times (0,+\infty),
\label{con-CHs-2} \\
& {\partial}_{\bm{n}}\varphi={\partial}_{\bm{n}}\mu =0,\qquad\qquad &\textrm{on}& \   \partial\Omega\times(0,+\infty),
\label{con-CHs-bd}\\
& \varphi|_{t=0}=\varphi_0(x), \qquad &\textrm{in}&\ \Omega.
\label{con-CHs-ini}
	\end{alignat}
\end{subequations}

For problem \eqref{con-CHs}, we establish the following result:
\begin{proposition}\label{con-CHs-ws}
Let $\Omega$ be a bounded domain in $\mathbb{R}^3$, with boundary $\partial \Omega$ of class $C^2$. Suppose that hypotheses (H2)--(H3) are satisfied and
\begin{align*}
\bm{v} \in L^2(0, +\infty; \bm{H}_{0, \mathrm{div}}^1(\Omega)),
\quad \sigma\in L^2_{\mathrm{uloc}}([0,+\infty);H^1(\Omega)).
\end{align*}
\begin{itemize}
\item[\emph{(1)}] \emph{Global weak solution}.
For any initial datum $\varphi_{0}$ satisfying
$\varphi_{0} \in H^{1}(\Omega)\cap L^\infty(\Omega)$ with $\|\varphi_0\|_{L^\infty}\leq 1$, $|\overline{\varphi_0}|<1$, problem \eqref{con-CHs} admits a unique global weak solution $(\varphi, \mu)$ such that
$$
\begin{aligned}
 & \varphi\in L^{\infty}(0, +\infty ; H^1(\Omega))\cap L^{4}_{\mathrm{uloc}}([0,+\infty);H^2_{N}(\Omega)) \cap L^{2}_{\mathrm{uloc}}([0,+\infty);W^{2,6}(\Omega)),
 \\
 & \partial_t \varphi \in L^2(0, +\infty ; (H^1(\Omega))'),\quad \varPsi'(\varphi) \in L^{2}_{\mathrm{uloc}}([0, +\infty); L^6(\Omega))
 \\
 & \varphi\in L^{\infty}(\Omega \times(0, +\infty))\ \text { with }\ |\varphi(x, t)|<1\ \text { a.e. in } \Omega \times(0, +\infty),
 \\
 & \mu\in L_{\mathrm{uloc}}^{2}([0, +\infty); H^1(\Omega)),\quad \nabla \mu \in L^2(0, +\infty; \bm{L}^2(\Omega)).
\end{aligned}
$$
For almost all $t\in (0,+\infty)$, the solution $(\varphi, \mu)$ satisfies
%
\begin{align}
&\left \langle \partial_t \varphi,\xi\right \rangle_{(H^1)',H^1}
+({\bm{v} \cdot \nabla \varphi},\xi)=- (\nabla \mu,\nabla \xi)-\alpha(\overline{\varphi}-c_0,\xi),
\label{con-CHs.aw}
\end{align}
for all $\xi \in H^1(\Omega)$, where
\begin{align}
& \mu=-\Delta \varphi+\varPsi'(\varphi)-\chi \sigma+\beta\mathcal{N}(\varphi-\overline{\varphi}), \quad
\text{a.e. in}\ \Omega\times (0,+\infty).
\label{con-CHs.dw}
\end{align}
Moreover, the initial condition \eqref{con-CHs-ini} is fulfilled almost everywhere in $\Omega$.
\item[\emph{(2)}] \emph{Global strong solution}.
Assume, in addition, $\partial \Omega$ is of class $C^3$ and
\begin{align*}
\sigma\in L^\infty(0,+\infty;L^6(\Omega))\cap H^1_{\mathrm{uloc}}([0,+\infty);(H^1(\Omega))')\quad \text{with}\ \ \sigma(0)\in H^1(\Omega).
\end{align*}

For any initial datum $\varphi_{0}$ satisfying
$\varphi_{0} \in H^{2}_N(\Omega)$, $\|\varphi_0\|_{L^\infty}\leq 1$, $|\overline{\varphi_0}|<1$, $\mu_0 =-\Delta \varphi_0+ \varPsi'(\varphi_0)\in H^1(\Omega)$, problem \eqref{con-CHs} admits a unique global strong solution $(\varphi, \mu)$  such that
$$
\begin{aligned}
 & \varphi\in L^{\infty}(0, +\infty; W^{2, 6}(\Omega)\cap H^2_N(\Omega)),\\
 & \partial_t\varphi \in L^2_{\mathrm{uloc}}([0, +\infty); H^1(\Omega)),
 \\
 & \varphi\in L^{\infty}(\Omega \times(0, +\infty))\ \text { with }\ |\varphi(x, t)|<1\ \text { a.e. in } \Omega \times(0, +\infty),
 \\
 & \varPsi'(\varphi) \in L^{\infty}(0, +\infty; L^6(\Omega)),
 \\
 & \mu\in L^{\infty}(0, +\infty; H^1(\Omega)) \cap L_{\mathrm{uloc}}^2([0, +\infty); H^3(\Omega)\cap H^2_N(\Omega)).
\end{aligned}
$$
The solution $(\varphi, \mu)$ satisfies
\begin{subequations}
	\begin{alignat}{3}
	& \partial_t \varphi+\bm{v} \cdot \nabla \varphi=\Delta \mu-\alpha(\overline{\varphi}-c_0),&
    \notag \\
    &  \mu=-\Delta \varphi+\varPsi'(\varphi)-\chi \sigma+\beta\mathcal{N}(\varphi-\overline{\varphi}),&
    \notag
	\end{alignat}
\end{subequations}
almost everywhere in $\Omega \times (0,+\infty)$.
The initial condition \eqref{con-CHs-ini} is satisfied in $\Omega$. Moreover, if $\bm{v}$ also satisfies $\bm{v} \in L^\infty(0, +\infty; \bm{L}_{0, \mathrm{div}}^2(\Omega))$, then $\partial_t\varphi \in L^\infty(0,+\infty;(H^1(\Omega))')$.
\end{itemize}
\end{proposition}

\begin{proof}[\textbf{Proof of Proposition \ref{con-CHs-ws}-(1)}]
The proof of conclusion (1) closely follows the argument for Theorem \ref{vch-weak}-(1). Thus, for the existence part, we only derive necessary \textit{a priori} estimates below. Concerning the uniqueness, we can obtain an analogue of the inequality \eqref{w-diff-vphi} for the difference of two weak solutions (with $\sigma=0$ therein) and reach the conclusion.

In the following we shall work with the \textit{a priori} bound
\begin{align}
    \sup_{t \geq 0} \|\varphi(t)\|_{L^{\infty}} \leq 1,
    \label{con-CHs-Linfty}
\end{align}
which can be guaranteed by (H2) (cf. also \eqref{phi-app1}).

First, taking $\xi=1$ in \eqref{con-CHs.aw}, we have the mass relation
\be
\overline{\varphi}(t)
= c_0+e^{-\alpha t}\big(\overline{\varphi_{0}}-c_0\big),
\quad\forall\,t\geq 0.
\label{con-CHs-aver}
\ee
Due to the assumption $|\overline{\varphi_{0}}|<1$ and (H3), we infer from \eqref{con-CHs-aver} that for all $t\geq0$,
$$
\overline{\varphi}(t)\in [c_0,\overline{\varphi_{0}}]\ \ \text{if}\ \  \overline{\varphi_{0}}>c_0; \quad \overline{\varphi}(t)\in [\overline{\varphi_{0}},c_0]\ \ \text{if}\ \  \overline{\varphi_{0}}<c_0;\quad \overline{\varphi}(t)=c_0\ \ \text{if}\ \ \overline{\varphi_{0}}=c_0.
$$
Hence, the spatial average $\overline{\varphi}(t)$ is strictly separated from $\pm 1$ for all time.

Next, testing \eqref{con-CHs.aw} by $\mu+\chi\sigma$ and using \eqref{con-CHs.dw}, we obtain
\begin{align}
&\frac{\mathrm{d}}{\mathrm{d}t} \left(\frac12 \|\nabla \varphi\|^2
+ \int_\Omega \varPsi(\varphi)\,\mathrm{d}x
+ \frac{\beta}{2} \|\nabla \mathcal{N}(\varphi-\overline{\varphi})\|^2\right)
+ \|\nabla \mu\|^2
\notag \\
&\quad = -\int_\Omega (\bm{v}\cdot\nabla \varphi)(\mu+\chi\sigma)\,\mathrm{d}x
-\chi\int_\Omega \nabla \mu\cdot \nabla \sigma\,\mathrm{d}x -\alpha(\overline{\varphi}-c_0)\int_\Omega (\mu+ \chi \sigma)\,\mathrm{d}x.
\label{con-CHs-es1}
\end{align}
The first two terms on the right-hand side of \eqref{con-CHs-es1} can be estimated as follows
\begin{align}
& -\int_\Omega (\bm{v}\cdot\nabla \varphi)(\mu+\chi\sigma)\,\mathrm{d}x
 -\chi\int_\Omega \nabla \mu\cdot \nabla \sigma\,\mathrm{d}x\notag \\
&\quad  \leq \left|\int_\Omega (\bm{v}\cdot\nabla \mu)\varphi\,\mathrm{d}x\right|
+ \left|\chi\int_\Omega (\bm{v}\cdot\nabla \sigma) \varphi\,\mathrm{d}x\right| + \left|\chi\int_\Omega \nabla \mu\cdot \nabla \sigma\,\mathrm{d}x\right|\notag \\
& \quad \leq \|\varphi\|_{L^\infty}\|\bm{v}\|(\|\nabla \mu\|+ |\chi|\|\nabla \sigma\|) + |\chi|\|\nabla \mu\|\|\nabla \sigma\|\notag \\
&\quad \leq \frac{1}{8}\|\nabla \mu\|^2 + C(\|\bm{v}\|^2+ \chi^2\|\nabla \sigma\|^2).
\label{con-CHs-es2}
\end{align}
Concerning the third term, we can use (H2) and exploit the argument for \cite[(3.20)]{HW2022} (with $\gamma=0$ therein) to obtain
\begin{align}
& -\alpha(\overline{\varphi}-c_0)\int_\Omega \mu\,\mathrm{d}x
\leq \frac12 \|\nabla \mu\|^2+ \frac12 \|\nabla\sigma\|^2 + C (1+\alpha^3) \alpha e^{-\alpha t}|\overline{\varphi_{0}}-c_0|.
\notag
\end{align}
Besides,
\begin{align}
& -\alpha(\overline{\varphi}-c_0)\int_\Omega \chi \sigma\,\mathrm{d}x
\leq  2\alpha|\chi|\|\sigma\|_{L^1}\leq C(\|\sigma\|^2+1).\notag
\end{align}
Testing \eqref{con-CHs-2} by $\varphi-\overline{\varphi}$, and using the convexity of $\varPsi_0$ (see (H2)), (H3), \eqref{con-CHs-Linfty} as well as \eqref{con-CHs-aver}, we find
\begin{align}
& \int_\Omega (\mu-\overline{\mu}) (\varphi-\overline{\varphi})\,\mathrm{d}x +
\chi \int_\Omega (\sigma-\overline{\sigma}) (\varphi-\overline{\varphi})\,\mathrm{d}x \notag \\
&\quad = \|\nabla \varphi\|^2 + \int_\Omega \varPsi_0'(\varphi)(\varphi-\overline{\varphi})\,\mathrm{d}x - \theta_0\|\varphi-\overline{\varphi}\|^2 + \beta\|\nabla \mathcal{N}(\varphi-\overline{\varphi})\|^2\notag \\
&\quad \geq \|\nabla \varphi\|^2 + \int_\Omega \varPsi_0(\varphi)\,\mathrm{d}x - \int_\Omega \varPsi_0(\overline{\varphi})\,\mathrm{d}x
+ \frac{\beta}{2}\|\nabla \mathcal{N}(\varphi-\overline{\varphi})\|^2
-C\notag\\
&\quad \geq \|\nabla \varphi\|^2 + \int_\Omega \varPsi(\varphi)\,\mathrm{d}x + \frac{\beta}{2}\|\nabla \mathcal{N}(\varphi-\overline{\varphi})\|^2-C.\notag
\end{align}
The above inequality together with H\"{o}lder's inequality, the Poincar\'{e}--Wirtinger inequality, and Young's inequality yields
\begin{align}
& \|\nabla \varphi\|^2
+ \int_\Omega \varPsi(\varphi)\,\mathrm{d}x
+ \frac{\beta}{2}\|\nabla \mathcal{N}(\varphi-\overline{\varphi})\|^2
\notag \\
&\quad \leq \|\mu-\overline{\mu}\|\|\varphi-\overline{\varphi}\| +
|\chi| \|\sigma-\overline{\sigma}\|\|\varphi-\overline{\varphi}\| +C
\notag \\
&\quad \leq \frac12\|\nabla \varphi\|^2 + C(1+\chi^2)(\|\nabla \mu\|^2 +\|\nabla \sigma\|^2) + C,
\notag
\end{align}
where $C>0$ depends on $\Omega$, $\overline{\varphi_0}$, and the coefficients of the system. On the other hand, due to (H2) and \eqref{con-CHs-Linfty}, there exists $C_7>0$ depending only on $\Omega$, $\theta_0$ and $\beta$ such that
\begin{align}
\frac12\|\nabla \varphi\|^2
+ \int_\Omega \varPsi(\varphi)\,\mathrm{d}x
+ \frac{\beta}{2}\|\nabla \mathcal{N}(\varphi-\overline{\varphi})\|^2 +C_7\geq 0.
\label{low-bd-E}
\end{align}
Combining the above estimates, we can deduce from
\eqref{con-CHs-es1} that
\begin{align}
&\frac{\mathrm{d}}{\mathrm{d}t} \left(\frac12\|\nabla \varphi\|^2
+ \int_\Omega \varPsi(\varphi)\,\mathrm{d}x
+ \frac{\beta}{2} \|\nabla \mathcal{N}(\varphi-\overline{\varphi})\|^2+C_7\right) \notag \\
&\qquad + \frac14\|\nabla \mu\|^2 + C_{8}\left(\frac12\|\nabla \varphi\|^2
+ \int_\Omega \varPsi(\varphi)\,\mathrm{d}x
+ \frac{\beta}{2} \|\nabla \mathcal{N}(\varphi-\overline{\varphi})\|^2+C_7\right)
\notag \\
&\quad \leq C_9(\|\bm{v}\|^2+ \|\sigma\|_{H^1}^2+1),
\label{con-CHs-es5}
\end{align}
for some positive constants $C_8, C_9$ that are independent of time. Recalling the assumptions on $\bm{v}$, $\sigma$, we can apply Lemma \ref{GronW} to \eqref{con-CHs-es5} and conclude
\begin{align}
& \sup_{t\geq 0} \left(\frac12\|\nabla \varphi(t)\|^2
+ \int_\Omega \varPsi(\varphi(t))\,\mathrm{d}x
+ \frac{\beta}{2} \|\nabla \mathcal{N}(\varphi-\overline{\varphi})(t)\|^2\right) \notag \\
&\quad +  \sup_{t\geq 0} \int_{t}^{t+1} \|\nabla \mu(s)\|^2 \,\mathrm{d}s
 \leq  C,
\label{con-CHs-es6}
\end{align}
where the positive constant $C$ depends on $\Omega$,  the coefficients of the system,   $\max_{r\in[-1,1]}|\varPsi(r)|$, $\overline{\varphi_{0}}$, $\|\varphi_{0}\|_{H^1}$, $\|\bm{v}\|_{L^2(0,+\infty,\bm{L}^2(\Omega))}$, $\sup_{t\geq 0}\int_t^{t+1}\|\sigma(s)\|_{H^1(\Omega)}^2\,\mathrm{d}s$, but it is independent of the time.

With the aid of \eqref{con-CHs-es6}, the rest estimates can be obtained similarly to Section \ref{uni-es-appW}. We omit the details here.
\end{proof}

The proof of Proposition \ref{con-CHs-ws}-(2) is based on a suitable regularization of the original problem \eqref{con-CHs}, cf. \cite{GMT,Gio2022,H2,HW2022}.
\smallskip

\textbf{The regularized problem}. We first approximate the initial data. For any integer $k\geq 1$, let $h_k$ be a globally Lipschitz continuous function satisfying
\begin{equation*}
h_k(r)=\begin{cases}
k,\quad\ \ \ \, r>k,\\
r,\qquad   r\in[-k, k],\\
-k,\quad \, r<-k.
\end{cases}
\end{equation*}
Define
$$
\widetilde{\mu}_0=-\Delta \varphi_0+ \varPsi_0'(\varphi_0)=\mu_0+\theta_0\varphi_0.
$$
We take the cut-off $\widetilde{\mu}_{0,k}=h_k\circ \widetilde{\mu}_{0}$.
Since $\mu_0\in H^1(\Omega)$ and $\varphi_0\in H^2_N(\Omega)$, it follows from the superposition principle that
$$
\widetilde{\mu}_0,\ \widetilde{\mu}_{0,k} \in H^1(\Omega),\quad  \|\widetilde{\mu}_{0,k}\|_{H^1}\leq \|\widetilde{\mu}_{0}\|_{H^1}\quad \text{and}\quad  \|\widetilde{\mu}_{0,k}-\widetilde{\mu}_{0}\|\to 0\quad \text{as}\quad k\to+\infty.
$$
Given $\widetilde{\mu}_{0,k}\in H^1(\Omega)$, let us consider the nonlinear elliptic problem
$$
\begin{cases}
-\Delta \varphi_{0,k} +\varPsi_0'(\varphi_{0,k})=\widetilde{\mu}_{0,k}, \quad \text{in}\ \Omega, \\
\partial_{\bm{n}}\varphi_{0,k} =0,\qquad \qquad \qquad \quad \ \ \text{on}\ \partial \Omega.
\end{cases}
$$
Thanks to \cite[Lemma A.1]{GMT}, it admits a unique solution $\varphi_{0,k}\in H^2_N(\Omega)$ with $\varPsi_0'(\varphi_{0,k})\in L^2(\Omega)$. Furthermore, we have
$$
\|\varphi_{0,k}\|_{H^2}+\|\varPsi_0'(\varphi_{0,k})\|\leq C(1+\|\widetilde{\mu}_0\|)\quad \text{and}\quad  \|\varphi_{0,k}-\varphi_0\|_{H^1}\to 0\quad\text{as}\quad k\to +\infty.
$$
As a consequence, there exists a sufficiently large integer $\widehat{k}$ such that
$$
\|\varphi_{0,k}\|_{H^1}\leq 1+\|\varphi_{0}\|_{H^1} \quad
\text{and}\quad |\overline{\varphi_{0,k}}|\leq \frac{1+|\overline{\varphi_{0}}|}{2}<1,
\quad \forall\, k\geq \widehat{k}.
$$
Besides, we infer from \cite[Lemma A.1]{CG} that
$\|\varPsi_0'(\varphi_{0,k})\|_{L^\infty}\leq \|\widetilde{\mu}_{0,k}\|_{L^\infty}\leq k$,
which together with the assumption (H2) implies the separation property
$$
\|\varphi_{0,k}\|_{C(\overline{\Omega})}\leq 1-\delta_k,
$$
for some constant  $\delta_k\in (0,1)$ depending on $k$. This yields $\varPsi_0'(\varphi_{0,k})\in H^1(\Omega)$. Using the standard elliptic estimate, we further obtain $\varphi_{0,k}\in H^3(\Omega)$.

Next, we approximate the given velocity field $\bm{v}$ and the source term $\sigma$. Like in Section \ref{sec:CHS-app-pro},
we take a convergent sequence $\big\{\boldsymbol{v}^k\big\}\subset C^\infty_0((0, +\infty);D(\bm{S}))$ satisfying \eqref{vk-con}.
Concerning $\sigma$, for any $k\in \mathbb{Z}^+$, we denote by $\sigma^k$ the (unique) solution of the following linear elliptic problem
$$
\begin{cases}
\dfrac{1}{k} (-\Delta \sigma^k + \sigma^k) + \sigma^k =\sigma,\quad\ \ \text{in}\ \Omega, \\
\partial_{\bm{n}}\sigma^k =0,\qquad\qquad\qquad\qquad   \text{on}\ \partial \Omega.
\end{cases}
$$
The operator $-\Delta + I$ subject to the homogeneous Neumann boundary condition is an canonical
isomorphism from $H^1(\Omega)\to (H^1(\Omega))'$ due to the Riesz theorem. In view of the assumption on $\sigma$, we infer from the elliptic regularity theory that
\begin{align*}
\sigma^k \in L^2_{\mathrm{uloc}}([0,+\infty);H^3(\Omega)\cap H^2_N(\Omega))\cap L^\infty(0,+\infty;W^{2,6}(\Omega))\cap H^1_{\mathrm{uloc}}([0,+\infty);H^1(\Omega)).
\end{align*}
By interpolation, we also find $\sigma^k\in BUC([0,+\infty);H^2(\Omega))$, while from the assumption $\sigma(0)\in H^1(\Omega)$, we obtain $\sigma^k(0)\in H^3(\Omega)\cap H^2_N(\Omega)$. Applying Lemma \ref{sig-conv} with $V=H^1(\Omega)$, $H=L^2(\Omega)$, $\varepsilon^2=1/k$, $J=-\Delta + I$ for $u=\sigma$ and then $u=\partial_t \sigma$, we can conclude that
\begin{align*}
& \|\sigma^k\|_{L^2_{\mathrm{uloc}}([0,+\infty);H^1(\Omega))}\leq \|\sigma\|_{L^2_{\mathrm{uloc}}([0,+\infty);H^1(\Omega))},\\
&\|\partial_t\sigma^k\|_{L^2_{\mathrm{uloc}}([0,+\infty);(H^1(\Omega))')}\leq \|\partial_t\sigma\|_{L^2_{\mathrm{uloc}}([0,+\infty);(H^1(\Omega))')},\\
& \|\sigma^k(0)\|_{H^1}\leq \|\sigma(0)\|_{H^1},
\end{align*}
moreover, as $k\to +\infty$, it holds
\begin{align*}
 &\sigma^k\to \sigma \quad \text{in}\ \ L^2_{\mathrm{uloc}}([0,+\infty);H^1(\Omega)),\\
 &\partial_t\sigma^k\to \partial_t\sigma \quad \text{in}\ \ L^2_{\mathrm{uloc}}([0,+\infty);(H^1(\Omega))'),\\
 & \sigma^k(0)\to \sigma(0)\quad \text{in}\ \ H^1(\Omega).
\end{align*}
Testing the equation of $\sigma^k$ by $(\sigma^k)^q$, $q=1,3,5$, integrating over $\Omega$, using H\"{o}lder's inequality and Young's inequality, we obtain
\begin{align*}
\underbrace{\frac{1}{k} \int_\Omega \left(q(\sigma^k)^{q-1}|\nabla \sigma^k|^2+ (\sigma^k)^{q+1}\right)\,\mathrm{d}x}_{\geq 0}
+ \|\sigma^k\|_{L^{q+1}}^{q+1}
 & = \int_\Omega \sigma  (\sigma^k)^q\,\mathrm{d}x
 \\
 & \leq  \|\sigma\|_{L^{q+1}}\|\sigma^k\|_{L^{q+1}}^q \\
 & \leq  \frac{1}{q+1}\|\sigma\|_{L^{q+1}}^{q+1} +\frac{q}{q+1}\|\sigma^k\|_{L^{q+1}}^{q+1},
\end{align*}
which implies the uniform bounds
$$
\|\sigma^k\|_{L^\infty(0,+\infty;L^{q+1}(\Omega))}\leq \|\sigma\|_{L^\infty(0,+\infty;L^{q+1}(\Omega))},\quad q=1,3,5.
$$

For any given $\gamma\in(0,1)$ and integer $k\geq \widehat{k}$ (thus the approximated initial data are uniformly bounded with respect to $k$), we consider the following regularized problem
\begin{subequations}\label{rf2}
	\begin{alignat}{3}
	&\partial_t \varphi^{\gamma,k} +\bm{v}^k \cdot \nabla \varphi^{\gamma,k}=\Delta \mu^{\gamma,k} -\alpha(\overline{\varphi^{\gamma,k}}-c_0), &\qquad \textrm{in}& \ \Omega \times (0,+\infty),
\label{f2.c} \\
	&\mu^{\gamma,k}=- \Delta \varphi^{\gamma,k}+  \varPsi'(\varphi^{\gamma,k})-\chi \sigma^{k} \notag \\
    & \qquad \quad +\beta\mathcal{N}(\varphi^{\gamma,k}-\overline{\varphi^{\gamma,k}}) +\gamma\partial_t\varphi^{\gamma,k},
    &\qquad \textrm{in}& \ \Omega \times (0,+\infty),
\label{f2.d} \\
	& {\partial}_{\bm{n}}\varphi^{\gamma,k}={\partial}_{\bm{n}}\mu^{\gamma,k}=0,\qquad\qquad &\qquad \textrm{on}& \   \partial\Omega\times(0,+\infty),
\label{boundary123}\\
& \varphi^{\gamma,k}|_{t=0}=\varphi_{0,k}(x), \qquad &\qquad \textrm{in}&\ \Omega.
\label{ini0222}
	\end{alignat}
\end{subequations}
By extending the previous results \cite[Theorem A.1]{Gio2022} (for the single convective viscous Cahn--Hilliard equation) and \cite[Theorem 2.2]{H2} (for the coupled viscous Cahn--Hilliard--diffusion system without convection), we can establish the existence and uniqueness of a global strong solution $(\varphi^{\gamma,k}, \mu^{\gamma,k})$ to problem \eqref{rf2} on $[0,T]$ for any given $T\geq 1$, that is,

\bp
Assume that $\Omega$ is a bounded domain in $\mathbb{R}^3$ with boundary $\partial \Omega$ of class $C^3$, (H2)--(H3) are satisfied, $\gamma\in(0,1)$ and $k\in \mathbb{Z}^+$ is sufficiently large.
For any $T\geq 1$, the regularized problem \eqref{rf2}
admits a unique strong solution $(\varphi^{\gamma,k}, \mu^{\gamma,k})$ on $[0,T]$ such that
\begin{align*}
&\varphi^{\gamma,k} \in  L^\infty(0,T;H^3(\Omega)),
\\
&\partial_t\varphi^{\gamma,k}\in L^{\infty}(0,T;H^1(\Omega))\cap L^{2}(0,T;H^2(\Omega))\cap H^1(0,T;L^2(\Omega)),
\\
&\mu^{\gamma,k} \in   L^{\infty}(0,T;H^2(\Omega))\cap  H^1(0,T;L^2(\Omega)).
\end{align*}
The pair $(\varphi^{\gamma,k}, \mu^{\gamma,k})$ satisfies the equation \eqref{f2.c}--\eqref{f2.d} almost everywhere in $\Omega\times(0,T)$, the boundary condition \eqref{boundary123} almost everywhere on $\partial\Omega\times(0,T)$, and the initial condition \eqref{ini0222} in $\Omega$. Moreover, there exists some constant $\delta_{\gamma,k}\in (0,\delta_k)$ depending on $\gamma$ such that
\be
\|\varphi^{\gamma,k}(t)\|_{C(\overline{\Omega})} \leq 1-\delta_{\gamma,k}, \quad \forall\,t\in[0,T].
\label{vark-sep}
\ee
\ep

After the above preparations, we are now in a position to prove Proposition \ref{con-CHs-ws}-(2).

\begin{proof}[\textbf{Proof of Proposition \ref{con-CHs-ws}-(2)}] The uniqueness of strong solutions is a direct consequence of the uniqueness result for weak solutions established in Proposition \ref{con-CHs-ws}-(1). To establish the existence, we derive estimates for $(\varphi^{\gamma,k}, \mu^{\gamma,k})$ that are independent of the approximating parameters $\gamma$, $k$ and time.
\medskip

\textbf{First estimate}.
Analogously to \eqref{con-CHs-aver}, we have the mass relation
\be
\overline{\varphi^{\gamma,k}}(t)
= c_0 + e^{-\alpha t}\big(\overline{\varphi^k}(0)-c_0\big)
= c_0+e^{-\alpha t}\big(\overline{\varphi_{0,k}}-c_0\big),
\quad\forall\,t\in [0,T].
\label{aver}
\ee

\textbf{Second estimate}.
A similar argument for \eqref{con-CHs-es1} yields the following equality
\begin{align}
&\frac{\mathrm{d}}{\mathrm{d}t} \left(\frac12 \|\nabla \varphi^{\gamma,k}\|^2 + \int_\Omega \varPsi(\varphi^{\gamma,k})\,\mathrm{d}x
+ \frac{\beta}{2} \|\nabla \mathcal{N}(\varphi^{\gamma,k}-\overline{\varphi^{\gamma,k}})\|^2\right)
+ \|\nabla \mu^{\gamma,k}\|^2
+ \gamma\|\partial_t\varphi^{\gamma,k}\|^2
\notag \\
&\quad = -\int_\Omega (\bm{v}^k\cdot\nabla \varphi^{\gamma,k})(\mu^{\gamma,k}+\chi\sigma^k)\,\mathrm{d}x
-\chi\int_\Omega \nabla \mu^{\gamma,k}\cdot \nabla \sigma^k\,\mathrm{d}x
\notag\\
&\qquad -\alpha(\overline{\varphi^{\gamma,k}}-c_0)\int_\Omega (\mu^{\gamma,k}+ \chi \sigma^k)\,\mathrm{d}x,
\label{con-CHs-es7}
\end{align}
for almost all $t\in (0,T)$. We can estimate the right-hand side of \eqref{con-CHs-es7} like before, keeping in mind possible modifications due to the additional viscous term $\gamma \partial_t\varphi^{\gamma, k}$ in $\mu^{\gamma, k}$. The first two terms can be handled as in \eqref{con-CHs-es2}. For the last term,
recalling the estimates \cite[(3.15), (3.16)]{HW2022} and using the bound $\|\varphi^{\gamma, k}\|_{L^\infty}\leq 1$ (cf. \eqref{vark-sep}), we infer that
\begin{align}
\left|\int_\Omega \mu^{\gamma,k}\,\mathrm{d}x\right|
& \le  C\big(\|\nabla \mu^{\gamma,k}\| + \|\sigma^{k}- \overline{\sigma^{k}}\|+ \|\varphi^{\gamma,k}- \overline{\varphi^{\gamma,k}}\|+\gamma \|\partial_t\varphi^{\gamma,k}\| \big)\| \varphi^{\gamma,k}-\overline{\varphi^{\gamma,k}}\|+C
\notag\\
& \le  C\big(\|\nabla \mu^{\gamma,k}\|+\|\nabla \sigma^k\| + \gamma \|\partial_t\varphi^{\gamma,k}\|+ 1 \big).
\label{q2c}
\end{align}
Then from \eqref{con-CHs-es7} we can deduce that
\begin{align}
&\frac{\mathrm{d}}{\mathrm{d}t} \left(\frac12\|\nabla \varphi^{\gamma,k}\|^2
+ \int_\Omega \varPsi(\varphi^{\gamma,k})\,\mathrm{d}x
+ \frac{\beta}{2} \|\nabla \mathcal{N}(\varphi^{\gamma,k}-\overline{\varphi^{\gamma,k}})\|^2+C_7\right) + \frac14\|\nabla \mu^{\gamma,k}\|^2 + \frac{\gamma}{4}\|\partial_t\varphi ^{\gamma,k}\|^2 \notag \\
&\qquad+ C_{10}\left(\frac12\|\nabla \varphi^{\gamma,k}\|^2
+ \int_\Omega \varPsi(\varphi^{\gamma,k}) \,\mathrm{d}x
+ \frac{\beta}{2} \|\nabla \mathcal{N}(\varphi^{\gamma,k} -\overline{\varphi^{\gamma,k}})\|^2+C_7\right)
\notag \\
&\quad \leq C_{11}(\|\bm{v}^k\|^2+ \| \sigma^k\|_{H^1}^2+1),
\label{con-CHs-es8}
\end{align}
for some positive constants $C_{10}, C_{11}$ that are independent of $\gamma$, $k$ and time. Here, the constant $C_7>0$ is the same as in \eqref{low-bd-E}. Based on the construction of approximate data $\bm{v}^k$, $\sigma^k$, and $\varphi_{0,k}$, the differential inequality \eqref{con-CHs-es8} combined with Lemma \ref{GronW} yields the following estimate:
\begin{align}
& \sup_{t\in [0,T]} \left(\frac12\|\nabla \varphi^{\gamma,k}(t)\|^2
+ \int_\Omega \varPsi(\varphi^{\gamma,k}(t))\,\mathrm{d}x
+ \frac{\beta}{2} \|\nabla \mathcal{N}(\varphi^{\gamma,k}-\overline{\varphi^{\gamma,k}})(t)\|^2\right) \notag \\
&\quad +  \sup_{t\in [0,T-1]} \int_{t}^{t+1} \|\nabla \mu^{\gamma,k}(s)\|^2 \,\mathrm{d}s +   \sup_{t\in[0,T-1]} \int_{t}^{t+1} \gamma \|\partial_t\varphi^{\gamma,k}(s)\|^2 \,\mathrm{d}s
 \leq  C,
\label{low-es2-app1b}
\end{align}
where the positive constant $C$ depends on $\Omega$, the coefficients of the system, $\max_{r\in[-1,1]}|\varPsi(r)|$, $\overline{\varphi_{0}}$, $\|\varphi_{0}\|_{H^1}$, $\|\bm{v}\|_{L^2(0,+\infty,\bm{L}^2(\Omega))}$, $\sup_{t\geq 0}\int_t^{t+1}\|\sigma(s)\|_{H^1(\Omega)}^2\,\mathrm{d}s$, but it is independent of $\gamma$, $k$ and $T$.
\medskip

\textbf{Third estimate}.
Consider the following elliptic problem with singular nonlinearity:
\begin{equation}
\notag
\begin{cases}
-\Delta \varphi^{\gamma,k}+\varPsi_0'(\varphi^{\gamma,k})=\mu^{\gamma,k} +\theta_0\varphi^{\gamma,k} +\chi\sigma^{k} -\gamma\partial_t\varphi^{\gamma,k},\quad &\text{ in }\Omega,
\\
\partial_{\bm{n}} \varphi^{\gamma,k}=0,
\quad &\text{ on }\partial \Omega.
\end{cases}
\end{equation}
According to the argument in \cite[Section 4]{GGM2017} (see also \cite{A2009,CG}), it holds
\begin{align}
 & \|\varphi^{\gamma,k}\|_{W^{2,q}} +\|\varPsi_0'(\varphi^{\gamma,k})\|_{L^q}
 \notag \\
 &\quad  \leq  C\big(\|\mu^{\gamma,k}\|_{L^q}
 +\|\varphi^{\gamma,k}\|_{L^q} +\|\sigma^{k}\|_{L^q} +\gamma\|\partial_t\varphi^{\gamma,k}\|_{L^q}\big),
 \quad \forall\, q\in [2,6],
\label{q12}
\end{align}
where $C>0$ depends on $\Omega$ and $q$.
From \eqref{vark-sep}, \eqref{q2c}, \eqref{low-es2-app1b}, \eqref{q12}, the Sobolev embedding theorem and the Poincar\'{e}--Wirtinger inequality, we obtain
 \begin{align}
 & \|\varphi^{\gamma,k}(t)\|_{W^{2,q}} +\|\varPsi_0'(\varphi^{\gamma,k}(t))\|_{L^q}
 \notag \\
 &\quad  \leq C\left(\|\nabla\mu^{\gamma,k}(t)\|+ \left|\int_\Omega \mu^{\gamma,k}(t)\,\mathrm{d}x\right|
 +\| \sigma^{k}(t) \|_{L^q} +\gamma\|\partial_t\varphi^{\gamma,k}(t)\|_{L^q}+1\right)
 \notag\\
 &\quad  \leq C\big(\|\nabla\mu^{\gamma,k}(t)\|
 +\|\sigma^{k}(t)\|_{L^q} +\gamma\|\partial_t\varphi^{\gamma,k}(t)\|_{L^q}+1\big),
 \quad \forall\, q\in [2,6],
\label{q13}
\end{align}
for almost all $t\in [0,T-1]$.
The constant $C>0$ is independent of $\gamma$, $k$ and time. Taking $q=2$, from \eqref{low-es2-app1b} and \eqref{q13}, we find
\begin{align*}
   \sup_{t\in[0,T-1]}\int_{t}^{t+1} \left( \|\varphi^{\gamma,k}(s)\|_{H^2}^2  + \|\varPsi_0'(\varphi^{\gamma,k}(s))\|^2\right)\,\mathrm{d}s \leq C.
\end{align*}

\textbf{Fourth estimate}.
Testing \eqref{f2.c} by $\partial_{t} \mu^{\gamma,k}$ and integrating over $\Omega$, after integration by parts, we get
\begin{align*}
\frac{1}{2}\frac{\mathrm{d}}{\mathrm{d}t} \|\nabla\mu^{\gamma,k}\|^2
+ \int_\Omega \partial_t\varphi^{\gamma,k}\partial_{t} \mu^{\gamma,k}\,\mathrm{d}x
+ \int_\Omega (\bm{v}^k\cdot\nabla \varphi^{\gamma,k})\partial_{t} \mu^{\gamma,k}\,\mathrm{d}x
+\alpha(\overline{\varphi^{\gamma,k}}-c_0) \int_\Omega \partial_{t} \mu^{\gamma,k}\,\mathrm{d}x=0.
\end{align*}
By the definition of $\mu^{\gamma,k}$, a direct computation yields
\begin{align*}
\int_\Omega \partial_t\varphi^{\gamma,k}\partial_{t} \mu^{\gamma,k}\,\mathrm{d}x
& = \frac{\gamma}{2}\frac{\mathrm{d}}{\mathrm{d}t}\| \partial_{t} \varphi^{\gamma,k}\|^2 +
\|\nabla\partial_t\varphi^{\gamma,k}\|^2 +
\int_\Omega \varPsi_0''(\varphi^{\gamma,k})|\partial_t\varphi^{\gamma,k}|^2\,\mathrm{d}x
-\theta_0\|\partial_t\varphi^{\gamma,k}\|^2
\\
&\quad
-\chi\int_\Omega \partial_{t} \varphi^{\gamma,k}\partial_t\sigma^{k}  \,\mathrm{d}x
+\beta \int_\Omega \partial_{t} \varphi^{\gamma,k} \mathcal{N}\big(\partial_{t} \varphi^{\gamma,k} -\overline{\partial_{t} \varphi^{\gamma,k}}\big) \,\mathrm{d}x.
\end{align*}
Next, from the construction of $\bm{v}^k$, $\sigma^k$ and $(\varphi^{\gamma,k},\mu^{\gamma,k})$, using integration by parts, we observe that
\begin{align}
&\int_{\Omega} (\boldsymbol{v}^k \cdot \nabla \varphi^{\gamma,k}) \partial_t \mu^{\gamma,k} \,\mathrm{d} x\notag\\
&\quad =  \int_{\Omega} (\boldsymbol{v}^k \cdot \nabla \varphi^{\gamma,k}) \big(\gamma \partial_t^2 \varphi^{\gamma,k}-\Delta \partial_t \varphi^{\gamma,k}+\varPsi_0''(\varphi^{\gamma,k}) \partial_t \varphi^{\gamma,k}-\theta_0 \partial_t \varphi^{\gamma,k}-\chi\partial_t\sigma^{k}\big) \, \mathrm{d} x
\notag\\
&\qquad + \beta \int_{\Omega} (\boldsymbol{v}^k \cdot \nabla \varphi^{\gamma,k}) \mathcal{N}\big(\partial_{t} \varphi^{\gamma,k} -\overline{\partial_{t} \varphi^{\gamma,k}}\big)\, \mathrm{d} x
\notag\\
&\quad =  \frac{\mathrm{d}}{\mathrm{d} t}\left(\gamma \int_{\Omega} (\boldsymbol{v}^{k} \cdot \nabla \varphi^{\gamma,k}) \partial_t \varphi^{\gamma,k}\, \mathrm{d} x\right)
+\gamma \int_{\Omega} (\partial_t \boldsymbol{v}^k \cdot \nabla \partial_t \varphi^{\gamma,k}) \varphi^{\gamma,k}  \,\mathrm{d} x
\notag\\
&\qquad +\int_{\Omega}\big(\nabla^T \boldsymbol{v}^k \nabla \varphi^{\gamma,k}\big) \cdot \nabla \partial_t \varphi^{\gamma,k}  \,\mathrm{d} x
 +\int_{\Omega}\big(\nabla^2 \varphi^{\gamma,k} \boldsymbol{v}^k\big) \cdot \nabla \partial_t \varphi^{\gamma,k} \, \mathrm{d} x
 \notag\\
 &\qquad -\int_{\Omega} (\boldsymbol{v}^k \cdot \nabla \partial_t \varphi^{\gamma,k}) \varPsi_0'(\varphi^{\gamma,k}) \, \mathrm{d} x
 +\theta_0 \int_{\Omega} (\boldsymbol{v}^k \cdot \nabla \partial_t \varphi^{\gamma,k}) \varphi^{\gamma,k} \, \mathrm{d} x
 \notag\\
 &\qquad -\chi \int_{\Omega} (\boldsymbol{v}^k \cdot  \nabla \varphi^{\gamma,k}) \partial_t\sigma^{k} \, \mathrm{d} x
 - \beta \int_{\Omega} \big[\boldsymbol{v}^{k}\cdot \nabla \mathcal{N}\big(\partial_{t} \varphi^{\gamma,k}
 -\overline{\partial_{t} \varphi^{\gamma,k}}\big)\big] \varphi^{\gamma,k} \,\mathrm{d}x.
 \notag
\end{align}
Finally, using \eqref{aver}, we can get
\begin{align*}
\alpha\big(\overline{\varphi^{\gamma,k}}-c_0\big) \int_\Omega \partial_{t} \mu^{\gamma,k}\,\mathrm{d}x
&=   \frac{\mathrm{d}}{\mathrm{d}t} \left(\alpha\big(\overline{\varphi^{\gamma,k}}-c_0\big)  \int_\Omega \mu^{\gamma,k}\,\mathrm{d}x \right)
- \alpha \left(\frac{\mathrm{d}}{\mathrm{d}t}\overline{\varphi^{\gamma,k}} \right) \int_\Omega \mu^{\gamma,k}\,\mathrm{d}x
\notag\\
&=  \frac{\mathrm{d}}{\mathrm{d}t} \left(\alpha\big(\overline{\varphi^{\gamma,k}}-c_0\big)  \int_\Omega \mu^{\gamma,k}\,\mathrm{d}x \right)
 + \alpha^2 \big(\overline{\varphi^{\gamma,k}}-c_0\big) \int_\Omega \mu^{\gamma,k}\,\mathrm{d}x.
\end{align*}
Combining the above identities, we arrive at the following differential equality:
\begin{align}
&\frac{\mathrm{d}}{\mathrm{d}t} \left(\frac{1}{2}\|\nabla\mu^{\gamma,k}\|^2
+\frac{\gamma}{2}\|\partial_t \varphi^{\gamma,k}\|^2
+\gamma \int_{\Omega} (\boldsymbol{v}^k \cdot \nabla \varphi^{\gamma,k}) \partial_t \varphi^{\gamma,k}  \,\mathrm{d} x
+ \alpha\big(\overline{\varphi^{\gamma,k}}-c_0\big) \int_\Omega   \mu^{\gamma,k}\,\mathrm{d}x
\right)
\notag\\
&\qquad +\|\nabla\partial_t\varphi^{\gamma,k}\|^2 +\int_\Omega \varPsi_0''(\varphi^{\gamma,k})|\partial_t\varphi^{\gamma,k}|^2\,\mathrm{d}x
\notag \\
&\quad = \theta_0\|\partial_t\varphi^{\gamma,k}\|^2
  -\gamma \int_{\Omega} (\partial_t \boldsymbol{v}^k \cdot \nabla \partial_t \varphi^{\gamma,k}) \varphi^{\gamma,k}  \,\mathrm{d} x
-\int_{\Omega}\big(\nabla^T \boldsymbol{v}^k \nabla \varphi^{\gamma,k}\big) \cdot \nabla \partial_t \varphi^{\gamma,k}  \,\mathrm{d} x
\notag\\
&\qquad -\int_{\Omega}\big(\nabla^2 \varphi^{\gamma,k} \boldsymbol{v}^k\big) \cdot \nabla \partial_t \varphi^{\gamma,k} \, \mathrm{d} x
+\int_{\Omega} (\boldsymbol{v}^{k} \cdot \nabla \partial_t \varphi^{\gamma,k}) \varPsi'_0(\varphi^{\gamma,k}) \, \mathrm{d} x
\notag\\
&\qquad
-\theta_0 \int_{\Omega} (\boldsymbol{v}^k \cdot \nabla \partial_t \varphi^{\gamma,k}) \varphi^{\gamma,k} \, \mathrm{d} x
+\chi\int_\Omega \partial_{t} \varphi^{\gamma,k}\partial_t\sigma^{k}  \,\mathrm{d}x
\notag\\
&\qquad
-\beta \int_\Omega \partial_{t} \varphi^{\gamma,k}\mathcal{N}\big(\partial_{t} \varphi^{\gamma,k} -\overline{\partial_{t} \varphi^{\gamma,k}}\big) \,\mathrm{d}x
-\alpha^2 \big(\overline{\varphi^{\gamma,k}}(t)-c_0\big) \int_\Omega \mu^{\gamma,k}\,\mathrm{d}x
\notag\\
&\qquad + \chi \int_{\Omega} (\boldsymbol{v}^k \cdot \nabla \varphi^{\gamma,k}) \partial_t \sigma^{k} \, \mathrm{d} x
+ \beta \int_{\Omega}  \big[\boldsymbol{v}^k \cdot \nabla \mathcal{N}\big(\partial_{t} \varphi^{\gamma,k} -\overline{\partial_{t} \varphi^{\gamma,k}}\big) \big] \varphi^{\gamma,k} \,\mathrm{d}x
\notag\\
&\quad =: \sum_{j=1}^{11} I_j.
\label{high-mu}
\end{align}
The first six terms $I_1,\cdots, I_6$ on the right-hand side of \eqref{high-mu} can be estimated as in \cite[Section 2]{A2022} with suitable modifications. To this end, using \eqref{aver}, \eqref{low-es2-app1b} and the Poincar\'{e}--Wirtinger inequality, we infer that
\begin{align*}
I_1 &\leq 2\theta_0\big(\|\partial_t\varphi^{\gamma,k}
- \overline{\partial_t\varphi^{\gamma,k}}\|^2
+ |\Omega| |\overline{\partial_t\varphi^{\gamma,k}}|^2\big)
\\
&\leq C\|\partial_t\varphi^{\gamma,k}
- \overline{\partial_t\varphi^{\gamma,k}}\|_{H^1} \|\partial_t\varphi^{\gamma,k}
- \overline{\partial_t\varphi^{\gamma,k}}\|_{(H^1)'}
+ C|\overline{\partial_t\varphi^{\gamma,k}}|^2
\\
&\leq C\|\nabla \partial_t\varphi^{\gamma,k}\|\big(
\|\nabla \mu^{\gamma,k}\|+ \|\bm{v}^k  \varphi^{\gamma,k}\|\big)
+ C|\overline{\partial_t\varphi^{\gamma,k}}|^2
\\
&\leq  \frac{1}{18} \|\nabla \partial_t\varphi^{\gamma,k}\|^2
+ C\|\nabla \mu^{\gamma,k}\|^2+C\|\bm{v}^k\|^2\|  \varphi^{\gamma,k}\|_{L^\infty}^2
+ C|\overline{\partial_t\varphi^{\gamma,k}}|^2
\\
&\leq  \frac{1}{18} \|\nabla \partial_t\varphi^{\gamma,k}\|^2
+ C \|\nabla \mu^{\gamma,k}\|^2
+ C\alpha^2|\overline{\varphi^{\gamma,k}}-c_0|^2
+ C\|\bm{v}^k\|^2.
\end{align*}
Using the $L^\infty$-bound $\|\varphi^{\gamma,k}(t)\|_{L^\infty(0,T;L^\infty(\Omega))}\leq 1$ again, we have
\begin{align*}
I_2
\leq \gamma\|\partial_t \boldsymbol{v}^k\|\|\nabla \partial_t \varphi^{\gamma,k}\|\|\varphi^{\gamma,k}\|_{L^{\infty}}
\leq \frac{1}{18}\|\nabla \partial_t \varphi^{\gamma,k}\|^2
 +\gamma^2 C\|\partial_t \boldsymbol{v}^{k}\|^2,
\end{align*}
and
\begin{align*}
I_{6}
&\leq \theta_0 \|\boldsymbol{v}^k\| \|\nabla \partial_t \varphi^{\gamma,k} \|\|\varphi^{\gamma,k}\|_{L^{\infty}}
\leq \frac{1}{18} \|\nabla \partial_t \varphi^{\gamma,k} \|^2 +C\|\boldsymbol{v}^k\|^2.
\end{align*}
Then exploiting the Gagliardo--Nirenberg inequality and the Sobolev embedding theorem, we infer from \eqref{aver}, \eqref{low-es2-app1b}, \eqref{q12} and \eqref{q13} that
\begin{align*}
I_3 & \leq\|\nabla \boldsymbol{v}^k\|\|\nabla \varphi^{\gamma,k}\|_{L^{\infty}}\|\nabla \partial_t \varphi^{\gamma,k}\|
\\
& \leq \frac{1}{36}\|\nabla \partial_t \varphi^{\gamma,k}\|^2
+C\|\nabla \boldsymbol{v}^k\|^2\|\varphi^{\gamma,k}\|_{W^{2,4}}^2
\\
& \leq \frac{1}{36}\|\nabla \partial_t \varphi^{\gamma,k}\|^2
+ C\|\nabla \boldsymbol{v}^k\|^2\big(1+\|\nabla\mu^{\gamma,k}\|^2 +\| \sigma^{k} \|_{L^4}^2
+\gamma^2\|\partial_t\varphi^{\gamma,k}\|^2_{L^4}\big)
\\
& \leq \frac{1}{36}\|\nabla \partial_t \varphi^{\gamma,k}\|^2
+\gamma^2 C\|\nabla \boldsymbol{v}^k\|^2\|\partial_t \varphi^{\gamma,k}\|^{\frac{1}{2}}\big(\|\nabla \partial_t \varphi^{\gamma,k}\|
+|\overline{\partial_t \varphi^{\gamma,k}}|\big)^{\frac{3}{2}}
\\
& \quad+C\|\nabla \boldsymbol{v}^k\|^2\big(1
+\|\nabla \mu^{\gamma,k}\|^2+\| \sigma^{k} \|_{L^6}^2\big)
\\
&\leq  \frac{1}{18}\|\nabla \partial_t \varphi^{\gamma,k}\|^2
+\gamma^8 C\|\nabla \boldsymbol{v}^k\|^8\|\partial_t \varphi^{\gamma,k}\|^2
+C\|\nabla \boldsymbol{v}^k\|^2 \|\nabla \mu^{\gamma,k}\|^2
\\
&\quad + C\|\nabla \boldsymbol{v}^k\|^2\big(1
+ \| \sigma^{k} \|_{L^6}^2\big) + C\alpha^2|\overline{\varphi^{\gamma,k}}-c_0|^2,
\end{align*}
and
\begin{align*}
I_4+I_5 & \leq \big( \|\varphi^{\gamma,k}\|_{W^{2,3}}+ \|\varPsi_0'(\varphi^{\gamma,k}) \|_{L^3}\big) \|\boldsymbol{v}^k\|_{\bm{L}^6} \|\nabla \partial_t \varphi^{\gamma,k} \|
\\
& \leq C\big(1+\|\nabla \mu^{\gamma,k}\| + \|\sigma^{k}\|_{L^3}
+\gamma \|\partial_t \varphi^{\gamma,k} \|_{L^3} \big)\|\nabla \boldsymbol{v}^k\| \|\nabla \partial_t \varphi^{\gamma,k} \|
\\
& \leq \frac{1}{36} \|\nabla \partial_t \varphi^{\gamma,k} \|^2
+\gamma^2 C \|\partial_t \varphi^{\gamma,k} \| \big(\|\nabla \partial_t \varphi^{\gamma,k} \| +|\overline{\partial_t \varphi^{\gamma,k}}|\big)\|\nabla \boldsymbol{v}^k\|^2
\\
&\quad +C\|\nabla \boldsymbol{v}^k\|^2\big(1+\|\nabla \mu^{\gamma,k}\|^2 + \| \sigma^{k} \|_{L^6}^2\big)
\\
& \leq \frac{1}{18} \|\nabla \partial_t \varphi^{\gamma,k} \|^2
+\gamma^4 C \|\nabla \boldsymbol{v}^k\|^4\|\partial_t \varphi^{\gamma,k} \|^2
+C\|\nabla \boldsymbol{v}^k\|^2 \|\nabla \mu^{\gamma,k}\|^2
\\
&\quad + C\|\nabla \boldsymbol{v}^k\|^2\big(1
+ \| \sigma^{k} \|_{L^6}^2\big)
+ C\alpha^2|\overline{\varphi^{\gamma,k}}-c_0|^2.
\end{align*}
Next, using \eqref{aver}, \eqref{low-es2-app1b}, H\"{o}lder's inequality and the Poincar\'{e}--Wirtinger inequality, we get
\begin{align*}
I_7&= \chi\int_\Omega \big(\partial_{t} \varphi^{\gamma,k} -\overline{\partial_{t} \varphi^{\gamma,k}}\big)\partial_t\sigma^{k} \,\mathrm{d}x + \chi \overline{\partial_{t} \varphi^{\gamma,k}} \int_\Omega   \partial_t\sigma^{k} \,\mathrm{d}x
\notag\\
&\leq C\|\partial_t\varphi^{\gamma,k} -\overline{\partial_{t} \varphi^{\gamma,k}}\|_{H^1} \|\partial_t\sigma^{k}\|_{(H^1)'}
+ |\chi||\overline{\partial_{t} \varphi^{\gamma,k}}| \|1\|_{H^1}\|\partial_t\sigma^{k}\|_{(H^1)'}
\notag\\
&\leq C\|\nabla \partial_t\varphi^{\gamma,k}\| \|\partial_t\sigma^{k}\|_{(H^1)'}
+ C\|\partial_t\sigma^{k}\|_{(H^1)'}
\\
&\leq \frac{1}{18} \|\nabla \partial_t\varphi^{\gamma,k}\|^2
+ C(1+\|\partial_t\sigma^{k}\|_{(H^1)'}^2).
\end{align*}
In a similar manner, we can treat $I_8$ as follows
\begin{align*}
I_8& =-\beta \int_\Omega \big(\partial_{t} \varphi^{\gamma,k} - \overline{\partial_{t} \varphi^{\gamma,k}}\big) \mathcal{N}\big(\partial_{t} \varphi^{\gamma,k} -\overline{\partial_{t} \varphi^{\gamma,k}}\big)\,\mathrm{d}x
\\
&\leq |\beta| \|\partial_{t} \varphi^{\gamma,k} - \overline{\partial_{t} \varphi^{\gamma,k}}\| \|\mathcal{N}(\partial_{t} \varphi^{\gamma,k} -\overline{\partial_{t}\varphi^{\gamma,k}}) \|
\\
&\leq C\|\nabla \partial_t \varphi^{\gamma,k}\|\big(\|\mu^{\gamma,k}-\overline{\mu^{\gamma,k}}\|+ \|\bm{v}^k \cdot \nabla \varphi^{\gamma,k}\|_{L^\frac65}\big)
\\
&\leq \frac{1}{18} \|\nabla \partial_t\varphi^{\gamma,k}\|^2
+ C\|\nabla \mu^{\gamma,k}\|^2 + C\|\nabla \bm{v}^k\|^2.
\end{align*}
Recalling \eqref{q2c}, \eqref{low-es2-app1b}, \eqref{q12}, we estimate $I_9$ by
\begin{align*}
I_9 & \leq \alpha^2 |\overline{\varphi^{\gamma,k}}-c_0|\left|\int_\Omega \mu^{\gamma,k}\,\mathrm{d}x\right|
\\
& \leq   C\alpha^2|\overline{\varphi^{\gamma,k}}-c_0|\big(1+\|\nabla \mu^{\gamma,k}\|  +\|\sigma^{k}\|  +\gamma \|\partial_t \varphi^{\gamma,k} \| \big)
\\
&\leq  \|\nabla \mu^{\gamma,k}\|^2
+ \gamma^2  \|\partial_t \varphi^{\gamma,k} \|^2
+ C (1+\|\sigma^{k}\|^2)
+ C\alpha^4 |\overline{\varphi^{\gamma,k}}-c_0|^2.
\end{align*}
Applying a similar argument for $I_3$ and $I_7$, we get
\begin{align}
I_{10}
&\le C\|\boldsymbol{v}^k \cdot \nabla \varphi^{\gamma,k}\|_{H^1}\|\partial_t \sigma^{k} \|_{(H^1)'}
\notag\\
&\le C\|\nabla\boldsymbol{v}^k\|^2 \| \nabla \varphi^{\gamma,k}\|_{\bm{L}^\infty}^2
+ C\|\boldsymbol{v}^{k}\|_{\bm{L}^6}^2\|\varphi^{\gamma,k}\|_{W^{2,3}}^2
+ C\|\partial_t \sigma^{k} \|_{(H^1)'}^2\notag\\
&\leq C\|\nabla\boldsymbol{v}^k\|^2 \| \varphi^{\gamma,k}\|_{W^{2,4}}^2
+ C \|\partial_t \sigma^{k} \|_{(H^1)'}^2
\notag\\
&\leq  \frac{1}{18}\|\nabla \partial_t \varphi^{\gamma,k}\|^2
+\gamma^8 C\|\nabla \boldsymbol{v}^k\|^8\|\partial_t \varphi^{\gamma,k}\|^2
+C\|\nabla \boldsymbol{v}^k\|^2 \|\nabla \mu^{\gamma,k}\|^2
\notag \\
&\quad + C\|\nabla \boldsymbol{v}^k\|^2\big(1
+ \| \sigma^{k} \|_{L^6}^2\big) + C\alpha^2|\overline{\varphi^{\gamma,k}}-c_0|^2
+ C \|\partial_t \sigma^{k} \|_{(H^1)'}^2,
\notag
\end{align}
while a similar argument for $I_6$ yields
\begin{align*}
I_{11} &\leq C\|\boldsymbol{v}^k \|\| \nabla \mathcal{N}(\partial_{t} \varphi^{\gamma,k} -\overline{\partial_{t} \varphi^{\gamma,k}})\|\| \varphi^{\gamma,k}\|_{L^\infty}\\
&\leq \frac{1}{18} \|\nabla \partial_t \varphi^{\gamma,k} \|^2 +C\|\boldsymbol{v}^k\|^2.
\end{align*}
Define
\begin{align*}
\mathcal{G}^{\gamma,k}(t)
&= \frac{1}{2}\|\nabla \mu^{\gamma,k}(t)\|^2
+\frac{\gamma}{2} \|\partial_t \varphi^{\gamma,k} (t)\|^2
+\gamma \int_{\Omega} \big(\boldsymbol{v}^k(t) \cdot \nabla \varphi^{\gamma,k}(t)\big) \partial_t \varphi^{\gamma,k}(t) \,\mathrm{d} x
\\
&\quad
+ \alpha(\overline{\varphi^{\gamma,k}(t)}-c_0) \int_\Omega   \mu^{\gamma,k}(t)\,\mathrm{d}x.
\end{align*}
We need to control the last two terms in $\mathcal{G}^{\gamma,k}$ without a definite sign.
It follows from \eqref{low-es2-app1b} that
\begin{align}
\left|\gamma \int_{\Omega} (\boldsymbol{v}^k \cdot \nabla \varphi^{\gamma,k}) \partial_t \varphi^{\gamma,k} \,\mathrm{d} x\right|
& \leq \gamma\|\boldsymbol{v}^k\|_{\bm{L}^{\infty}}\|\nabla \varphi^{\gamma,k}\| \|\partial_t \varphi^{\gamma,k} \|
\notag \\
& \leq \frac{\gamma}{8} \|\partial_t \varphi^{\gamma,k} \|^2
    +\gamma C_{12}\|\boldsymbol{v}^k\|_{\bm{L}^{\infty}}^2.
    \label{cont-1}
\end{align}
On the other hand, for $\gamma\in (0,1)$, a similar argument to $I_9$ yields
\begin{align}
& \alpha |\overline{\varphi^{\gamma,k}}-c_0|\left|\int_\Omega \mu^{\gamma,k}\,\mathrm{d}x\right|
\notag\\
&\quad  \leq   C\alpha |\overline{\varphi^{\gamma,k}}-c_0|\big(1+\|\nabla \mu^{\gamma,k}\|  +\|\sigma^{k}\|  +\gamma \|\partial_t \varphi^{\gamma,k} \| \big)
\notag \\
&\quad \leq \frac14\|\nabla \mu^{\gamma,k}\|^2 +
\frac{\gamma}{8} \|\partial_t \varphi^{\gamma,k} \|^2
+ C_{13}(1+\|\sigma^k\|^2)+ C_{13}\alpha^2 |\overline{\varphi^{\gamma,k}}-c_0|^2.
\label{cont-2}
\end{align}
In \eqref{cont-1}, \eqref{cont-2}, the positive constants $C_{12}$, $C_{13}$ are independent of $\gamma$, $k$ and time.
Recalling the construction of $\bm{v}^k$, $\sigma^k$, we choose the constants
\begin{align*}
C_{14}(k,T) & = C_{12}\sup_{t\in[0,T]}\|\bm{v}^k(t)\|_{\bm{L}^\infty}, \\
C_{15} & = C_{13}(1+\|\sigma\|_{L^\infty(0,+\infty;L^2(\Omega))}^2) + 4C_{13}\alpha^2,
\end{align*}
so that
\begin{align}
\gamma C_{12}\|\boldsymbol{v}^k(t)\|_{\bm{L}^{\infty}}^2
 + C_{13}(1+\|\sigma^k(t)\|^2)+ C_{13}\alpha^2 |\overline{\varphi^{\gamma,k}(t)}-c_0|^2
 \leq \gamma C_{14}(k,T)+ C_{15},
 \notag
\end{align}
for all $t\in [0,T]$. We note that the constant $C_{14}\geq 0$ can depend on $k$ and $T$, while $C_{15}>0$ is independent of $\gamma$, $k$ and time. Define
\begin{align*}
 \widehat{\mathcal{G}^{\gamma,k}}(t)
 & = \mathcal{G}^{\gamma,k}(t) + \gamma C_{14}(k,T)+ C_{15},
 \end{align*}
 which satisfies
\begin{align*}
&\widehat{\mathcal{G}^{\gamma,k}}(t)
\geq \frac{1}{4}\|\nabla \mu^{\gamma,k}(t)\|^2
+\frac{1}{4} \gamma\|\partial_t \varphi^{\gamma,k} (t)\|^2,\\
&\widehat{\mathcal{G}^{\gamma,k}}(t) \leq
\frac{3}{4}\|\nabla \mu^{\gamma,k}(t)\|^2
+\frac{3}{4} \gamma \|\partial_t \varphi^{\gamma,k} (t)\|^2+ 2\gamma C_{14}(k,T)+ 2C_{15},
\end{align*}
for all $t\in[0,T]$.
Combining the above estimates for $I_1,\cdots,I_{11}$, we can deduce from \eqref{high-mu} the following differential inequality:
\begin{align}
& \frac{\mathrm{d}}{\mathrm{d} t}\widehat{\mathcal{G}^{\gamma,k}}
+ \frac12 \|\nabla\partial_t\varphi^{\gamma,k}\|^2  \leq \mathcal{H}_1^{\gamma,k}\widehat{\mathcal{G}^{\gamma,k}}
+\mathcal{H}^{\gamma,k}_2,\
\label{apphigh}
\end{align}
with
 \begin{align*}
 \mathcal{H}^{\gamma,k}_1(t)
 & =C\big(\|\nabla \boldsymbol{v}^k(t)\|^2  + \gamma^3 \|\nabla \boldsymbol{v}^k(t)\|^4 +\gamma^7\|\nabla \boldsymbol{v}^k(t)\|^8\big),
 \\
 \mathcal{H}^{\gamma,k}_2(t)
 & =C(\|\nabla \mu^{\gamma,k}(t)\|^2
+ \gamma^2 \|\partial_t \varphi^{\gamma,k} (t)\|^2)
+ C \gamma^2  \|\partial_t \boldsymbol{v}^k(t) \|^2
 \\
 &\quad  + C(1+\|\nabla \bm{v}^k(t)\|^2)(1+\|\sigma^k(t)\|_{L^6}^2) + C \|\partial_t\sigma^k\|_{(H^1)'}^2.
\end{align*}
Here, the generic constant $C>0$ is independent of $\gamma$, $k$ and time.
From the construction of $\boldsymbol{v}^k$, $\sigma^k$ and the lower order estimate \eqref{low-es2-app1b}, for any given $T\in [1,+\infty)$, we infer that the non-negative functions $\widehat{\mathcal{G}^{\gamma,k}}$, $\mathcal{H}^{\gamma,k}_1$, and $\mathcal{H}^{\gamma,k}_2$ belong to $L^1(0, T)$ with upper bounds possibly depending on $k$, $T$, but not on $\gamma$ due to the fact $\gamma \in (0,1)$.

Let us first consider the estimate on the time interval $[0,1]$. Applying Gronwall's lemma, we can deduce from \eqref{apphigh} that
\begin{align*}
	& \sup_{t\in [0,1]} \widehat{\mathcal{G}^{\gamma,k}}(t)
    \leq \left(\widehat{\mathcal{G}^{\gamma,k}}(0)
+\int_0^1 \mathcal{H}^{\gamma,k}_2(s)\, \mathrm{d} s\right) \exp \left(\int_0^1 \mathcal{H}^{\gamma,k}_1(s)\, \mathrm{d} s\right),
\end{align*}
with the initial datum (noticing that $\bm{v}^k(0)=\mathbf{0}$)
\begin{align*}
\widehat{\mathcal{G}^{\gamma,k}}(0)
& = \frac12 \|\nabla \mu^{\gamma,k}(0)\|^2
+\frac{\gamma}{2} \|\partial_t \varphi^{\gamma,k}(0) \|^2
+ \alpha(\overline{\varphi_{0,k}}-c_0)
\int_\Omega   \mu^{\gamma,k}(0)\,\mathrm{d}x\\
&\quad + \gamma C_{14}(k,T)+ C_{15}.
\end{align*}
As a consequence, we get
\begin{align}
	& \sup_{t\in[0,1]}\left(\|\nabla \mu^{\gamma,k}(t)\|^2
+\gamma \|\partial_t \varphi^{\gamma,k}(t) \|^2\right)
\leq 4 \sup_{t\in [0,1]} \widehat{\mathcal{G}^{\gamma,k}}(t)
\notag \\
& \qquad \leq 4\left(\widehat{\mathcal{G}^{\gamma,k}}(0)
+  \int_0^1 \mathcal{H}^{\gamma,k}_2(s)\, \mathrm{d} s\right)
\exp \left(\int_0^1 \mathcal{H}^{\gamma,k}_1(s)\, \mathrm{d} s\right).
\label{energys}
\end{align}
Integrating \eqref{apphigh} with respect to time, we further obtain
\begin{align}
\int_0^1 \|\nabla\partial_t\varphi^{\gamma,k}(s)\|^2\,\mathrm{d}s
& \leq 2\widehat{\mathcal{G}^{\gamma,k}}(0) + 2\sup _{t \in[0, 1]}\widehat{\mathcal{G}^{\gamma,k}}(t) \int_0^1 \mathcal{H}^{\gamma,k}_1(s)\, \mathrm{d} s
+ 2\int_0^1 \mathcal{H}^{\gamma,k}_2(s)\, \mathrm{d} s.
\label{energys-b}
\end{align}
Then it is left to estimate the initial datum $\widehat{\mathcal{G}^{\gamma,k}}(0)$. Define
\begin{align*}
\widehat{\mu}_{0,k}
& =- \Delta \varphi_{0,k} +  \varPsi'(\varphi_{0,k}) -\chi\sigma^k(0) +\beta\mathcal{N}(\varphi_{0,k}-\overline{\varphi_{0,k}})\\
& = \widetilde{\mu}_{0,k} - \theta_0 \varphi_{0,k} -\chi\sigma^k(0) +\beta\mathcal{N}(\varphi_{0,k}-\overline{\varphi_{0,k}}).
\end{align*}
By the definition of $\widetilde{\mu}_{0,k}$ and the construction of $\varphi_{0,k}$, $\sigma^k$, we find
\begin{align}
  \|\nabla \widehat{\mu}_{0,k} \|
\leq C_{16}\big(\|\nabla \widetilde{\mu}_{0}\| + \|\varphi_{0}\|_{H^1}  +\|\sigma(0)\|_{H^1}\big),
\label{ini-mu-hat}
\end{align}
where $C_{16}>0$ is independent of $\gamma$, $k$ and time.
Noticing that $\partial_t\varphi^{\gamma,k}$, $\mu^{\gamma,k}\in C([0,T];H^1(\Omega))$ and $\sigma^{k}\in C([0,T];H^1(\Omega))$, we infer from equation \eqref{f2.d} that $- \Delta \varphi^{\gamma,k}+  \varPsi'(\varphi^{\gamma,k})\in C([0,T];H^1(\Omega))$.
Following a similar argument for \cite[(3.46)]{HW2022}, we can deduce that (using again $\bm{v}^k(0)=\mathbf{0}$):
\begin{align}
\|\nabla \mu^{\gamma,k}(0) \|^{2}
+ \gamma\|\partial_t \varphi^{\gamma,k}(0)\|^2
& = \gamma \alpha^2 |\Omega| |\overline{\varphi_{0,k}}-c_0|^2
+ \int_\Omega \nabla \mu^{\gamma,k}(0)\cdot \nabla\widehat{\mu}_{0,k}\,\mathrm{d}x
\notag \\
& \leq 4 \alpha^2 |\Omega| +
\frac12\|\nabla \mu^{\gamma,k}(0) \|^{2} + \frac12\|\nabla \widehat{\mu}_{0,k}\|^2.
\notag
\end{align}
From the above estimate, \eqref{q2c}, $\gamma\in (0,1)$, $\|\varphi_{0,k}\|_{L^\infty}\leq 1$ and Young's inequality, we also obtain
\begin{align}
\left|\int_\Omega \mu^{\gamma,k}(0)\,\mathrm{d}x\right|
& \leq
C\big(1
+ \|\nabla \mu^{\gamma,k}(0) \| + \gamma\|\partial_t \varphi^{\gamma,k}(0)\| +\|\sigma^k(0)\|_{H^1} \big)
\notag \\
&\leq \frac12 \|\nabla \widehat{\mu}_{0,k}\|^2 +C(1+\|\sigma(0)\|_{H^1}),
\notag
\end{align}
which is again independent of $\gamma$ and $k$ (cf. \eqref{ini-mu-hat}). In summary, we get
\begin{align}
\widehat{\mathcal{G}^{\gamma,k}}(0)
\leq \|\nabla \widehat{\mu}_{0,k}\|^2 + 8 \alpha^2 |\Omega| + 2\gamma C_{14}(k,T)+ 2C_{15},
\label{ini-Ggk}
\end{align}
which is uniformly bounded with respect to $\gamma\in(0,1)$.

Next, we consider the estimate on $[1,T]$. An application of the uniform Gronwall inequality (see, e.g., \cite[Chapter III, Lemma 1.1]{T}) to \eqref{apphigh} yields
\begin{align}
\widehat{\mathcal{G}^{\gamma,k}}(t)
& \leq \left(\sup_{r\in[0,T-1]}\int_r^{r+1} \widehat{\mathcal{G}^{\gamma,k}}(s)\,\mathrm{d}s
+ \sup_{r\in[0,T-1]}\int_r^{r+1} \mathcal{H}^{\gamma,k}_2(s)\, \mathrm{d} s\right)
\notag\\
&\quad \ \ \times \exp\left(\sup_{r\in[0,T-1]}\int_r^{r+1} \mathcal{H}^{\gamma,k}_1(s)\, \mathrm{d} s\right),\quad \forall\,t\in [1,T].
\label{energys-uG}
\end{align}
Then integrating \eqref{apphigh} with respect to time, we get
\begin{align}
\int_t^{t+1} \|\nabla\partial_t\varphi^{\gamma,k}(s)\|^2\,\mathrm{d}s
& \leq 2\widehat{\mathcal{G}^{\gamma,k}}(t) + 2\sup _{r \in[t, t+1]}\widehat{\mathcal{G}^{\gamma,k}}(r) \int_t^{t+1} \mathcal{H}^{\gamma,k}_1(s)\, \mathrm{d} s
\notag\\
&\quad
+ 2\int_t^{t+1} \mathcal{H}^{\gamma,k}_2(s)\, \mathrm{d} s, \quad \forall\, t\in [0,T-1].
\label{energys-b-uG}
\end{align}
Combining \eqref{low-es2-app1b}, \eqref{energys}--\eqref{energys-b-uG}, we arrive at higher-order estimates for $(\varphi^{\gamma,k},\mu^{\gamma,k})$ on $[0,T]$ (note that $T\geq 1$ is arbitrary but fixed), which are uniform with respect to the parameter $\gamma\in (0,1)$. Recalling \eqref{q13}, we further obtain
\begin{align}
\sup_{t\in[0,T]} \|\varphi^{\gamma,k}(t)\|_{H^2}+
   \sup_{t\in[0,T-1]}\int_{t}^{t+1} \left( \|\varphi^{\gamma,k}(s)\|_{W^{2,6}}^2  + \|\varPsi_0'(\varphi^{\gamma,k}(s))\|_{L^6}^2\right)\,\mathrm{d}s \leq C.
   \label{energys-c-uG}
\end{align}

\textbf{The vanishing viscosity limit as $\gamma \to 0^+$.} Thanks to the estimates \eqref{aver}, \eqref{low-es2-app1b}, \eqref{q13} and \eqref{energys}--\eqref{energys-c-uG}, we can pass to the limit as $\gamma\to 0^+$ (keeping $k$, $T$ fixed) such that there exists a convergent subsequence $\{(\varphi^{\gamma,k}, \mu^{\gamma,k})\}$ (not relabeled for simplicity) with the limit denoted by $(\varphi^k, \mu^k)$. The procedure follows a standard compactness argument (see e.g., \cite{A2022,Gio2022}) and its details are omitted here. The limit triple $(\varphi^k, \mu^k)$ satisfies
\begin{align*}
&\varphi^k\in L^\infty(0,T;H^2(\Omega))\cap L^2(0,T;W^{2,6}(\Omega))
\cap  H^1(0,T;H^1(\Omega)),\\
&\mu^k\in L^\infty(0,T;H^1(\Omega)),\\
&\varphi^k\in   L^{\infty}(\Omega\times(0,T))\ \ \textrm{with}\ \ |\varphi^{k}(x,t)|<1\quad \textrm{a.e.\ in}\ \Omega\times(0,T),
\end{align*}
for any $T>0$. Noticing that $(\varphi^k, \mu^k)$ satisfies
$$
(\partial_t \varphi^k, \xi) + (\bm{v}^k \cdot \nabla \varphi^k,\xi) =-(\nabla \mu^k, \nabla \xi) -\alpha \big((\overline{\varphi^k}-c_0), \xi\big), \quad \forall\, \xi \in H^1(\Omega),
$$
almost everywhere in $(0,T)$, and
\begin{align}
\|\boldsymbol{v}^k \cdot \nabla \varphi^k\|
&\leq C \|\boldsymbol{v}^k\|_{\bm{L}^6}
\|\varphi^k\|_{W^{1,3}}
 \leq C\|\boldsymbol{v}^k\|_{\bm{H}^1}
\|\varphi^k\|_{H^{2}},
\notag
\end{align}
we can infer from the elliptic regularity theory that
$$
\mu^k\in L^2(0,T;H^2(\Omega)).
$$
With the above estimates, it is straightforward to check that the strong solution $(\varphi^k, \mu^k)$ is unique. In addition, it solves the following system
\begin{equation}
    \begin{cases}
	&\partial_t \varphi^k + \bm{v}^k \cdot \nabla \varphi^k=\Delta \mu^k-\alpha(\overline{\varphi^k}-c_0),  \\
	&\mu^k=- \Delta \varphi^k+  \varPsi'(\varphi^k)-\chi \sigma^k+\beta\mathcal{N}(\varphi^k-\overline{\varphi^k}),
	\end{cases}
    \label{1f1.a-app1-b}
\end{equation}
almost everywhere in $\Omega \times (0,T)$, and fulfills the boundary and initial conditions:
\begin{equation}
    \begin{cases}
{\partial}_{\bm{n}}\varphi^k={\partial}_{\bm{n}}\mu^k =0,\quad\textrm{a.e. on} \ \partial\Omega\times(0,T),
\\
 \varphi^k|_{t=0}=\varphi_{0,k}(x),  \quad \ \ \, \textrm{a.e. in}\ \Omega.
\end{cases}
\label{1f1.a-app1-bi}
\end{equation}
Since $T>0$ is arbitrary, the solution $(\varphi^k, \mu^k)$ can be uniquely extended to $[0,+\infty)$. However, at this stage, we only have estimates in an arbitrary but fixed interval $[0,T]$.
\medskip

\textbf{Passage to the limit as $k \to +\infty$.}
For every solution $(\varphi^k, \mu^k)$, we can repeat the previous argument without the viscous term (i.e., $\gamma=0$) to show that lower-order estimates corresponding to \eqref{aver}, \eqref{low-es2-app1b}, \eqref{q13} are still valid, being uniform with respect to the parameter $k$ and the final time $T$.

In what follows, we derive higher-order estimates for $(\varphi^k, \mu^k)$. We shall work on the convergent subsequence $\big\{(\varphi^{\gamma,k}, \mu^{\gamma,k})\big\}$ (not relabeled for simplicity) with the corresponding limit $(\varphi^k, \mu^k)$. All convergence results mentioned in the following should be understood in the sense of a subsequence.

Integrating \eqref{con-CHs-es7} with respect to time, the approximate solution $(\varphi^{\gamma,k},\mu^{\gamma,k})$ satisfies the following energy identity:
\begin{align}
&  \frac12 \|\nabla \varphi^{\gamma,k}(t_2)\|^2  + \int_\Omega \varPsi(\varphi^{\gamma,k}(t_2)) \,\mathrm{d}x
+ \frac{\beta}{2} \|\nabla \mathcal{N}(\varphi^{\gamma,k}(t_2) -\overline{\varphi^{\gamma,k}(t_2)})\|^2 \notag\\
&\qquad
+\int_{t_1}^{t_2} (\|\nabla \mu^{\gamma,k}(s)\|^2 + \gamma\|\partial_t\varphi^{\gamma,k}(s)\|^2)\,\mathrm{d}s
\notag \\
&\quad = \frac12 \|\nabla \varphi^{\gamma,k}(t_1)\|^2 + \int_\Omega \varPsi(\varphi^{\gamma,k}(t_1)) \,\mathrm{d}x
+ \frac{\beta}{2} \|\nabla \mathcal{N}(\varphi^{\gamma,k}(t_1)-\overline{\varphi^{\gamma,k}(t_1)})\|^2
\notag\\
& \qquad  -\int_{t_1}^{t_2}\!\! \int_\Omega (\bm{v}^k(s)\cdot\nabla \varphi^{\gamma,k}(s))(\mu^{\gamma,k}(s) +\chi\sigma^k(s))\,\mathrm{d}x\mathrm{d}s
-\chi\int_{t_1}^{t_2} \!\!\int_\Omega \nabla \mu^{\gamma,k}(s)\cdot \nabla \sigma^k(s)\,\mathrm{d}x\mathrm{d}s
\notag\\
&\qquad
-\int_{t_1}^{t_2} \alpha(\overline{\varphi^{\gamma,k}(s)}-c_0)\int_\Omega (\mu^{\gamma,k}(s)+ \chi \sigma^k(s))\,\mathrm{d}x\mathrm{d}s,
\notag
\end{align}
for any $0\leq t_1\leq t_2\leq T$. Due to the regularity of $(\varphi^k, \mu^k)$, we can check that it also satisfies an energy identity:
\begin{align}
&  \frac12 \|\nabla \varphi^{k}(t_2)\|^2
+ \int_\Omega \varPsi(\varphi^{k}(t_2))\,\mathrm{d}x
+ \frac{\beta}{2} \|\nabla \mathcal{N}(\varphi^{k}(t_2)-\overline{\varphi^{k}(t_2)})\|^2
+\int_{t_1}^{t_2} \|\nabla \mu^{k}(s)\|^2\,\mathrm{d}s
\notag \\
&\quad
= \frac12 \|\nabla \varphi^{k}(t_1)\|^2
+ \int_\Omega \varPsi(\varphi^{k}(t_1))\,\mathrm{d}x
+ \frac{\beta}{2} \|\nabla \mathcal{N}(\varphi^{k}(t_1)-\overline{\varphi^{k}(t_1)})\|^2
\notag\\
& \qquad  -\int_{t_1}^{t_2} \!\! \int_\Omega (\bm{v}^k(s)\cdot\nabla \varphi^{k}(s))(\mu^{k}(s)+\chi\sigma^k(s))\,\mathrm{d}x\mathrm{d}s
-\chi\int_{t_1}^{t_2}\!\! \int_\Omega \nabla \mu^{k}(s)\cdot \nabla \sigma^k(s)\,\mathrm{d}x\mathrm{d}s
\notag\\
&\qquad -\int_{t_1}^{t_2} \alpha(\overline{\varphi^{k}(s)}-c_0)\int_\Omega (\mu^{k}(s) + \chi \sigma^k(s))\,\mathrm{d}x\mathrm{d}s,
\notag
\end{align}
for any $0\leq t_1\leq t_2\leq T$.
Using \eqref{aver}, \eqref{energys-b}, \eqref{ini-mu-hat}, \eqref{ini-Ggk}, \eqref{energys-b-uG} and the Poincar\'{e}--Wirtinger inequality, we observe that
\begin{align}
   \int_{t_1}^{t_2} \gamma\|\partial_t\varphi^{\gamma,k}(s)\|^2 \,\mathrm{d}s
  & \leq  \gamma C\int_{t_1}^{t_2}\|\nabla\partial_t\varphi^{\gamma,k}(s)\|^2 \,\mathrm{d}s + \gamma C |\Omega| \int_{t_1}^{t_2}  |\overline{\partial_t\varphi^{\gamma,k}(s)}|^2 \,\mathrm{d}s
  \notag\\
  &\to 0, \quad \text{as}\ \gamma\to 0^+.  \notag
\end{align}
By the argument in \cite[Section 2]{A2022}, comparing the two energy identities above and taking the limit as $\gamma\to 0^+$, we obtain
\begin{align*}
 \lim_{\gamma\to 0^+}\int_{t_1}^{t_2} \|\nabla \mu^{\gamma,k}(s)\|^2 \,\mathrm{d}s
=\int_{t_1}^{t_2}\|\nabla \mu^{k}(s)\|^2\,\mathrm{d}s,
\end{align*}
for any $0\leq t_1\leq t_2\leq T$.
With this observation, by the lower semi-continuity of norms with respect to the weak convergence, we can pass to the limit as $\gamma\to 0^+$ in the inequalities \eqref{energys}, \eqref{energys-b} to get (with the aid of \eqref{ini-mu-hat}, \eqref{ini-Ggk})
\begin{align}
	& \sup_{t\in[0,1]} \|\nabla \mu^{k}(t)\|^2
  \leq 4\left( C_{17}
+ \int_0^1 \mathcal{H}^{k}_2(s)\, \mathrm{d} s\right)
\exp \left(C\int_0^1 \|\nabla \boldsymbol{v}^k(s)\|^2 \, \mathrm{d} s\right),
\notag
\end{align}
and
\begin{align}
\int_0^1 \|\nabla\partial_t\varphi^{k}(s)\|^2\,\mathrm{d}s
& \leq 2 C_{17}
+ 2C \left( C_{17}
+ \int_0^1 \mathcal{H}^{k}_2(s)\, \mathrm{d} s\right)\int_0^1 \|\nabla \boldsymbol{v}^k(s)\|^2 \, \mathrm{d} s \notag\\
&\qquad\  \times
\exp \left(C\int_0^1 \|\nabla \boldsymbol{v}^k(s)\|^2 \, \mathrm{d} s\right)+ 2\int_0^1 \mathcal{H}^{k}_2(s)\, \mathrm{d} s,
\notag
\end{align}
where
\begin{align*}
C_{17} & = C_{16}\big(\|\nabla \widetilde{\mu}_{0}\| + \|\varphi_{0}\|_{H^1}  +\|\sigma(0)\|_{H^1}\big) + 8\alpha^2|\Omega| + 2C_{15},
\\
\mathcal{H}^{k}_2(t)
 & =C\|\nabla \mu^{k}(t)\|^2
 + C(1+\|\nabla \bm{v}^k(t)\|^2)(1+\|\sigma^k(t)\|_{L^6}^2) + C \|\partial_t\sigma^k\|_{(H^1)'}^2.
\end{align*}
Similarly, we can pass to the limit as $\gamma\to 0^+$ in the inequalities \eqref{energys-uG}, \eqref{energys-b-uG} to obtain (with the aid of \eqref{ini-mu-hat}, \eqref{ini-Ggk})
\begin{align}
\sup_{t\in[1,T]} \|\nabla \mu^{k}(t)\|^2
& \leq 4\left(\sup_{r\in[0,T-1]}A(r) + \sup_{r\in[0,T-1]}\int_r^{r+1} \mathcal{H}^{k}_2(s)\, \mathrm{d} s\right)
\notag\\
&\quad \ \ \ \times \exp\left(C\sup_{r\in[0,T-1]}\int_r^{r+1} \|\nabla \bm{v}^k(s)\|^2\, \mathrm{d} s\right),
\notag
\end{align}
and
\begin{align}
& \sup_{t\in[0,T-1]}\int_t^{t+1} \|\nabla\partial_t\varphi^{k}(s)\|^2\,\mathrm{d}s \notag\\
&\quad  \leq 2\left(C_{17}+ \sup_{r\in[0,T-1]}A(r) + \sup_{r\in[0,T-1]}\int_r^{r+1} \mathcal{H}^{k}_2(s)\, \mathrm{d} s\right)
\left(1  + C\int_t^{t+1} \|\nabla \bm{v}^k(s)\|^2\, \mathrm{d} s\right)
\notag\\
&\qquad \ \times \exp\left(C\sup_{r\in[0,T-1]}\int_r^{r+1} \|\nabla \bm{v}^k(s)\|^2\, \mathrm{d} s\right)
+ 2\int_t^{t+1} \mathcal{H}^{k}_2(s)\, \mathrm{d} s,
\notag
\end{align}
where
$$
A(r)=\int_r^{r+1}  \|\nabla \mu^k(s)\|^2 \,\mathrm{d}s + 2C_{15},\quad r\in [0,T-1].
$$
Hence, we have derived estimates for $\mu^k$, $\partial_t\varphi^k$ on $[0,T]$, which are independent of $k$ and $T$.

Collecting the above estimates, we can conclude
\begin{align}
& \|\mu^k\|_{L^{\infty}(0, +\infty ; H^1(\Omega))}
+ \|\partial_t \varphi^k\|_{L^2_{\mathrm{uloc}}(0, +\infty ; H^1(\Omega))}
\notag \\
& \quad + \|\varphi^k\|_{L^{\infty}(0, +\infty ; W^{2, 6}(\Omega))}
+  \|\varPsi'(\varphi^k)\|_{L^{\infty}(0, +\infty; L^6(\Omega))} \leq C.
\label{uni-es}
\end{align}
Noticing that
\begin{align}
\|\boldsymbol{v}^k \cdot \nabla \varphi^k\|_{ H^1}
&\leq C(\|\boldsymbol{v}^k\|_{\bm{H}^1}
\|\nabla \varphi^k\|_{ \bm{L}^{\infty}}
+ \|\boldsymbol{v}^k\|_{\bm{L}^6}
\|\varphi^k\|_{W^{2,3}})
\notag \\
&\leq C\|\boldsymbol{v}^k\|_{\bm{H}^1}
\|\varphi^k\|_{W^{2,6}},
\notag
\end{align}
from \eqref{uni-es} and the elliptic regularity theory applied to the first equation in \eqref{1f1.a-app1-b}, we also get
\begin{align}
\|\mu^k\|_{L^2_{\mathrm{uloc}}([0, +\infty) ; H^3(\Omega))} \leq C.
\label{uni-es-mu3}
\end{align}
The positive constant $C$ appearing in \eqref{uni-es}, \eqref{uni-es-mu3} depends on $\max_{r\in[-1,1]}|\varPsi(r)|$, $\|\widetilde{\mu}_{0}\|_{H^1}$, $\|\varphi_0\|_{H^1}$, $\overline{\varphi_0}$, $\|\sigma(0)\|_{H^1}$, the coefficients of the system, $\Omega$ and norms of the given data $\|\boldsymbol{v}\|_{L^2(0, +\infty ; \bm{H}_{0, \mathrm{div}}^1(\Omega))}$, $\|\sigma\|_{L^2_{\mathrm{uloc}}([0,+\infty);H^1(\Omega))}$, $\|\sigma\|_{L^\infty(0,+\infty;L^6(\Omega))}$ and $\|\partial_t\sigma\|_{L^2_{\mathrm{uloc}}([0,+\infty);(H^1(\Omega))')}$, but it is independent of $k$ and $T$.

With the above uniform estimates, we can apply the standard compactness argument and pass to the limit as $k\to+\infty$. There exists a convergent subsequence $\big\{(\varphi^k, \mu^k)\}$ whose limit is denoted by $(\varphi, \mu)$. It is straightforward to check that $(\varphi, \mu)$ is a solution to the original problem \eqref{con-CHs} on $[0,+\infty)$ with the corresponding regularity properties as stated in Proposition \ref{con-CHs-ws}-(2).

By comparison in \eqref{1f1.a},
we find
$$
\|\partial_t\varphi\|_{(H^1)'}\leq C\big(\|\bm{v}\|\|\varphi\|_{L^\infty}+\|\nabla \mu\|+ \alpha \|\overline{\varphi}-c_0\|\big).
$$
Thus, if in addition, $\bm{v}\in L^\infty(0,+\infty; \bm{L}^2_{0, \mathrm{div}}(\Omega))$, then $\partial_t\varphi\in L^\infty(0,+\infty;(H^1(\Omega))')$.

The proof of Proposition \ref{con-CHs-ws}-(2) is complete.
\end{proof}

\subsubsection{Proof of Theorem \ref{vch}}
We are ready to prove Theorem \ref{vch}.

\begin{proof}[\textbf{Proof of Theorem \ref{vch}}]
Let the assumptions of Theorem \ref{vch} be satisfied.
\smallskip

\textbf{Step 1.} We can apply Theorem \ref{vch-weak} to conclude that problem \eqref{1f1}--\eqref{ini01} with the given velocity field $\bm{v}$ admits a global weak solution $(\varphi^*, \mu^*, \sigma^*)$ on $[0,+\infty)$.
\smallskip

\textbf{Step 2.} According to Proposition \ref{sig-weak}-(3), the decoupled auxiliary problem \eqref{1f1-sig} with $\varphi=\varphi^*$, the same velocity field $\bm{v}$ and the same initial datum $\sigma_0\in H^1(\Omega)\hookrightarrow L^6(\Omega)$ as for problem \eqref{1f1}--\eqref{ini01}, admits a unique global weak solution $\sigma^\sharp$ on $[0,+\infty)$ such that
\begin{align}
  & \sigma^\sharp\in L^{\infty}(0, +\infty ; L^6(\Omega))\cap L_{\mathrm{uloc}}^2([0, +\infty); H^1(\Omega))\cap   H^{1}_{\mathrm{uloc}}([0, +\infty); (H^1(\Omega))'),
  \notag
\end{align}
and $\sigma^\sharp(0)=\sigma_0\in H^1(\Omega)$.
On the other hand, we easily check that $\sigma^*$ is actually a global weak solution to the same problem \eqref{1f1-sig} with properties described as in Proposition \ref{sig-weak}-(1). Hence, by the conditional uniqueness result (that is, Proposition \ref{sig-weak}-(2)), we find $\sigma^*=\sigma^\sharp$ on $[0,+\infty)$.
\smallskip

\textbf{Step 3.} Consider now the decoupled auxiliary problem \eqref{con-CHs} with $\sigma=\sigma^\sharp$, the same velocity field $\bm{v}$ and the same initial datum $\varphi_0$ as for problem \eqref{1f1}--\eqref{ini01}. According to Proposition \ref{con-CHs-ws}-(2), problem \eqref{con-CHs} admits a unique global strong solution $(\varphi^\sharp, \mu^\sharp)$ on $[0,+\infty)$. On the other hand, we can check that $(\varphi^*,\mu^*)$ is indeed a global weak solution to the same problem \eqref{con-CHs} with properties described as in Proposition \ref{con-CHs-ws}-(1). Due to the uniqueness of weak solutions, we have $(\varphi^*,\mu^*)=(\varphi^\sharp,\mu^\sharp)$ on $[0,+\infty)$.

In summary, we have shown $(\varphi^*,\mu^*, \sigma^*)=(\varphi^\sharp,\mu^\sharp, \sigma^\sharp)$ on $[0,+\infty)$. In addition, by a comparison in \eqref{1f2.b}, we see that
\begin{align}
 \int_0^{+\infty} \|\partial_t\sigma^*(s)\|_{(H^1)'}^2\,\mathrm{d}s
 &  \leq C\int_0^{+\infty} \big(\|\bm{v}(s)\|_{\bm{L}^6}^2 \| \sigma^*(s)\|_{L^3}^2
+\|\nabla (\sigma^*(s)-\chi\varphi^*(s))\|^2\big) \,\mathrm{d}s \notag\\
&  \leq  C \sup_{t\geq 0}\|\sigma^*(s)\|_{L^3}^2 \int_0^{+\infty}  \|\bm{v}(s)\|_{\bm{H}^1}^2\,\mathrm{d}s
\notag\\
&\quad + C \int_0^{+\infty}\|\nabla (\sigma^*(s)-\chi\varphi^*(s))\|^2\,\mathrm{d}s,
\notag
\end{align}
which implies that $\sigma^*\in H^1([0,+\infty);(H^1(\Omega))')$. Thanks to the regularity properties of $\sigma^\sharp$ and $(\varphi^\sharp,\mu^\sharp)$ guaranteed by Proposition \ref{sig-weak}-(3) and Proposition \ref{con-CHs-ws}-(2), respectively, we observe that $(\varphi^*,\mu^*, \sigma^*)$ is actually a global solution of the target problem \eqref{1f1}--\eqref{ini01} on $[0,+\infty)$ with the regularity properties described in Theorem \ref{vch}.
Its uniqueness follows from the conditional uniqueness result established in Theorem \ref{vch-weak}-(2). Finally, the regularity of $(\varphi^*,\mu^*, \sigma^*)$ enables us to derive the energy identity \eqref{a-wBEL} by testing \eqref{1f1.a} with $\mu$ and \eqref{1f2.b} with $\sigma-\chi\varphi$, respectively, adding the resultants together, integrating over $\Omega$ and then $[t_1,t_2]$.

The proof of Theorem \ref{vch} is complete.
\end{proof}

\section{Proof of Main Results}
\label{proof-main}
\setcounter{equation}{0}

Let $(\bm{v}, \varphi,\mu,\sigma)$ be a global weak solution to problem \eqref{nsch}--\eqref{ini0} with initial datum $(\bm{v}_0,\varphi_0, \sigma_0)$ as given in Proposition \ref{main}.
By its construction, we have
\begin{align}
\sup_{t\geq 0}\|\varphi(t)\|_{L^\infty}\leq 1,
\label{vp-Linf}
\end{align}
and the mass relations:
\begin{align}
&\overline{\varphi}(t)-c_0=(\overline{\varphi_0}-c_0)e^{-\alpha t},\quad
\overline{\sigma}(t)=\overline{\sigma_0},\quad \forall\, t\geq 0.
\label{mass-ps}
\end{align}
Since $(\varphi,\mu,\sigma)$ satisfies an estimate similar to \eqref{R1es-app1}, we can infer from the energy inequality \eqref{wBEL} that
\begin{align}
& E(\bm{v}(t),\varphi(t), \sigma(t))
+  \int_{0}^{t}\!\int_{\Omega} \left( 2\nu(\varphi)|D\bm{v}|^2 + \frac12 |\nabla \mu|^2+\frac12 |\nabla(\sigma-\chi\varphi)|^2\right) \mathrm{d}x\mathrm{d}s
\nonumber\\
&\quad \leq E(\bm{v}_0,\varphi_0, \sigma_0) + C,
\notag
\end{align}
where $C>0$ depends on $\overline{\varphi_0}$, $\overline{\sigma_0}$, the coefficients of the system, and $\Omega$. This together with \eqref{vp-Linf}, (H1), Korn's inequality yields the uniform-in-time estimate:
\begin{align}
& \sup_{t\geq 0}\left(\|\bm{v}(t)\|^2+ \|\varphi(t)\|_{H^1}^2
+\|\sigma(t)\|^2\right)\notag \\
&\quad + \int_{0}^{+\infty}\left(\|\nabla \bm{v}(t)\|^2 + \|\nabla \mu(s)\|^2+ \|\nabla(\sigma(s)-\chi\varphi(s))\|^2\right)\mathrm{d}s
\leq C.
\label{wBEL-2}
\end{align}

In what follows, we characterize the regularizing effect of the global weak solution at different time stages.

\subsection{Instantaneous regularity of $(\varphi,\mu,\sigma)$}
\label{ins-reg}
We begin by proving the first part of Theorem \ref{2main}.

\begin{proof}[\textbf{Proof of Theorem \ref{2main}-(1)}]
Due to the regularity properties $\mu\in L^2(0,1;H^1(\Omega))$, $\varphi\in L^4(0,1;H^2_N(\Omega))$ and $\sigma\in L^2(0,1;H^1(\Omega))$, for any $\tau \in (0,1)$, there exists $\widetilde{\tau}\in (0,\tau)$ such that
$$
\varphi(\widetilde{\tau})\in H^2_N(\Omega),\quad
\mu(\widetilde{\tau})\in H^1(\Omega),\quad \sigma(\widetilde{\tau}) \in H^1(\Omega),
$$
with $\|\varphi(\widetilde{\tau})\|_{L^\infty}\leq 1$ and $\overline{\varphi(\widetilde{\tau})}\in (-1,1)$. Taking $\widetilde{\tau}$ as the initial time, we apply Theorem \ref{vch} on the interval $[\widetilde{\tau},+\infty)$. This, combined with the conditional uniqueness result established in Theorem \ref{vch-weak}-(2), allows us to achieve the conclusion of Theorem \ref{2main}-(1).
\end{proof}

\subsection{The $\omega$-limit set and eventual separation property of $\varphi$}
Like in Section \ref{ins-reg}, there exists $\widetilde{\tau} \in (0,1)$ such that we can apply Theorem \ref{vch} on $[\widetilde{\tau},+\infty)$ and  conclude
\begin{align}
 \|\varphi\|_{L^\infty(1,+\infty;W^{2,6}(\Omega))}+\|\sigma\|_{L^\infty(1,+\infty;L^6(\Omega))} \leq C.
\label{uni-es2}
\end{align}
In addition, we have
\begin{align}
\int_1^{+\infty} \|\partial_t\varphi(s)\|_{(H^1)'}^2\,\mathrm{d}s
+\int_1^{+\infty} \|\partial_t\sigma(s)\|_{(H^1)'}^2\,\mathrm{d}s \leq C.
\label{uni-es2-tt}
\end{align}
\begin{remark}\rm
According to \eqref{mass-ps}, if $\alpha=0$, we have the mass conservation $\overline{\varphi(t)}=\overline{\varphi_0}$ for all $t\geq 0$, if $\alpha>0$, then $\overline{\varphi(t)}$ converges exponentially fast to $c_0$ as $t\to +\infty$. In what follows, we focus on the more involved case with $\alpha>0$. The case $\alpha=0$ can be treated in a similar manner with minor modifications.
\end{remark}

In view of \eqref{mass-ps} (assuming $\alpha >0$),
\eqref{wBEL-2} and \eqref{uni-es2}, we introduce the $\omega$-limit set for the global weak solution $(\bm{v}, \varphi, \sigma)$:
\begin{align}
\omega(\bm{v},\varphi, \sigma)
&=\big\{(\bm{v}_\infty,\varphi_\infty, \sigma_\infty)  \in \mathcal{Z} :\  \exists\,\left\{t_{n}\right\} \nearrow +\infty\text { such that }\notag\\
& \qquad  \left(\bm{v}(t_{n}),\varphi\left(t_{n}\right),\sigma \left(t_{n}\right)\right) \rightarrow (\bm{v}_\infty,\varphi_\infty, \sigma_\infty)\text{ weakly in } \bm{L}^2(\Omega)\times W^{2,6}(\Omega)\times L^{6}(\Omega)\big\},
\notag
\end{align}
where
$$
\mathcal{Z}=\left\{(\bm{z}_1,z_2,z_3) \in \bm{L}^2_{0,\mathrm{div}}(\Omega)\times (W^{2,6}(\Omega)\cap H^2_N(\Omega))\times L^{6}(\Omega):\|  z_2 \|_{L^{\infty}} \le 1,\
 \overline{ z_2}=c_0,\ \overline{ z_3}=\overline{\sigma_0}\right\}.
$$

Next, we provide a characterization of $\omega(\bm{v},\varphi, \sigma)$.

\begin{proposition} \label{sta}
Let $(\bm{v}, \varphi,\mu,\sigma)$ be a global weak solution to problem \eqref{nsch}--\eqref{ini0} with initial datum $(\bm{v}_0,\varphi_0, \sigma_0)$ as given in Proposition \ref{main}.
The corresponding $\omega$-limit set $\omega(\bm{v},\varphi, \sigma)$ is a non-empty and compact subset of $\bm{L}^2(\Omega)\times W^{2-\epsilon,6}(\Omega)\times W^{2-\epsilon,6}(\Omega)$ for any $\epsilon\in (0,1/2)$. The total energy $E(\bm{v},\varphi,\sigma)$ and the free energy $\mathcal{F}(\varphi, \sigma)$ are constant on $\omega(\bm{v},\varphi, \sigma)$.
For every $(\bm{v}_\infty,\varphi_\infty, \sigma_\infty)\in \omega(\bm{v},\varphi, \sigma)$, we have
\begin{align}
\lim_{t\to +\infty}\|\boldsymbol{v}(t)\|= 0,
\label{decay-v}
\end{align}
so that $\bm{v}_\infty=\mathbf{0}$. Moreover, $(\varphi_{\infty}, \sigma_{\infty}) \in (W^{2,6}(\Omega)\cap H^2_N(\Omega))\times (W^{2,6}(\Omega)\cap H^2_N(\Omega))$ is a strong solution to the stationary problem \eqref{5bchv}--\eqref{5cchv} satisfying $(\overline{\varphi_\infty},\overline{\sigma_\infty}) =(c_0,\overline{\sigma_0})$. There exists a constant $\delta_0\in (0,1)$ such that
	\be
	\|\varphi_{\infty}\|_{C(\overline{\Omega})}\le 1-\delta_0,
    \label{ps1}
	\ee
	where $\delta_0$ is independent of $\varphi_\infty$.
\end{proposition}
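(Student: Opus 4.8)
The plan is to combine the uniform-in-time bounds \eqref{wBEL-2} and \eqref{uni-es2} with the finiteness of the dissipation integral, passing along time-shifted trajectories to stationary limits. First I would establish the velocity decay \eqref{decay-v}: from \eqref{wBEL-2} and the Poincaré inequality (valid since $\bm{v}(t)\in\bm{H}^1_{0,\mathrm{div}}(\Omega)$) one gets $\int_0^\infty\|\bm{v}(s)\|^2\,\mathrm{d}s<+\infty$, while testing the momentum equation \eqref{test3.c} with $\bm{v}$ and using $\partial_t\bm{v}\in L^{4/3}_{\mathrm{uloc}}$ together with $\bm{v}\in L^2_{\mathrm{uloc}}(\bm{H}^1_{0,\mathrm{div}})$ shows that $t\mapsto\|\bm{v}(t)\|^2$ has derivative in $L^1_{\mathrm{uloc}}(0,+\infty)$, hence is uniformly continuous; a uniformly continuous nonnegative integrable function must vanish at infinity, giving \eqref{decay-v} and $\bm{v}_\infty=\mathbf{0}$.

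Next, for an arbitrary sequence $t_n\nearrow+\infty$ defining a point of $\omega(\bm{v},\varphi,\sigma)$, I would consider the shifts $\varphi^n(s)=\varphi(t_n+s)$, $\sigma^n(s)=\sigma(t_n+s)$, $\mu^n(s)=\mu(t_n+s)$ on $s\in[0,1]$. Because the integrals $\int_0^\infty(\|\nabla\bm{v}\|^2+\|\nabla\mu\|^2+\|\nabla(\sigma-\chi\varphi)\|^2)\,\mathrm{d}s$ and $\int_1^\infty(\|\partial_t\varphi\|_{H^1}^2+\|\partial_t\sigma\|_{(H^1)'}^2)\,\mathrm{d}s$ converge, their tails over $[t_n,t_n+1]$ tend to zero, so $\nabla\mu^n\to 0$, $\nabla(\sigma^n-\chi\varphi^n)\to 0$, $\partial_t\varphi^n\to 0$, $\partial_t\sigma^n\to 0$ in the corresponding $L^2(0,1;\cdot)$ spaces. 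Combined with \eqref{uni-es2} and the Aubin--Lions lemma (using $\varphi^n$ bounded in $L^\infty(0,1;W^{2,6})$ with $\partial_t\varphi^n$ bounded in $L^2(0,1;H^1)$), the limit is time-independent, equals $(\varphi_\infty,\sigma_\infty)$, and satisfies $\mu_\infty\equiv\mathrm{const}$ and $\sigma_\infty-\chi\varphi_\infty\equiv\mathrm{const}$. Passing to the limit in \eqref{test4.dw}, taking means, and using the mass relations \eqref{mass-ps} ($\overline{\varphi_\infty}=c_0$, $\overline{\sigma_\infty}=\overline{\sigma_0}$) yields the stationary system \eqref{5bchv}--\eqref{5cchv}; the Neumann conditions survive because the $\varphi^n$ converge strongly in $W^{2-\epsilon,6}(\Omega)$. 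Elliptic regularity applied to \eqref{5bchv}, using that $\mu_\infty$, $\|\sigma_\infty\|_{L^6}$ and $\|\varphi_\infty\|_{L^\infty}\le 1$ are bounded uniformly over $\omega(\bm{v},\varphi,\sigma)$, gives a uniform bound $\|\varphi_\infty\|_{W^{2,6}}+\|\varPsi_0'(\varphi_\infty)\|_{L^6}\le C$, whence $\sigma_\infty=\chi\varphi_\infty+\mathrm{const}\in W^{2,6}(\Omega)\cap H^2_N(\Omega)$ is equally bounded.

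For the strict separation \eqref{ps1} I would rewrite \eqref{5bchv} as $-\Delta\varphi_\infty+\varPsi_0'(\varphi_\infty)=f$ with $f$ collecting the remaining terms; since $\mu_\infty$ is constant and $\varphi_\infty,\sigma_\infty\in W^{2,6}(\Omega)\hookrightarrow C(\overline{\Omega})$ with uniform bounds, we have $\|f\|_{L^\infty}\le M$ for some $M$ independent of the particular equilibrium. As $\varPsi_0''\ge\theta>0$ by (H2), $\varPsi_0'$ is strictly increasing with $\varPsi_0'(r)\to\pm\infty$ as $r\to\pm1$, so there exist $s_\pm\in(-1,1)$ with $\varPsi_0'(s_+)=M$ and $\varPsi_0'(s_-)=-M$. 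Testing with $(\varphi_\infty-s_+)^+$ gives $\|\nabla(\varphi_\infty-s_+)^+\|^2+\int(\varPsi_0'(\varphi_\infty)-f)(\varphi_\infty-s_+)^+\,\mathrm{d}x=0$, and on $\{\varphi_\infty>s_+\}$ the integrand is nonnegative since $\varPsi_0'(\varphi_\infty)>M\ge f$, forcing $(\varphi_\infty-s_+)^+\equiv0$; the symmetric choice bounds $\varphi_\infty$ from below. Hence $s_-\le\varphi_\infty\le s_+$ and $\delta_0:=\min\{1-s_+,\,1+s_-\}\in(0,1)$ depends only on $M$ and $\varPsi_0$, proving \eqref{ps1} with $\delta_0$ independent of $\varphi_\infty$.

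Finally, non-emptiness of $\omega(\bm{v},\varphi,\sigma)$ follows from weak compactness of $\{(\bm{v}(t_n),\varphi(t_n),\sigma(t_n))\}$ in $\bm{L}^2\times W^{2,6}\times L^6$; the uniform $W^{2,6}$ bounds above show $\omega(\bm{v},\varphi,\sigma)$ lies in a bounded subset of $\{\mathbf{0}\}\times W^{2,6}(\Omega)\times W^{2,6}(\Omega)$, hence is precompact in $\bm{L}^2\times W^{2-\epsilon,6}\times W^{2-\epsilon,6}$ via the compact embedding $W^{2,6}\hookrightarrow\hookrightarrow W^{2-\epsilon,6}$, while closedness in this topology is obtained by the usual diagonal selection of times, giving compactness. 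That $E$ and $\mathcal{F}$ are constant on $\omega(\bm{v},\varphi,\sigma)$ follows from the convergence of $E(t)$ as $t\to+\infty$ (the energy inequality \eqref{wBEL} with its exponentially decaying production term) together with \eqref{decay-v} and the strong convergence of $\varphi(t_n),\sigma(t_n)$, which renders $\mathcal{F}$ continuous along the sequence. The main obstacle is the second step: executing the time-shift limit along \emph{arbitrary} sequences, rather than merely a.e. times where the dissipation is small, while simultaneously upgrading to the uniform $W^{2,6}$ regularity that both the separation estimate and the compactness statement require.
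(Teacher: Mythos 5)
Your argument for the velocity decay \eqref{decay-v} contains a genuine gap, and it is the step on which $\bm{v}_\infty=\mathbf{0}$ and the constancy of $E$ and $\mathcal{F}$ on the $\omega$-limit set then rest. For a three-dimensional weak solution you are not allowed to test \eqref{test3.c} with $\bm{v}$ itself: with only $\partial_t\bm{v}\in L^{4/3}_{\mathrm{uloc}}(0,+\infty;(\bm{H}^1_{0,\mathrm{div}}(\Omega))')$ and $\bm{v}\in L^2_{\mathrm{uloc}}(0,+\infty;\bm{H}^1_{0,\mathrm{div}}(\Omega))$, H\"older gives $\langle\partial_t\bm{v},\bm{v}\rangle$ a bound only in $L^{4/5}$ locally in time (since $\tfrac34+\tfrac12=\tfrac54$), which is not $L^1_{\mathrm{loc}}$; more fundamentally, the chain rule $\frac{\mathrm{d}}{\mathrm{d}t}\tfrac12\|\bm{v}\|^2=\langle\partial_t\bm{v},\bm{v}\rangle$ requires the Lions--Magenes duality setting ($\partial_t\bm{v}\in L^2((\bm{H}^1_{0,\mathrm{div}})')$) or a Shinbrot-type condition such as $\bm{v}\in L^4(0,T;\bm{L}^4)$, neither of which is available. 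This is exactly why 3D Leray--Hopf solutions satisfy only the energy \emph{inequality} \eqref{wBEL}: $t\mapsto\|\bm{v}(t)\|^2$ is not known to be absolutely continuous, so your ``derivative in $L^1_{\mathrm{uloc}}$, hence uniformly continuous, hence an integrable function vanishing at infinity'' argument has no basis. The paper's fix is structural: by Theorem \ref{vch}, after the instantaneous regularization the pair $(\varphi,\sigma)$ satisfies the subsystem energy \emph{equality} \eqref{a-wBEL}; subtracting it from \eqref{wBEL} cancels the $\alpha$-production term and isolates a kinetic-energy inequality whose remainder $-\int\!\!\int\bm{v}\cdot\nabla(\mu+\chi\sigma)\varphi$ is controlled by dissipation tails as in \eqref{L1vmup}, so choosing an admissible $t_1$ large makes $\sup_{t\geq t_1}\|\bm{v}(t)\|$ small. (Equivalently: $E(t)$ converges by \eqref{wBEL} with its integrable production, $\mathcal{F}(t)$ converges by \eqref{a-wBEL}, hence $\|\bm{v}(t)\|^2$ converges, and $\int_0^\infty\|\bm{v}\|^2\,\mathrm{d}s<\infty$ forces the limit to be zero --- but this again uses the subsystem energy equality, not the testing you propose.) A smaller point in the same spirit: your $L^2$-in-time tail smallness of $\|\nabla(\sigma-\chi\varphi)\|$ does not by itself give strong $L^2$ convergence of $\sigma(t_n)$ at the \emph{specific} times $t_n$; the paper obtains the pointwise-in-time decay \eqref{con-aves} from the differential inequality \eqref{energy n2}, and you would need either that or an interpolation of $\|\cdot\|_{L^2}$ between $(H^1(\Omega))'$ (controlled via the $\partial_t\sigma$ tails) and the uniform $L^6$ bound.

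The remainder of your proposal is sound and in part genuinely different from the paper. The time-shift/stationarity argument for identifying $(\varphi_\infty,\sigma_\infty)$ as a solution of \eqref{5bchv}--\eqref{5cchv} parallels the references the paper invokes and is fine once the $\sigma$-convergence issue above is repaired. Most notably, your proof of the uniform separation \eqref{ps1} --- writing the stationary equation as $-\Delta\varphi_\infty+\varPsi_0'(\varphi_\infty)=f$ with $\|f\|_{L^\infty}\leq M$ uniformly over the $\omega$-limit set (using $\mu_\infty$ constant, $\sigma_\infty=\chi\varphi_\infty+\mathrm{const}$, and the uniform $L^6$ bound on $\varPsi'(\varphi_\infty)$), then testing with $(\varphi_\infty-s_+)^+$ where $\varPsi_0'(s_\pm)=\pm M$ --- is correct: the strict monotonicity $\varPsi_0''\geq\theta>0$ makes the integrand strictly positive on $\{\varphi_\infty>s_+\}$, so that set is null, and $\delta_0=\min\{1-s_+,1+s_-\}$ depends only on $M$ and $\varPsi_0$. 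This is more self-contained and more explicit than the paper's dynamic approach, which regards $(\varphi_\infty,\sigma_\infty)$ as initial datum of the evolution problem, invokes \cite[Proposition 5.1]{H2} for separation, and then removes the dependence on the equilibrium via compactness of $\{\varphi_\infty\}$ in $C(\overline{\Omega})$; your route buys an explicit, compactness-free $\delta_0$, while the paper's buys robustness (it does not need the uniform $L^\infty$ bound on $f$ assembled from elliptic estimates).
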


\begin{proof} Thanks to \eqref{mass-ps}, \eqref{wBEL-2}, \eqref{uni-es2} and the Banach--Alaoglu theorem, we see that
$\omega(\bm{v},\varphi, \sigma)$ is a non-empty bounded set in $\mathcal{Z}$.

Next, we prove \eqref{decay-v} by adapting the argument for \cite[Lemma 3.2]{A2022}. Using the energy inequality \eqref{wBEL} for the full system and the energy equality \eqref{a-wBEL} for the subsystem of $(\varphi, \sigma)$ (for $t\geq 1$), we can deduce that
\begin{align}
& \frac{1}{2}\|\bm{v}(t_2)\|^2
+  \int_{t_1}^{t_2}\!\!\int_{\Omega} 2\nu(\varphi(s))|D\bm{v}(s)|^2 \, \mathrm{d}x\mathrm{d}s
\notag\\
&\quad \leq \frac{1}{2}\|\bm{v}(t_1)\|^2
-\int_{t_1}^{t_2}\!\!\int_\Omega \big[\bm{v}(s)\cdot \nabla (\mu(s) +\chi \sigma(s))\big] \varphi(s)\,\mathrm{d}x \mathrm{d}s,
\notag
\end{align}
for almost all $t_1\geq 1$ and all $t_2\in [t_1,+\infty)$. Using the incompressibility condition $\mathrm{div}\bm{v}=0$ and \eqref{vp-Linf}, the second term on the right-hand side can be controlled as follows:
\begin{align}
&-\int_{t_1}^{t_2}\!\!\int_\Omega  \big[\bm{v}(s)\cdot \nabla (\mu(s) +\chi \sigma(s))\big] \varphi(s)\,\mathrm{d}x \mathrm{d}s
\notag\\
&\quad = -\int_{t_1}^{t_2}\!\!\int_\Omega  (\bm{v}(s)\cdot \nabla \mu(s)) \varphi(s)\,\mathrm{d}x \mathrm{d}s - \chi \int_{t_1}^{t_2}\!\!\int_\Omega  \big[\bm{v}(s)\cdot \nabla (\sigma(s)-\chi\varphi(s))\big] \varphi(s)\,\mathrm{d}x \mathrm{d}s
\notag\\
&\quad\le \big(\|\nabla\mu\|_{L^2(t_1,+\infty;\bm{L}^2(\Omega))} +|\chi|\|\nabla(\sigma-\chi\varphi)\|_{L^2(t_1,+\infty;\bm{L}^2(\Omega))}\big) \|\bm{v}\|_{L^2(t_1,+\infty;\bm{L}^2(\Omega))},
\label{L1vmup}
\end{align}
Thus, for arbitrary $\varepsilon\in (0,1)$, we infer from \eqref{wBEL-2} and Korn's inequality that there exists $t_1=t_1(\varepsilon)$ sufficiently large such that
$$
\|\nabla\mu\|_{L^2(t_1,+\infty;\bm{L}^2(\Omega))} \leq \varepsilon,
\quad \|\nabla(\sigma-\chi\varphi)\|_{L^2(t_1,+\infty;L^2(\Omega))} \leq \varepsilon,\quad
\|\bm{v}\|_{L^2(t_1,+\infty;\bm{H}^1(\Omega))}\leq \varepsilon,
$$
as well as $
\|\bm{v}(t_1)\|\leq \varepsilon$.
Hence, we have
$$
\begin{aligned}
& \sup _{t\geq t_1(\varepsilon)} \frac{1}{2}\|\bm{v}(t)\|^2
+ \int_{t_1(\varepsilon)}^{+\infty} \!\int_\Omega 2\nu(\varphi(s))|D\bm{v}(s)|^2\, \mathrm{d}x\mathrm{d}s
 \leq \frac12\varepsilon^2+ (1+|\chi|)\varepsilon^2,
\end{aligned}
$$
which together with the Cauchy--Schwarz, Korn and Young inequalities yields
$$
\|\boldsymbol{v}\|_{L^{\infty}(t_1,+ \infty ; \bm{L}^2(\Omega))}^2+\|\boldsymbol{v}\|_{L^2(t_1, +\infty ; \bm{H}^1(\Omega))}^2  \leq C (1+|\chi|)  \varepsilon^2.
$$
Here, $C>0$ is independent of $t_1$ and $\varepsilon$. As a result, the decay property \eqref{decay-v} holds and $\bm{v}_\infty=\mathbf{0}$.

Thanks to \eqref{mass-ps} and \eqref{wBEL-2}, it holds
\begin{align}
\alpha \int_0^{+\infty}\!\!\int_\Omega |\overline{\varphi(s)}-c_0||\mu(s)| \,\mathrm{d}x\mathrm{d}s
& \leq C\alpha\int_0^{+\infty} e^{-\alpha s} (1+\|\nabla \mu(s)\|)\,\mathrm{d}s \notag\\
& \leq C\alpha \left(1+ \int_0^{+\infty} \|\nabla \mu(s)\|^2\,\mathrm{d}s\right)
<+\infty,
\notag
\end{align}
that is, $\alpha (\overline{\varphi}-c_0)\mu\in L^1(\Omega\times (0,+\infty))$. Recalling the energy equality \eqref{a-wBEL} for any $t_2\geq t_1=1$, we infer from \eqref{wBEL-2}, \eqref{decay-v}, \eqref{L1vmup} and the above fact that
\begin{align}
& \lim_{t\to +\infty} E(\bm{v}(t),\varphi(t),\sigma(t))=\lim_{t\to +\infty} \mathcal{F}(\varphi(t), \sigma(t))=E_0,
\notag
\end{align}
for some constant $E_0\in \mathbb{R}$. Hence, the total energy $E(\bm{v},\varphi,\sigma)$ and the free energy $\mathcal{F}(\varphi, \sigma)$ are equal constant on $\omega(\bm{v},\varphi, \sigma)$.

Let $(\mathbf{0},\varphi_\infty,\sigma_\infty)$ be an arbitrarily given element in $\omega(\bm{v}, \varphi, \sigma)$. We denote its corresponding convergent subsequence by $\big\{(\bm{v}(t_n),\varphi(t_n),\sigma(t_n)\big\}$.

By the Aubin--Lions compactness lemma, we infer from \eqref{uni-es2} and $\varphi\in H^1_{\mathrm{uloc}}([0,+\infty);H^1(\Omega))$ that $\varphi\in BUC([1,+\infty);W^{2-\epsilon,6}(\Omega))$ for any $\epsilon\in (0,1/2)$. In addition, due to the compactness embedding and uniqueness of the weak and strong limits, it holds $\varphi(t_n)\to \varphi_\infty$ strongly in $W^{2-\epsilon,6}(\Omega)$ as $t_n\to +\infty$.
Recalling the instantaneous regularity of $(\varphi,\mu,\sigma)$, for $t\geq 1$, we are allowed to test \eqref{test1.a} by $-\chi(\sigma-\chi\varphi)$ and \eqref{test2.b} by $\sigma-\chi\varphi$, respectively. Adding the resultants together yields
\begin{align}
&\frac12 \frac{\mathrm{d}}{\mathrm{d}t} \|\sigma-\chi\varphi\|^2 +\|\nabla(\sigma-\chi\varphi)\|^2
\notag \\
&\quad=\chi\int_{\Omega}\nabla\mu\cdot\nabla(\sigma-\chi\varphi)\, \mathrm{d}x
+\alpha\chi(\overline{\varphi}-c_0)\int_{\Omega}(\sigma-\chi\varphi)
\,\mathrm{d}x,
\label{energy n}
\end{align}
for almost all $t\geq 1$.
On the other hand, it follows from \eqref{mass-ps} that
\begin{align}
& \frac{\mathrm{d}}{\mathrm{d}t} \Big(\int_{\Omega}-(\sigma-\chi\varphi) (\overline{\sigma}-\chi\overline{\varphi}) +\frac12(\overline{\sigma}-\chi\overline{\varphi})^2\,\mathrm{d}x\Big)
=-\alpha\chi(\overline{\varphi}-c_0) \int_{\Omega}(\overline{\sigma}-\chi\overline{\varphi})\,\mathrm{d}x.
\label{energy n1}
\end{align}
Combining \eqref{energy n}, \eqref{energy n1} and Young's inequality, we obtain
\begin{align}
& \frac{\mathrm{d}}{\mathrm{d}t}\|(\sigma-\chi\varphi) -(\overline{\sigma}-\chi\overline{\varphi})\|^2
+\|\nabla(\sigma-\chi\varphi)\|^2
\le \chi^2 \|\nabla\mu\|^2.
\label{energy n2}
\end{align}
Then by \eqref{mass-ps}, \eqref{wBEL-2}, the Poincar\'{e}--Wirtinger inequality and a similar argument for the decay of $\bm{v}$, we can deduce from \eqref{energy n2} that
\begin{align}
\lim_{t\to +\infty} \|(\sigma(t)-\chi\varphi(t))-(\overline{\sigma_0}-\chi c_0)\| =0. \label{con-aves}
\end{align}
This together with the strong convergence of $\{\varphi(t_n)\}$ in $W^{2-\epsilon,6}(\Omega)$ implies
\begin{align}
\lim_{t_n\to +\infty} \|\sigma(t_n)-[\chi\varphi_\infty+(\overline{\sigma_0}-\chi c_0)]\|=0,
\notag
\end{align}
and thus the limit $\sigma_\infty=\chi\varphi_\infty+(\overline{\sigma_0}-\chi c_0)\in W^{2,6}(\Omega)\cap H^2_N(\Omega)$.

As a consequence, $\omega(\bm{v},\varphi, \sigma)$ is a non-empty compact set in $\bm{L}^2(\Omega)\times W^{2-\epsilon,6}(\Omega)\times W^{2-\epsilon,6}(\Omega)$ for any $\epsilon\in (0,1/2)$.

Below we show that $(\varphi_{\infty}, \sigma_{\infty}) \in (W^{2,6}(\Omega)\cap H^2_N(\Omega))\times (W^{2,6}(\Omega)\cap H^2_N(\Omega))$ is a strong solution to the stationary problem \eqref{5bchv}--\eqref{5cchv}. The boundary condition \eqref{5cchv} is obvious. Besides, \eqref{5f2.b} is a consequence of \eqref{con-aves}, that is, $\sigma_\infty-\chi\varphi_\infty$ is indeed a constant. Thanks to the estimate \eqref{uni-es2} on $\partial_t\varphi$ and $\partial_t\sigma$, by an argument similar to that in \cite[Section 5]{H2} (see also \cite[Section 6]{JWZ}, \cite[Section 4]{WuXu24}), we can verify \eqref{5bchv}.

Concerning the strict separation property of $\varphi_\infty$, we apply a  dynamic approach inspired by \cite{MZ04}. The key observation is that every $(\varphi_\infty,\sigma_\infty)$ can be viewed as (at least) a
global weak solution to the evolution problem \eqref{1f1}--\eqref{ini01} with $\gamma\in (0,1)$, $\bm{v}=\mathbf{0}$ and the initial datum given
by $(\varphi_\infty,\sigma_\infty)$ itself. Then it follows from \cite[Proposition 5.1]{H2} that \eqref{ps1} holds with some $\delta_0\in (0,1)$. Moreover, since the set $\{\varphi_\infty\}$ is compact in $W^{2-\epsilon,6}(\Omega)\hookrightarrow C(\overline{\Omega})$ for $\epsilon\in (0,1/2)$, we can find  $\delta_0\in (0,1)$ independent of $\varphi_\infty$ such that \eqref{ps1} holds (cf. \cite{A2007}). The proof of Proposition \ref{sta} is complete.
\end{proof}

Now we are in a position to prove the second part of Theorem \ref{2main}.

\begin{proof}[\textbf{Proof of Theorem \ref{2main}-(2)}]
Define $\omega(\varphi)=\{\varphi_\infty: (\mathbf{0},\varphi_\infty,\sigma_\infty)\in \omega(\bm{v},\varphi, \sigma)\}$. It follows from the relative compactness of the trajectory $\varphi(t)$ in $W^{2-\epsilon,6}(\Omega)$ and Proposition \ref{sta} that
$$
\lim_{t\to+\infty}\mathrm{dist}_{W^{2-\epsilon,6}} \big(\varphi(t),\omega(\varphi)\big)=0,
$$
for $\epsilon\in (0,1/2)$. Since $W^{2-\epsilon, 6}(\Omega) \hookrightarrow C(\overline{\Omega})$, we infer from \eqref{ps1} that, for every $\delta \in\left(0, \delta_0\right)$, there is some $T_{\mathrm{SP}}\gg 1$ such that
\begin{align}
\|\varphi(t)\|_{C(\overline{\Omega})} \leq 1-\delta, \quad \forall\, t \geq T_{\mathrm{SP}}.
\label{sep}
\end{align}
This completes the proof.
\end{proof}

\subsection{Weak-strong uniqueness and eventual regularity of $(\bm{v},\sigma)$}

To establish the eventual regularity of the velocity field $\bm{v}$, as for the classical Navier--Stokes equations in three dimensions, we need a weak-strong uniqueness result for the full system \eqref{nsch}--\eqref{ini0}.

\begin{proposition} \label{ls}
Let $\Omega$ be a bounded domain in $\mathbb{R}^3$, with boundary $\partial \Omega$ of class $C^3$.
Suppose that hypotheses (H1)--(H3) are satisfied. Consider the initial datum $(\boldsymbol{v}_{0}, \varphi_{0},\sigma_{0})$ that satisfies
$\boldsymbol{v}_{0} \in \bm{H}^1_{0,\mathrm{div}}(\Omega)$, $\varphi_{0} \in H^{2}_N(\Omega)$, $\|\varphi_{0}\|_{L^\infty}\leq 1$, $|\overline{\varphi_{0}}|<1$, $\mu_{0}=-\Delta \varphi_{0}+ \varPsi'(\varphi_{0})\in H^1(\Omega)$ and $\sigma_{0} \in H^{1}(\Omega)$. Furthermore, we suppose that:

	\emph{(1)} $(\bm{v}_1,\varphi_1,\mu_1,\sigma_1)$ is a weak solution to problem \eqref{nsch}--\eqref{ini0} on $[0,T]$ such that
	\begin{align*}
	&\bm{v}_1\in L^{\infty}(0,T;\bm{L}^2_{0,\mathrm{div}}(\Omega))
\cap L^{2}(0,T;\bm{H}^1_{0,\mathrm{div}}(\Omega))
\cap  W^{1,\frac{4}{3}}(0,T;(\bm{H}^1_{0,\mathrm{div}}(\Omega))'),
\\
	&\varphi_1 \in L^\infty(0,T ; W^{2,6}(\Omega)\cap H^2_N(\Omega)),
\quad \partial_t \varphi_1 \in L^{\infty}(0,T; (H^{1}(\Omega))')
\cap L^2(0,T ; H^{1}(\Omega)),
\\
	&  \varphi_1 \in C(\overline{\Omega}\times[0,T]):\max _{t \in[0, T]}\|\varphi_1(t)\|_{C(\overline{\Omega})}<1,
\\
	&\mu_1 \in L^\infty(0,T; H^{1}(\Omega)) \cap L^{2}(0,T ; H^{3}(\Omega)\cap H^2_N(\Omega)),
\\
	&\sigma_1\in L^{\infty}(0, T ; L^6(\Omega))\cap L^{2}(0, T ; H^1(\Omega)),\quad \partial_t\sigma_1\in L^2(0, T ; (H^1(\Omega))'),
	\end{align*}
which satisfies
	\begin{subequations}
				\begin{alignat}{3}
				&\left \langle\partial_t  \bm{ v}_1,\bm{\zeta}\right \rangle_{(\bm{H}^1_{0,\mathrm{div}})',\bm{H}^1_{0,\mathrm{div}}}
+(( \bm{ v}_1 \cdot \nabla)  \bm {v}_1,\bm{ \zeta})+(  2\nu(\varphi_1) D\bm{v}_1,D\bm{ \zeta}) =((\mu_1+\chi \sigma_1)\nabla \varphi_1,\bm {\zeta}), &
\notag \\
				&\left \langle\partial_t \sigma_1,\xi\right \rangle_{(H^1)',H^1}+({\bm{v}_1 \cdot \nabla \sigma_1},\xi) + (\nabla \sigma_1,\nabla \xi)= \chi ( \nabla \varphi_1,\nabla \xi),& 
                \notag
				\end{alignat}
	\end{subequations}
 for any $\bm {\zeta} \in \bm{H}^1_{0,\mathrm{div}}(\Omega)$, $\xi \in H^1(\Omega)$ and almost all $t\in (0,T)$,
 	\begin{subequations}
				\begin{alignat}{3}
					& \partial_t \varphi_1+\bm{v}_1 \cdot \nabla \varphi_1=\Delta \mu_1-\alpha(\overline{\varphi_1}-c_0),&
                    \notag \\
				& \mu_1=-\Delta \varphi_1+\varPsi'(\varphi_1)-\chi \sigma_1+\beta\mathcal{N}(\varphi_1-\overline{\varphi_1}),&
                \notag
				\end{alignat}
	\end{subequations}
almost everywhere in $\Omega \times (0,T)$. Moreover, the initial conditions are fulfilled
$$
\bm {v}_1|_{t=0}=\bm{v}_{0},\quad \varphi_1|_{t=0}=\varphi_{0},\quad  \sigma_1|_{t=0}=\sigma_{0}.
$$
	
\emph{(2)} $(\bm{v}_2,p_2,\varphi_2,\mu_2,\sigma_2)$ is a more regular solution to problem \eqref{nsch}--\eqref{ini0} on $[0,T]$ such that
\begin{align*}
&\bm{v}_2\in C([0,T] ;\bm{H}^1_{0,\mathrm{div}}(\Omega) ) \cap L^{2}(0,T ; \bm{H}^2(\Omega))
\cap H^{1}(0,T ; \bm{L}^2_{0,\mathrm{div}}(\Omega)),
\\
& p_2\in L^2(0,T;H^1(\Omega)),
\\
&\varphi_2 \in L^\infty(0,T ; W^{2,6}(\Omega)\cap H^2_N(\Omega)),
\quad \partial_t \varphi_2 \in L^{\infty}(0,T; (H^{1}(\Omega))')
\cap L^2(0,T ; H^{1}(\Omega)),
\\
&   \varphi_2 \in C(\overline{\Omega}\times[0,T]):\max _{t \in[0, T]}\|\varphi_2(t)\|_{C(\overline{\Omega})}<1,
\\
&\mu_2 \in L^\infty(0,T; H^{1}(\Omega)) \cap L^{2}(0,T ; H^{3}(\Omega)\cap H^2_N(\Omega)),
\\
&\sigma_2\in L^\infty(0,T ; L^6(\Omega)) \cap L^{2}(0,T ; H^{1}(\Omega)),
\quad \partial_t\sigma_2\in L^2(0, T ; (H^1(\Omega))').
\end{align*}
The solution $(\bm{v}_2,\varphi_2,\mu_2,\sigma_2)$  fulfills the equations  \eqref{f3.c}--\eqref{f4.d} almost everywhere in $\Omega\times (0,T)$, and
\begin{align}
&\left \langle\partial_t \sigma_2,\xi\right \rangle_{(H^1)',H^1}+({\bm{v}_2 \cdot \nabla \sigma_2},\xi) + (\nabla \sigma_2,\nabla \xi)= \chi ( \nabla \varphi_2,\nabla \xi),&
\notag
\end{align}
for any $\xi \in H^1(\Omega)$ and almost all $t\in (0,T)$. Moreover, the initial conditions are fulfilled
$$
\bm {v}_2|_{t=0}=\bm{v}_{0},\quad \varphi_2|_{t=0}=\varphi_{0},\quad  \sigma_2|_{t=0}=\sigma_{0}.
$$
Then we have
$$
(\bm{v}_{1},\varphi_{1},\mu_{1},\sigma_{1})=(\bm{v}_{2},\varphi_{2},\mu_{2},\sigma_{2})\quad
\text{on}\ \ [0,T].
$$
\end{proposition}
\begin{proof}
Denote the differences of two solutions by
$$
(\bm{v},\varphi,\mu,\sigma) =(\bm{v}_{1}-\bm{v}_{2},\varphi_{1}-\varphi_{2}, \mu_1-\mu_2,\sigma_{1}-\sigma_{2}).
$$
By assumption, we have
$$\bm{v}(0)=\mathbf{0},\quad
\varphi(0)=\sigma(0)=0, \quad
\overline{\varphi(t)}=\overline{\sigma(t)}=0,
\quad \forall\, t\in [0,T].
$$
Moreover, there exists $\delta_*\in (0,1)$ such that
$$
\|\varphi_1(t)\|_{C(\overline{\Omega})}\leq 1-\delta_*,\quad \|\varphi_2(t)\|_{C(\overline{\Omega})}\leq 1-\delta_*,\quad \forall\, t\in[0,T].
$$
The strict separation property of $\varphi_1$, $\varphi_2$ will play a crucial role in the subsequent proof.

The solution $(\bm{v}_1,\varphi_1,\mu_1,\sigma_1)$ satisfies the energy inequality \eqref{wBEL} on $[0,T]$. Since $(\bm{v}_2,\varphi_2,\mu_2,\sigma_2)$ is sufficiently regular, it satisfies the corresponding energy equality. In addition, we see that $\bm{v}_1$ can be used as a test function for the equation of $\bm{v}_2$ and vice versa. Adding the resultants together and integrating on $[0,t]\subset [0,T]$, we obtain
\begin{align}
&(\bm{v}_1(t),\bm{v}_2(t)) - \|\bm{v}_0\|^2
\notag \\
&\quad =-\int_0^t\big((\bm{v}_1 \cdot \nabla) \bm{v}_1,\bm{v}_2\big)\,\mathrm{d} s
-\int_0^t\big((\bm{v}_2 \cdot \nabla) \bm{v}_2,\bm{v}_1\big)\,\mathrm{d} s
\notag \\
&\qquad -\int_0^t\!\int_\Omega 2\nu(\varphi_1)D\bm{v}_1: \nabla \bm{v}_2\,\mathrm{d}x\mathrm{d}s
  -\int_0^t\!\int_\Omega 2\nu(\varphi_2)D\bm{v}_2: \nabla \bm{v}_1\,\mathrm{d}x\mathrm{d}s
\notag \\
&\qquad + \int_0^t\!\int_\Omega (\mu_1+\chi\sigma_1)\nabla \varphi_1 \cdot \bm{v}_2 \,\mathrm{d}x\mathrm{d}s
  + \int_0^t\!\int_\Omega (\mu_2+\chi\sigma_2)\nabla \varphi_2 \cdot \bm{v}_1 \,\mathrm{d}x\mathrm{d}s.
\label{diff-u1}
\end{align}
Subtracting
\eqref{diff-u1} from the sum of the energy inequality \eqref{wBEL} for $(\bm{v}_1,\varphi_1,\mu_1,\sigma_1)$ with the corresponding energy equality for $(\bm{v}_2,\varphi_2,\mu_2,\sigma_2)$, using the incompressibility condition for $\bm{v}_1$, $\bm{v}_2$, and integration by parts, we find
\begin{align}
&\frac12\|\bm{v}(t)\|^2 + \mathcal{F}(\varphi_1(t),\sigma_1(t))+ \mathcal{F}(\varphi_2(t),\sigma_2(t))
+ \int_0^t\!\int_\Omega 2\nu(\varphi_1) |D\bm{v}|^2\,\mathrm{d}x\mathrm{d}s
\notag\\
&\qquad + \int_0^{t} \!\int_\Omega \left(|\nabla \mu_1|^2+ |\nabla \mu_2|^2\right)\,\mathrm{d}x\mathrm{d}s
+\int_0^t\!\int_\Omega \left(|\nabla (\sigma_1-\chi\varphi_1)|^2+ |\nabla (\sigma_2-\chi\varphi_2)|^2\right)\,\mathrm{d}x\mathrm{d}s
 \notag\\
&\quad \leq  2\mathcal{F}(\varphi_0,\sigma_0)
-\int_0^t\!\int_\Omega (\bm{v}\cdot\nabla ) \bm{v}_2 \cdot \bm{v}\,\mathrm{d}x\mathrm{d}s
 -\int_0^t\!\int_\Omega (2(\nu(\varphi_1)-\nu(\varphi_2))D \bm{v}_2): D \bm{v}\,\mathrm{d}x \mathrm{d}s
 \notag\\
&\qquad - \int_0^t\!\int_\Omega (\mu_1+\chi\sigma_1)\nabla \varphi_1 \cdot \bm{v}_2 \,\mathrm{d}x\mathrm{d}s
- \int_0^t\!\int_\Omega (\mu_2+\chi\sigma_2)\nabla \varphi_2 \cdot \bm{v}_1 \,\mathrm{d}x\mathrm{d}s
\notag \\
&\qquad -\alpha\int_{0}^{t}\!\int_\Omega (\overline{\varphi}_1-c_0)\mu_1\, \mathrm{d}x\mathrm{d}s
-\alpha\int_{0}^{t}\!\int_\Omega (\overline{\varphi}_2-c_0)\mu_2\, \mathrm{d}x\mathrm{d}s.
\label{diff-u2}
\end{align}
On the other hand, due to Theorem \ref{vch}, both solutions $(\varphi_1, \mu_1,\sigma_1)$, $(\varphi_2, \mu_2,\sigma_2)$ satisfy the energy equality \eqref{a-wBEL} on $[0,T]$ with $\bm{v}=\bm{v}_1, \bm{v}_2$, respectively. Subtracting them from the inequality \eqref{diff-u2} yields
\begin{align}
&\frac12\|\bm{v}(t)\|^2 + \int_0^t\int_\Omega 2\nu(\varphi_1) |D\bm{v}|^2\,\mathrm{d}x\mathrm{d}s
\notag\\
&\quad \leq
-\int_0^t\!\int_\Omega (\bm{v}\cdot\nabla ) \bm{v}_2 \cdot \bm{v}\,\mathrm{d}x\mathrm{d}s
 -\int_0^t\!\int_\Omega \big(2(\nu(\varphi_1)-\nu(\varphi_2))D \bm{v}_2\big): D \bm{v}\,\mathrm{d}x \mathrm{d}s
 \notag\\
&\qquad + \int_0^t\!\int_\Omega (\mu+\chi\sigma)\nabla \varphi_1 \cdot \bm{v}  \,\mathrm{d}x\mathrm{d}s
+ \int_0^t\!\int_\Omega (\mu_2+\chi\sigma_2)\nabla \varphi  \cdot \bm{v} \,\mathrm{d}x\mathrm{d}s
=:\sum_{j=1}^4J_i.
\label{diff-u3}
\end{align}
Testing the equation for the difference $\varphi$ by $-\Delta \varphi$, integrating over $\Omega\times [0,t]$ gives
\begin{align}
&\frac12\|\nabla \varphi(t)\|^2 + \int_0^t \|\nabla \Delta \varphi\|^2\,\mathrm{d}s
\notag \\
& \quad =
 \int_0^t\!\int_\Omega (\bm{v}\cdot\nabla\varphi_1+ \bm{v}_2\cdot\nabla \varphi)\Delta \varphi \,\mathrm{d}x\mathrm{d}s
+ \int_0^t\!\int_\Omega \nabla (\varPsi'(\varphi_1)-\varPsi'(\varphi_2))\cdot \nabla \Delta \varphi\,\mathrm{d}x\mathrm{d}s \notag\\
&\qquad -\chi \int_0^t\!\int_\Omega \nabla \sigma\cdot \nabla \Delta \varphi\,\mathrm{d}x\mathrm{d}s
-\beta \int_0^t \|\nabla \varphi\|^2\,\mathrm{d}s
=:\sum_{j=5}^8J_i.
\label{diff-p1}
\end{align}
Next, we are allowed to test the weak formulation for the difference $\sigma$ by $ \sigma$. Integrating the resultant over $[0,t]$ yields
\begin{align}
&\frac12\|\sigma(t)\|^2 + \int_0^t\|\nabla \sigma\|^2\,\mathrm{d}s
\notag \\
&\quad  =
- \int_0^t\!\int_\Omega (\bm{v}\cdot\nabla\sigma_2)\sigma \,\mathrm{d}x\mathrm{d}s
+\chi \int_0^t\!\int_\Omega \nabla \varphi \cdot \nabla\sigma \,\mathrm{d}x\mathrm{d}s
=:\sum_{i=9}^{10}J_i,
\label{diff-s1}
\end{align}
Adding \eqref{diff-u3}, \eqref{diff-p1} and \eqref{diff-s1} times by the factor $(1+\chi^2)$ together, using (H1) and Korn's inequality, we obtain
\begin{align}
&\frac12\|\bm{v}(t)\|^2 +\frac12\|\nabla \varphi(t)\|^2
+ \frac{1}{2}(1+\chi^2)\|\sigma(t)\|^2
+ \nu_*\int_0^t\|\nabla \bm{v}\|^2\,\mathrm{d}s
\notag\\
&\quad + \int_0^t \|\nabla \Delta \varphi\|^2\,\mathrm{d}s
+ (1+\chi^2)\int_0^t\|\nabla \sigma\|^2\,\mathrm{d}s
 \leq  \sum_{i=1}^8J_i +(1+\chi^2) \sum_{j=9}^{10}J_i.
 \label{diff-ups}
\end{align}
Let us estimate the right-hand side of \eqref{diff-ups} term by term.
\begin{align*}
J_1&\leq \|\nabla \bm{v}_2\|_{\bm{L}^3}\|\bm{v}\|_{\bm{L}^3}^2
\leq \frac{\nu_*}{8}\int_0^t\|\nabla \bm{v}\|^2\,\mathrm{d}s
+C \int_0^t \|\nabla \bm{v}_2\|_{\bm{L}^3}^2\|\bm{v}\|^2\,\mathrm{d}s,
\end{align*}
\begin{align}
J_2
&\leq C\int_0^t\left\|\int_0^1 \nu'(r\varphi_1+(1-r)\varphi_2)\varphi\,\mathrm{d}r\right\|_{L^6}
\|D\bm{v}_2\|_{\bm{L}^3}\|\nabla  \bm{v}\|\,\mathrm{d}s
\notag\\
&\leq C\int_0^t\|\nabla \varphi\| \|D\bm{v}_2\|_{\bm{L}^3} \|\nabla  \bm{v}\|\,\mathrm{d}s
 \notag\\
&\leq \frac{\nu_*}{8} \int_0^t \|\nabla \bm{v}\|^2\,\mathrm{d}s
+C \int_0^t\|D\bm{v}_2\|_{\bm{L}^3}^2\|\nabla \varphi\|^2\,\mathrm{d}s.
\notag
\end{align}
Using the expression of $\mu$, the incompressibility condition and integration by parts, we estimate $J_3$ and $J_5$ as follows
\begin{align*}
J_3 +J_5
& = \int_0^t\!\int_\Omega ( \varPsi'(\varphi_1)-\varPsi'(\varphi_2) + \beta \mathcal{N}\varphi )\nabla \varphi_1 \cdot \bm{v}  \,\mathrm{d}x\mathrm{d}s - \int_0^t\!\int_\Omega  \varphi\bm{v}_2 \cdot \nabla \Delta \varphi \,\mathrm{d}x\mathrm{d}s
\notag \\
&\leq \int_0^t\left\|\int_0^1 \varPsi''(r\varphi_1+(1-r)\varphi_2)\varphi\,\mathrm{d}r\right\|_{L^6}
\|\nabla \varphi_1 \|_{\bm{L}^3}\|\bm{v}\|\,\mathrm{d}s
\notag\\
&\quad + C\int_0^t \|\mathcal{N}\varphi\|_{L^6}
\|\nabla \varphi_1 \|_{\bm{L}^3}\|\bm{v}\|\,\mathrm{d}s
+ \int_0^t\|\bm{v}_2\|_{\bm{L}^3}\|\varphi\|_{L^6} \|\nabla \Delta \varphi \| \,\mathrm{d}s
\notag \\
&\leq \frac{1}{8} \int_0^t \|\nabla \Delta \varphi \|^2\, \mathrm{d}s
+ C\int_0^t (1+\|\bm{v}_2\|_{\bm{L}^3}^2)\|\nabla \varphi\|^2\,\mathrm{d}s + \int_0^t \|\nabla \varphi_1 \|_{\bm{L}^3}^2\|\bm{v}\|^2\,\mathrm{d}s.
\end{align*}
By H\"{o}lder's and Young's inequalities, we find
\begin{align*}
J_4&\leq C \int_0^t  (\|\mu_2\|_{L^3}+\|\sigma_2\|_{L^3})\|\nabla \varphi\|\|\bm{v}\|_{L^6} \,\mathrm{d}s
\notag\\
&\leq \frac{\nu_*}{8} \int_0^t \|\nabla \bm{v}\|^2\,\mathrm{d}s
+C \int_0^t  (\|\mu_2\|_{L^3}^2+\|\sigma_2\|_{L^3}^2)\|\nabla \varphi\|^2\, \mathrm{d}s,
\end{align*}
\begin{align*}
J_6 &\leq \int_0^t\left\|\nabla \left(\int_0^1 \varPsi''(r\varphi_1+(1-r)\varphi_2)\varphi\,\mathrm{d}r\right)\right\|
\|\nabla \Delta \varphi \|\,\mathrm{d}s
\notag \\
&\leq C \int_0^t \max_{r\in[-1+\delta_*,1-\delta_*]}|\varPsi''(r)|\|\nabla \varphi\|
 \|\nabla \Delta \varphi \|\,\mathrm{d}s
\notag\\
&\quad + C \int_0^t  \max_{r\in[-1+\delta_*,1-\delta_*]}|\varPsi'''(r)|
(\|\nabla \varphi_1\|_{\bm{L}^3}+\|\nabla \varphi_2\|_{\bm{L}^3})\|\varphi\|_{L^6}\|\nabla \Delta \varphi \|\,\mathrm{d}s
\notag \\
&\leq \frac{1}{8} \int_0^t \|\nabla \Delta \varphi \|^2\, \mathrm{d}s
+ \int_0^t (\|\nabla \varphi_1 \|_{\bm{L}^3}^2
+\|\nabla \varphi_2 \|_{\bm{L}^3}^2)\|\nabla \varphi\|^2\,\mathrm{d}s,
\end{align*}
\begin{align*}
J_7&\leq \frac{\chi^2}{2} \int_0^t  \|\nabla \sigma\|^2\,\mathrm{d}s
+ \frac12 \int_0^t \|\nabla \Delta \varphi\|^2\,\mathrm{d}s,
\end{align*}
\begin{align*}
(1+\chi^2)J_{10}&\leq \frac{(1+\chi^2)}{4} \int_0^t  \|\nabla \sigma\|^2\,\mathrm{d}s + C(1+\chi^2)\chi^2 \int_0^t  \|\nabla \varphi\|^2\,\mathrm{d}s.
\end{align*}
Finally, using the incompressibility condition and integration by parts, we get
\begin{align*}
(1+\chi^2)J_9& = (1+\chi^2)  \int_0^t\!\int_\Omega (\bm{v}\cdot\nabla\sigma)\sigma_2 \,\mathrm{d}x\mathrm{d}s
\notag \\
& \leq (1+\chi^2) \int_0^t \|\bm{v}\|_{\bm{L}^3}\|\nabla \sigma\|\|\sigma_2\|_{L^6}\,\mathrm{d}s\\
&\leq \frac{\nu_*}{8} \int_0^t \|\nabla \bm{v}\|^2\,\mathrm{d}s
+ \frac{1+\chi^2}{4} \int_0^t  \|\nabla \sigma\|^2\,\mathrm{d}s
+ C \int_0^t \|\sigma_2\|_{L^6}^4 \|\bm{v}\|^2\,\mathrm{d}s.
\end{align*}
Collecting the above estimates and using the Sobolev embedding theorem, we can deduce from \eqref{diff-ups} that
\begin{align}
&\mathcal{Y}(t)
+ \nu_* \int_0^t\|\nabla \bm{v}\|^2\,\mathrm{d}s
+ \frac{1}{2}\int_0^t \|\nabla \Delta \varphi\|^2\,\mathrm{d}s
+ \int_0^t\|\nabla \sigma\|^2\,\mathrm{d}s
 \leq  C \int_0^t\mathcal{Z}(s)\mathcal{Y}(s)\,\mathrm{d}s,
 \label{diff-ups1}
\end{align}
where
\begin{align*}
& \mathcal{Y}(t)=  \|\bm{v}(t)\|^2 +  \|\nabla \varphi(t)\|^2
+  (1+\chi^2) \|\sigma(t)\|^2,\notag\\
& \mathcal{Z}(t)=1+  \|\bm{v}_2(t)\|_{\bm{W}^{1,3}}^2 + \|\varphi_1(t) \|_{W^{1,3}}^2+ \|\varphi_2(t) \|_{W^{1,3}}^2 + \|\mu_2(t)\|_{L^3}^2+\|\sigma_2(t)\|_{L^3}^2+ \|\sigma_2(t)\|_{L^6}^4.
\end{align*}
Since $\mathcal{Z}\in L^1(0,T)$, we infer from \eqref{diff-ups1} and Gronwall's lemma that $$\mathcal{Y}(t)=0,\quad \forall\,t\in [0,T].$$
This combined with the Poincar\'{e}--Wirtinger inequality leads to the desired conclusion.

The proof of Proposition \ref{ls} is complete.
\end{proof}

We are ready to prove the third part of Theorem \ref{2main}. The proof is based on the following result for the Navier–Stokes system with variable viscosity (see \cite[Theorem 8]{A2009}):
\begin{lemma}
\label{NSglo}
Let $\Omega$ be a bounded domain in $\mathbb{R}^3$, with boundary $\partial \Omega$ of class $C^2$. Consider
\begin{align}
&\partial_t  \bm{ u}+\bm{ u} \cdot \nabla  \bm {u}-\mathrm{div} \big(2  \nu(c) D\bm{u} \big)+\nabla p=\boldsymbol{f}, &&\text { in } \Omega \times(0, T),
\label{ns1}
\\
&\mathrm{div}\,\bm{u}=0,&&\text { in } \Omega \times(0, T),
\\
&\bm{u}=\mathbf{0},&&\text { on } \partial\Omega \times(0, T),
\\
&\bm{u}|_{t=0}=\bm{u}_0,&&\text { in } \Omega,
\label{ns2}
\end{align}
for given data $c$, $\bm{u}_0$, $\bm{f}$.
Suppose that the assumption (H1) is satisfied,
$c\in BUC([0,+\infty);W^{1,6}(\Omega))$, $\bm{u}_0\in \bm{H}^1_{0,\mathrm{div}}(\Omega)$ and $\bm{f}\in L^2(0,+\infty;\bm{L}^2_{0,\mathrm{div}}(\Omega))$. There is some $\varepsilon_0>0$ such that, if
$$
\|\bm{u}_0\|_{\bm{H}^1_{0,\mathrm{div}}} +\|\bm{f}\|_{L^2(0,+\infty;\bm{L}^2_{0,\mathrm{div}}(\Omega))}\leq \varepsilon_0,
$$
then there is a unique solution $\bm{u}\in L^2(0,T;D(\bm{S}))\cap H^1(0,T;\bm{L}^2_{0,\mathrm{div}}(\Omega))$ of problem \eqref{ns1}--\eqref{ns2} with $T=+\infty$.
\end{lemma}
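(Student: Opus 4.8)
The plan is to read Lemma \ref{NSglo} as a global-in-time well-posedness statement for a semilinear evolution equation governed by a time-dependent, variable-viscosity Stokes-type operator, and to close it by a contraction argument that exploits the smallness of the data. Introduce the operator $A(t)\bm{u}=-\bm{P}\,\mathrm{div}\big(2\nu(c(t))D\bm{u}\big)$ with domain $D(A(t))=D(\bm{S})$, and recast problem \eqref{ns1}--\eqref{ns2} in the abstract form
\[
\partial_t\bm{u}+A(t)\bm{u}=-\bm{P}(\bm{u}\cdot\nabla\bm{u})+\bm{P}\bm{f},\qquad \bm{u}(0)=\bm{u}_0.
\]
The difficulty then splits into (i) a linear maximal-regularity theory for the family $\{A(t)\}_{t\ge 0}$, uniform as the time horizon tends to $+\infty$, and (ii) a fixed-point treatment of the quadratic convection term. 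The natural solution class is $\mathbb{X}:=L^2(0,+\infty;D(\bm{S}))\cap H^1(0,+\infty;\bm{L}^2_{0,\mathrm{div}}(\Omega))$ (the conclusion displayed in the lemma being this maximal-regularity space, up to an evident typo).

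For step (i), under (H1) the coefficient satisfies $0<\nu_*\le\nu(c)\le\nu^*$, while $c\in BUC([0,+\infty);W^{1,6}(\Omega))$ together with the embedding $W^{1,6}(\Omega)\hookrightarrow C(\overline{\Omega})$ in three dimensions furnishes a uniform-in-time modulus of continuity for the coefficients. I would decompose $A(t)=A_0(t)+B(t)$, where $A_0(t)\bm{u}=-\bm{P}\big(\nu(c)\Delta\bm{u}\big)$ is the principal, uniformly elliptic part and $B(t)\bm{u}=-2\bm{P}\big(D\bm{u}\,\nabla\nu(c)\big)$ collects the genuinely variable, lower-order part. By a freezing-of-coefficients argument $A_0(t)$ generates analytic semigroups with uniform bounds, while $B(t)$ is subordinate to $\bm{S}$: using $\nabla\nu(c)=\nu'(c)\nabla c$, Hölder's and the Gagliardo--Nirenberg inequalities give
\[
\|B(t)\bm{u}\|\le C\|\nabla c\|_{\bm{L}^6}\|D\bm{u}\|_{\bm{L}^3}\le C\Big(\sup_{t\ge 0}\|\nabla c\|_{\bm{L}^6}\Big)\|\nabla\bm{u}\|^{1/2}\|\bm{S}\bm{u}\|^{1/2},
\]
which can be absorbed by Young's inequality. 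A standard perturbation argument then yields maximal $L^2$-regularity for $\{A(t)\}$, uniform on $[0,+\infty)$: for every $\bm{g}\in L^2(0,+\infty;\bm{L}^2_{0,\mathrm{div}}(\Omega))$ and $\bm{u}_0\in\bm{H}^1_{0,\mathrm{div}}(\Omega)$ the linear problem has a unique solution $\bm{u}\in\mathbb{X}$ with $\|\bm{u}\|_{\mathbb{X}}\le C\big(\|\bm{u}_0\|_{\bm{H}^1_{0,\mathrm{div}}}+\|\bm{g}\|_{L^2(0,+\infty;\bm{L}^2_{0,\mathrm{div}})}\big)$, the constant being independent of the horizon.

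For step (ii), recall the continuous embeddings $\mathbb{X}\hookrightarrow BUC([0,+\infty);\bm{H}^1_{0,\mathrm{div}}(\Omega))$ and $\mathbb{X}\hookrightarrow L^2(0,+\infty;\bm{H}^2(\Omega)\cap\bm{H}^1(\Omega))$, whence the bilinear bound $\|\bm{w}\cdot\nabla\bm{w}\|_{L^2(0,+\infty;\bm{L}^2)}\le C\|\bm{w}\|_{\mathbb{X}}^2$ follows from Hölder's and the Gagliardo--Nirenberg inequalities. Defining $\mathcal{T}(\bm{w})=\bm{u}$ as the $\mathbb{X}$-solution of $\partial_t\bm{u}+A(t)\bm{u}=-\bm{P}(\bm{w}\cdot\nabla\bm{w})+\bm{P}\bm{f}$, the maximal-regularity estimate and the bilinear bound give $\|\mathcal{T}(\bm{w})\|_{\mathbb{X}}\le C\big(\|\bm{u}_0\|_{\bm{H}^1_{0,\mathrm{div}}}+\|\bm{f}\|_{L^2(\bm{L}^2)}+\|\bm{w}\|_{\mathbb{X}}^2\big)$, together with an analogous Lipschitz bound for $\mathcal{T}(\bm{w}_1)-\mathcal{T}(\bm{w}_2)$. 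Choosing $\varepsilon_0$ small, $\mathcal{T}$ maps a small closed ball of $\mathbb{X}$ into itself and is a contraction there; its unique fixed point is the desired unique global solution, and uniqueness in the whole class follows from the same contraction estimate.

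I expect the main obstacle to be step (i): securing maximal $L^2$-regularity for the \emph{non-autonomous} operator $\{A(t)\}$ that is uniform as $T\to+\infty$, using only the weak regularity $c\in BUC([0,+\infty);W^{1,6})$. The delicate points are that $B(t)$ is merely subordinate (not small), so its absorption into $\nu_*\|\bm{S}\bm{u}\|^2$ must be carried out carefully and globally in time, and that the smallness threshold $\varepsilon_0$ must not degenerate as the interval grows — which is precisely what forces the smallness hypothesis on $\|\bm{u}_0\|_{\bm{H}^1_{0,\mathrm{div}}}+\|\bm{f}\|_{L^2(\bm{L}^2)}$ and rules out a large-data statement. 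A more hands-on alternative avoids abstract maximal regularity: combine short-time existence via Galerkin with the a priori differential inequality $\tfrac{\mathrm{d}}{\mathrm{d}t}\|\nabla\bm{u}\|^2+\nu_*\|\bm{S}\bm{u}\|^2\le C\|\nabla\bm{u}\|^2+C\|\nabla\bm{u}\|^6+C\|\bm{f}\|^2$ and a continuation argument keeping $\|\nabla\bm{u}(t)\|$ below the threshold for all $t$; I would fall back on this route if quoting the non-autonomous maximal-regularity input proved awkward.
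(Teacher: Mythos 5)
The first thing to say is that the paper contains no proof of Lemma \ref{NSglo} to compare against: the lemma is imported verbatim from \cite[Theorem 8]{A2009}, and the paper simply cites it. Judged on its own terms, your sketch follows the standard architecture by which such results are actually proved (and is close in spirit to Abels's approach): recast \eqref{ns1}--\eqref{ns2} as a semilinear evolution equation, obtain horizon-uniform maximal $L^2$-regularity for the non-autonomous variable-viscosity Stokes operator, and close by a small-ball contraction. Your decomposition $A(t)=A_0(t)+B(t)$ is sound, and the key structural observations are correct: $W^{1,6}(\Omega)\hookrightarrow C^{0,1/2}(\overline{\Omega})$ makes $\nu(c)$ uniformly H\"{o}lder in space, $B(t)$ has relative bound zero with respect to $\bm{S}$ (so $\|B(t)\bm{u}\|\le \varepsilon\|\bm{S}\bm{u}\|+C_\varepsilon\|\nabla\bm{u}\|$), the solution class should indeed read $L^2(0,+\infty;D(\bm{S}))\cap H^1(0,+\infty;\bm{L}^2_{0,\mathrm{div}}(\Omega))$, and your bilinear estimate in $\mathbb{X}$ is fine. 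The weak point is exactly where you flag it: step (i) is asserted (``a standard perturbation argument then yields maximal $L^2$-regularity \dots uniform on $[0,+\infty)$'') rather than carried out. With $c$ merely $BUC$ in time, maximal regularity in $\bm{L}^2_{0,\mathrm{div}}(\Omega)$ (as opposed to the Lions variational version with values in the dual space, for which time-measurability suffices) is not a one-line perturbation: it requires freezing the coefficient in time on small intervals (uniform continuity supplies a common length), maximal regularity for each frozen operator — itself needing spatial localization because the coefficient is only H\"{o}lder — gluing, and then uniform coercivity/exponential stability to make the constant independent of the horizon. Such results exist (Pr\"{u}ss--Schnaubelt-type theorems for coefficients continuous in time), so your route is viable, but as written it is a citation-shaped hole that must be filled by quoting an appropriate theorem.

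Your fallback route has a genuine gap as stated. In the differential inequality $\frac{\mathrm{d}}{\mathrm{d}t}\|\nabla\bm{u}\|^2+\nu_*\|\bm{S}\bm{u}\|^2\le C\|\nabla\bm{u}\|^2+C\|\nabla\bm{u}\|^6+C\|\bm{f}\|^2$, the linear term arises from estimating $2\nu'(c)(D\bm{u})\nabla c$ against $\bm{S}\bm{u}$, so its constant is proportional to a power of $\sup_{t\ge0}\|\nabla c\|_{\bm{L}^6}$ and is in no way small; it therefore cannot be absorbed by the dissipation via $\|\bm{S}\bm{u}\|^2\ge\lambda_1\|\nabla\bm{u}\|^2$, and a continuation argument based on naive Gronwall produces a factor $e^{Ct}$ that destroys smallness for large $t$ — so ``keeping $\|\nabla\bm{u}(t)\|$ below the threshold for all $t$'' does not follow from the inequality you display. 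The missing ingredient is the basic energy estimate: testing \eqref{ns1} by $\bm{u}$ and using (H1), Korn and Poincar\'{e} gives $\sup_{t\ge0}\|\bm{u}(t)\|^2+\int_0^{\infty}\|\nabla\bm{u}\|^2\,\mathrm{d}s\le C\varepsilon_0^2$. With this $L^1$-in-time smallness of $\|\nabla\bm{u}\|^2$ in hand, the uniform Gronwall lemma (the very tool this paper uses in the proof of Theorem \ref{2main}-(3)) applied to $y=\|\nabla\bm{u}\|^2$, with coefficient $C+Cy^2$ bounded while $y$ stays below the threshold, yields $\sup_{t\ge0} y(t)\le C'\varepsilon_0^2$; choosing $\varepsilon_0$ so that $C'\varepsilon_0^2$ lies strictly below the threshold closes the continuation. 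With that repair — plus the $H^2$-elliptic theory for the variable-viscosity Stokes system (again using $\nu(c)\in C^{0,1/2}(\overline{\Omega})$ uniformly in $t$) to convert $\|\bm{S}\bm{u}\|$-control into $\|\bm{u}\|_{\bm{H}^2}$ — your hands-on alternative becomes a complete and self-contained proof, arguably more elementary than the abstract maximal-regularity route.
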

For its application, we refer to \cite{A2009} for the eventual regularity of global weak solutions to the Navier--Stokes--Cahn--Hilliard system with constant density and non-constant viscosity (an alternative approach can be found in \cite{ZWH}). Recent progress on a general Navier--Stokes--Cahn--Hilliard system with non-constant density and viscosity has been made in \cite{A2022}. Here, we confine ourselves to the case with constant density and non-constant viscosity, but take into account extra effects due to chemotaxis, active transport and nonlocal interactions.
\begin{proof}[\textbf{Proof of Theorem \ref{2main}-(3)}]
Let $(\boldsymbol{v},\varphi,\mu,\sigma)$ be a global weak solution to problem \eqref{nsch}--\eqref{ini0} given by Proposition \ref{main}. We have already shown that it
satisfies \eqref{vp-Linf}--\eqref{uni-es2} as well as the strict separation property \eqref{sep}.
Now for the small positive constant $\varepsilon_0$ determined by Lemma \ref{NSglo}, thanks to \eqref{mass-ps} and \eqref{wBEL-2}, there exists some time $T_{\mathrm{R}}=T_{\mathrm{R}}(\varepsilon_0)$ (can be taken larger than $T_{\mathrm{SP}}$, without loss of generality) such that
\begin{align*}
& \|\boldsymbol{v}\|_{L^2(T_{\mathrm{R}},+\infty; \bm{H}^1_{0,\mathrm{div}}(\Omega) )} \leq \frac{\varepsilon_0}{4(1+|\chi|)},
 &&\|\boldsymbol{v}(T_{\mathrm{R}})\|_{\bm{H}^1_{0,\mathrm{div}}} \leq \frac{\varepsilon_0}{2},
\\
& \|\nabla \mu\|_{L^2(T_{\mathrm{R}},+\infty;\bm{L}^2(\Omega))} \leq \frac{\varepsilon_0}{4(1+|\chi|)},
&&\|\nabla \mu(T_{\mathrm{R}})\| \leq \frac{\varepsilon_0}{2},
\\
& \|\nabla (\sigma-\chi\varphi)\|_{L^2(T_{\mathrm{R}},+\infty;\bm{L}^2(\Omega))} \leq \frac{\varepsilon_0}{4(1+|\chi|)},
&&\|\nabla (\sigma(T_{\mathrm{R}})-\chi\varphi(T_{\mathrm{R}}))\| \leq \frac{\varepsilon_0}{2}.
\end{align*}
From the above properties, we further observe that
$$
\varphi(T_{\mathrm{R}}) \in H_N^2(\Omega),\quad \|\varphi(T_{\mathrm{R}})\|_{C(\overline{\Omega})}<1,\quad
\sigma(T_{\mathrm{R}})\in H^1(\Omega),\quad
-\Delta \varphi(T_{\mathrm{R}})+\varPsi'(\varphi(T_{\mathrm{R}})) \in H^1(\Omega).
$$
Now taking $(\bm{v}(T_{\mathrm{R}}), \varphi(T_{\mathrm{R}}), \sigma(T_{\mathrm{R}}))$ as the initial data, according to \cite[Theorem 3.1]{HW2022},  problem \eqref{nsch}--\eqref{ini0} admits a unique local strong solution $(\widehat{\bm{v}},\widehat{p},\widehat{\varphi},\widehat{\mu},\widehat{\sigma})$ defined on the maximal interval $\left[T_{\mathrm{R}}, T_{\max }\right)$ such that for any $\widetilde{T}\in[T_{\mathrm{R}},T_{\max })$, it fulfills
\begin{align*}
&\widehat{\boldsymbol{v}} \in C([T_{\mathrm{R}}, \widetilde{T}] ;\bm{H}^1_{0,\mathrm{div}}(\Omega) ) \cap L^{2}(T_{\mathrm{R}}, \widetilde{T}; D(\bm{S})) \cap H^{1}(T_{\mathrm{R}}, \widetilde{T} ; \bm{L}^2_{0,\mathrm{div}}(\Omega)),
\\
&\widehat{\varphi} \in L^\infty(T_{\mathrm{R}}, \widetilde{T}; W^{2,6}(\Omega)\cap H^2_N(\Omega)),\quad \partial_t \varphi \in L^{\infty}(T_{\mathrm{R}}, \widetilde{T} ; (H^{1}(\Omega))') \cap L^2(T_{\mathrm{R}}, \widetilde{T} ; H^{1}(\Omega)),
\\
& \widehat{\varphi}\in C(\overline{\Omega}\times[T_{\mathrm{R}}, \widetilde{T}])\quad \text{with}\quad  \max_{t\in[T_{\mathrm{R}},\widetilde{T}]} \|\widehat{\varphi}(t)\|_{C(\overline{\Omega})}<1,
\\
&\widehat{\mu} \in L^\infty(T_{\mathrm{R}}, \widetilde{T}; H^{1}(\Omega)) \cap L^{2}(T_{\mathrm{R}}, \widetilde{T} ; H^{3}(\Omega)\cap H^2_N(\Omega)),
\\
&\widehat{\sigma }\in C([T_{\mathrm{R}}, \widetilde{T}] ; H^{1}(\Omega)) \cap L^{2}(T_{\mathrm{R}}, \widetilde{T}; H^2_N(\Omega)) \cap H^{1}(T_{\mathrm{R}}, \widetilde{T}; L^{2}(\Omega)),
\\
&\widehat{p} \in L^{2}(T_{\mathrm{R}}, \widetilde{T} ; V_0).
\end{align*}
The solution $(\widehat{\bm{v}},\widehat{p},\widehat{\varphi},\widehat{\mu},\widehat{\sigma})$ satisfies the system \eqref{nsch} almost everywhere in $\Omega\times (T_{\mathrm{R}}, T_{\max })$, the boundary condition \eqref{boundary}
almost everywhere on $\partial\Omega\times (T_{\mathrm{R}}, T_{\max })$, and the initial conditions $\widehat{\bm{v}}(T_{\mathrm{R}})=\bm{v}(T_{\mathrm{R}})$, $\widehat{\varphi}(T_{\mathrm{R}})=\varphi(T_{\mathrm{R}})$, $\widehat{\sigma}(T_{\mathrm{R}})=\sigma(T_{\mathrm{R}})$ almost everywhere in $\Omega$.
Moreover, it satisfies the energy equality:
\begin{align}
& E(\widehat{\bm{v}}(t_2),\widehat{\varphi}(t_2), \widehat{\sigma}(t_2))
+  \int_{t_1}^{t_2}\!\int_{\Omega} \left( 2\nu(\widehat{\varphi})|D\widehat{\bm{v}}|^2 + |\nabla \widehat{\mu}|^2+|\nabla(\widehat{\sigma}-\chi\widehat{\varphi})|^2\right) \, \mathrm{d}x\mathrm{d}s
\notag\\
&\quad = E(\widehat{\bm{v}}(t_1),\widehat{\varphi}(t_1), \widehat{\sigma}(t_1)) -\alpha\int_{t_1}^{t_2}\!\int_\Omega (\overline{\widehat{\varphi}}-c_0)\widehat{\mu}\, \mathrm{d}x\mathrm{d}s,
\notag
\end{align}
for all $T_{\mathrm{R}}\leq t_1\leq t_2\leq \widetilde{T}$.
Thus, we can apply Theorem \ref{2main}-(1), (2) and Proposition \ref{ls}
to conclude that
\begin{align}
(\bm{v},\varphi,\mu,\sigma)=(\widehat{\bm{v}},\widehat{\varphi}, \widehat{\mu},\widehat{\sigma}),\quad \text{in }[T_{\mathrm{R}}, T_{\max }),
\label{ws-uni}
\end{align}
which implies, for any $\widetilde{T}\in[T_{\mathrm{R}},T_{\max })$, it holds
\begin{align}
& \boldsymbol{v}  \in C([T_{\mathrm{R}}, \widetilde{T}] ;\bm{H}^1_{0,\mathrm{div}}(\Omega) )
\cap L^{2}(T_{\mathrm{R}}, \widetilde{T}; D(\bm{S}))
\cap H^{1}(T_{\mathrm{R}}, \widetilde{T}; \bm{L}^2_{0,\mathrm{div}}(\Omega)),
\notag
\\
& \sigma  \in C([T_{\mathrm{R}}, \widetilde{T}] ; H^{1}(\Omega))
\cap L^{2}(T_{\mathrm{R}}, \widetilde{T}; H^{2}_N(\Omega))
\cap H^{1}(T_{\mathrm{R}}, \widetilde{T}; L^{2}(\Omega)).
\notag
\end{align}

As a consequence, to complete the proof of Theorem \ref{2main}-(3), it remains to show that $T_{\max}=+\infty$.
To this end, we notice that \eqref{wBEL-2}, \eqref{uni-es2} and the separation property \eqref{sep} hold globally in time, and they are independent of $T_{\max}$. Thanks to \eqref{ws-uni}, below we will not distinguish
$(\bm{v},\varphi,\mu,\sigma)$, $(\widehat{\bm{v}},\widehat{\varphi}, \widehat{\mu},\widehat{\sigma})$ since they coincide as long as the latter exists.

Rewrite now the equation \eqref{f3.c} for $\bm{v}$ as follows
\be
\partial_t  \bm{ v}+\bm{ v} \cdot \nabla  \bm {v}-\mathrm{div} \big(2  \nu(\varphi) D\bm{v} \big)+\nabla p^*=\boldsymbol{g}, \quad \text { in } \Omega \times\left(T_{\mathrm{R}}, T_{\max }\right),
\label{veq}
\ee
where
$$
p^*=p-(\mu+\chi\sigma) \varphi +\frac{\chi^2}{2}\varphi^2-z,\quad  \bm{g}=\bm{P}{\bm{f}},
$$
and
$$
\bm{f}=-\varphi \nabla \mu-\chi\varphi \nabla (\sigma-\chi\varphi),\quad
\nabla z=\bm{f}-\bm{g}.
$$
Then it follows from \eqref{vp-Linf}, \eqref{wBEL-2} and the property of the Leray projection $\bm{P}$ that
\begin{align*}
\|\bm{g}\|_{L^2(T_{\mathrm{R}},+\infty;\bm{L}^2_{0,\mathrm{div}}(\Omega))}
&\leq
\|\varphi\|_{L^\infty(T_{\mathrm{R}},+\infty;L^\infty(\Omega))}
 \|\nabla \mu\|_{L^2(T_{\mathrm{R}},+\infty;\bm{L}^2(\Omega))}
 \\
&\quad  + |\chi|\|\varphi\|_{L^\infty(T_{\mathrm{R}},+\infty;L^\infty(\Omega))}\| \nabla (\sigma-\chi\varphi)\|_{L^2(T_{\mathrm{R}},+\infty;\bm{L}^2(\Omega))}
\\
&\leq \frac{\varepsilon_0}{2}.
\end{align*}
Since
$$
\varphi\in BUC([T_{\mathrm{R}},+\infty);W^{1,6}(\Omega)),\quad
\|\bm{v}(T_{\mathrm{R}})\|_{\bm{H}^1_{0,\mathrm{div}}(\Omega))}+\|\bm{g}\|_{L^2(T_{\mathrm{R}},+\infty; \bm{L}^2_{0,\mathrm{div}}(\Omega))}\leq \varepsilon_0,
$$
then we can apply Lemma \ref{NSglo} to conclude that equation \eqref{veq} subject to the no-slip boundary condition and the initial condition $\bm{v}(T_{\mathrm{R}})$ admits a unique global strong solution such that
$$
\bm{v}\in L^2(T_{\mathrm{R}},+\infty;D(\bm{S}))
\cap H^1(T_{\mathrm{R}},+\infty;\bm{L}^2_{0,\mathrm{div}}(\Omega)).
$$
By interpolation, we also find $\bm{v}\in BUC([T_{\mathrm{R}},+\infty);\bm{H}^1_{0,\mathrm{div}}(\Omega))$.

Next, testing equation \eqref{f2.b} for $\sigma$ by $-\Delta\sigma$ and integrating over $\Omega$, we get
\begin{align}
\frac{1}{2} \frac{\mathrm{d}}{\mathrm{d} t} \|\nabla\sigma \|^{2}
+\|\Delta \sigma\|^2
&= (\bm{v} \cdot \nabla \sigma,\Delta\sigma)
+ \chi (\Delta \varphi,\Delta\sigma)
\notag\\
&\leq \|\bm{v}\|_{\bm{L}^\infty} \| \nabla \sigma\|\| \Delta \sigma\|+
|\chi|\|\Delta \varphi\|\|\Delta \sigma\|
\notag \\
&\leq \frac12\|\Delta \sigma\|^2 + \|\bm{v}^k\|_{\bm{L}^\infty}^2 \| \nabla \sigma\|^2 + \chi^2\|\Delta \varphi\|^2.
\notag
\end{align}
Since $\sigma(T_{\mathrm{R}})\in H^1(\Omega)$,
$\bm{v}\in L^2(T_{\mathrm{R}},+\infty;D(\bm{S}))$
and $\varphi\in L^4_{\mathrm{uloc}}([T_{\mathrm{R}},+\infty); H^2(\Omega))$ (thanks to \eqref{wBEL-2} and a similar estimate for \eqref{L6-app1}), keeping in mind the continuous embedding  $\bm{H}^2(\Omega)\hookrightarrow \bm{L}^\infty(\Omega)$,
we can apply the classical Gronwall's lemma on $[T_{\mathrm{R}},T_{\mathrm{R}}+1]$ and the uniform Gronwall lemma on $[T_{\mathrm{R}}+1,+\infty)$ to conclude that
\be
\sup_{t\geq T_{\mathrm{R}}}\left(\|\nabla \sigma(t)\|^2 + \int_{t}^{t+1}\|\Delta \sigma(s)\|^2\,\mathrm{d}s\right)\leq C.
\notag
\ee
As a consequence,
$$
\sigma\in L^\infty(T_{\mathrm{R}},+\infty; H^1(\Omega))\cap L^2_{\mathrm{uloc}}([T_{\mathrm{R}},+\infty); H^2_N(\Omega)).
$$
By comparison in equation \eqref{f2.b}, we further  obtain $\partial_t\sigma\in L^2_{\mathrm{uloc}}([T_{\mathrm{R}},+\infty); L^2(\Omega))$.

In conclusion, we have shown that $T_{\max}=+\infty$ and the solution $(\bm{v},\varphi, \mu,\sigma)$ is  uniformly bounded in the corresponding spaces.

Finally, returning to equation \eqref{f4.d}, due to the improved regularity of $\sigma$, we can apply the elliptic estimate for the Neumann problem, \eqref{uni-es2}, \eqref{sep} and (H2) to conclude that $\varphi\in L^\infty (T_{\mathrm{R}},+\infty; H^3(\Omega))$.

The proof of Theorem \ref{2main}-(3) is complete.
\end{proof}

\appendix
\section{Useful Tools}
\setcounter{equation}{0}
In the appendix, we report a Gronwall-type inequality (see, e.g., \cite[Lemma 2.5]{GGP}) that has been used in this study.

\begin{lemma}\label{GronW}
Let $f$, $m_1$, and $m_2$ be three nonnegative locally summable functions on $[\tau,+\infty)$ which satisfy, for some $\varepsilon>0$, the differential inequality
\begin{align}
\frac{\mathrm{d}}{\mathrm{d}t}f^2(t)+ \varepsilon f^2(t)\leq m_1(t) f(t) + m_2(t)\quad \text{for a.a.}\ t\in [\tau,+\infty).
\label{gron-a}
\end{align}
Besides, assume $f\in C([\tau,+\infty))$. Then
\begin{align}
f^2(t)\leq 2 f^2(\tau) e^{-\varepsilon(t-\tau)}+ \left(\int_\tau^t m_1(s)e^{-\frac{\varepsilon}{2}(t-s)}\,\mathrm{d}s\right)^2
+2\int_\tau^t m_2(s)e^{-\varepsilon(t-s)}\,\mathrm{d}s,
\label{gron-b}
\end{align}
for any $t\in[\tau,+\infty)$. Moreover, the inequality
\begin{align}
    \int_\tau^t m(s)e^{-\varepsilon(t-s)}\,\mathrm{d}s
    \leq \frac{e^{\varepsilon}}{1-e^{-\varepsilon}}\sup_{r\geq \tau}\int_{r}^{r+1}m(s)\,\mathrm{d}s, \quad \forall\, t\in [\tau,+\infty),
    \label{gron-c}
\end{align}
holds for every nonnegative locally summable function $m$ on $[\tau,+\infty)$ and every $\varepsilon>0$.
\end{lemma}
\begin{proof}
For the reader's convenience, we sketch a proof adapted from the argument in \cite{PPV}. Applying the classical Gronwall's lemma to \eqref{gron-a}, we get
\begin{align}
f^2(t)\leq f^2(\tau) e^{-\varepsilon(t-\tau)}
+ \int_\tau^t m_1(s)f(s)e^{-\varepsilon(t-s)}\,\mathrm{d}s + \int_\tau^t m_2(s)e^{-\varepsilon(t-s)}\,\mathrm{d}s,\quad  \forall\, t\in [\tau,+\infty).
\notag
\end{align}
Define
$$g(t)=f(t)e^{\frac{\epsilon}{2}t},\quad \forall\,t\geq \tau.$$
Then we have
\begin{align}
g^2(t)\leq g^2(\tau)
+ \int_\tau^t m_1(s)g(s)e^{\frac{\varepsilon}{2}s}\,\mathrm{d}s + \int_\tau^t m_2(s)e^{ \varepsilon s}\,\mathrm{d}s,\quad  \forall\, t\in [\tau,+\infty).
\label{es-g}
\end{align}
The second term on the right-hand side of \eqref{es-g} can be estimated by
\begin{align*}
\int_\tau^{t} m_1(s)g(s)e^{\frac{\varepsilon}{2}s}\,\mathrm{d}s
& \leq
\frac12 \sup_{s\in[\tau,t]}g^2(s)  + \frac12\left(\int_\tau^t m_1(s) e^{\frac{\varepsilon}{2}s}\,\mathrm{d}s\right)^2,\quad  \forall\, t\in [\tau,+\infty).
\end{align*}
For any given $t\in [\tau,+\infty)$, due to the continuity of $g$, there exists some $t_0\in [\tau,t]$ such that $g(t_0)=\max_{s\in[\tau,t]}g(s)$. From \eqref{es-g} and the above observation, we find
\begin{align}
g^2(t_0)
& \leq g^2(\tau)
+ \frac12 g^2(t_0)
+ \frac12\left(\int_\tau^{t_0} m_1(s) e^{\frac{\varepsilon}{2}s}\,\mathrm{d}s\right)^2
+ \int_\tau^{t_0} m_2(s)e^{ \varepsilon s}\,\mathrm{d}s.\notag
\end{align}
This gives
\begin{align}
g^2(t)\leq g^2(t_0) & \leq 2g^2(\tau)
+ \left(\int_\tau^{t_0} m_1(s) e^{\frac{\varepsilon}{2}s}\,\mathrm{d}s\right)^2
+ 2\int_\tau^{t_0} m_2(s)e^{ \varepsilon s}\,\mathrm{d}s \notag \\
&\leq  2g^2(\tau)
+ \left(\int_\tau^{t} m_1(s) e^{\frac{\varepsilon}{2}s}\,\mathrm{d}s\right)^2
+ 2\int_\tau^{t} m_2(s)e^{ \varepsilon s}\,\mathrm{d}s,
\notag
\end{align}
which is exactly \eqref{gron-b}.

Next, we prove \eqref{gron-c}. Define $\widetilde{m}$ as the zero extension of $m$ on $\big[[\tau],+\infty\big)$ such that $\widetilde{m}(s)=m(s)$ for $s\geq \tau$ and $\widetilde{m}(s)=0$ for $s\in \big[[\tau],\tau\big]$. Here, $[\tau]$ denotes the largest integer that is less than or equal to $\tau$. Obviously, it holds
$$
\sup_{r\geq [\tau]}\int_{r}^{r+1}\widetilde{m}(s)\,\mathrm{d}s=\sup_{r\geq \tau}\int_{r}^{r+1}m(s)\,\mathrm{d}s.
$$
For any $t\in \big[[\tau],[\tau]+1\big]$, we easily find
$$
\int_{[\tau]}^t \widetilde{m}(s)e^{-\varepsilon(t-s)}\,\mathrm{d}s\leq \int_{[\tau]}^{[\tau]+1} \widetilde{m}(s)\,\mathrm{d}s.
$$
Next, for any $t\in \big([\tau]+1,+\infty\big)$, there exists an integer $n\geq [\tau]+1$ such that $t\in (n,n+1]$. Then we can deduce that
\begin{align*}
 \int_{[\tau]}^t \widetilde{m}(s)e^{-\varepsilon(t-s)}\,\mathrm{d}s
 & \leq \sum_{i=[\tau]}^{n} \int_{i}^{i+1} \widetilde{m}(s)e^{-\varepsilon(n-s)}\,\mathrm{d}s
 \\
 &\leq \left( e^{-\varepsilon n}\sum_{i=[\tau]}^{n}e^{\varepsilon(i+1)}\right)\sup_{r\geq [\tau]}\int_{r}^{r+1}\widetilde{m}(s)\,\mathrm{d}s \\
 &\leq \frac{e^\varepsilon}{1-e^{-\varepsilon}} \sup_{r\geq [\tau]}\int_{r}^{r+1}\widetilde{m}(s)\,\mathrm{d}s.
\end{align*}
Collecting the above estimates, we arrive at the conclusion \eqref{gron-c}.
\end{proof}

In what follows, we present a lemma for the approximation of $\sigma$ used in the proof of Proposition \ref{con-CHs-ws}.

Let $H$ and $V$ be two Hilbert spaces. Assume that $V$ is a dense linear subspace of $H$, the inclusion of $V$ into $H$ being continuous, and consider $H$
as embedded into the dual space $V'$ of $V$ by means of the usual formula
$$\langle u,v\rangle_{V',V}=(u,v)_H,\quad \forall\, u\in H, \ v\in V. $$
Introduce the identity or injection operator $I : V\to V'$ and the canonical isomorphism $J$ from $V$ onto $V'$ defined by
$$
\langle Ju,v\rangle_{V',V}=(u,v)_V,\quad \forall\, u,v\in V.
$$
For any $\varepsilon>0$ and $u\in V'$, the following problem
\begin{align}
    (I+\varepsilon^2 J)u_\varepsilon=u
    \label{uu-reg}
\end{align}
admits a unique solution $u_\varepsilon\in V$ thanks to the Lax--Milgram theorem. Moreover, we have the following asymptotic behavior for $u_\varepsilon$ as $\varepsilon\to 0$ (see \cite[Proposition 6.1]{Colli97} for a summary of the results, whose proof follows the arguments in \cite{Lions73}):

\begin{lemma}\label{sig-conv}
Let $\varepsilon>0$. For any $u\in V'$, we denote by $u_\varepsilon$ the unique solution to problem \eqref{uu-reg} in $V$.
\begin{itemize}
    \item[\emph{(1)}] For any $u\in V'$, we have
    \begin{align*}
     & \|u_\varepsilon\|_{V'}\leq \|u\|_{V'}\quad \text{and}\quad u_\varepsilon \to u\ \ \text{in}\ V',\\
     & \varepsilon\|u_\varepsilon\|_{H}\leq \|u\|_{V'}\quad \text{and}\quad \varepsilon u_\varepsilon \to 0\ \ \text{in}\ H\ \ \text{as}\ \varepsilon\to 0,\\
     & \varepsilon^2\|u_\varepsilon\|_{V}\leq \|u\|_{V'}\quad \text{and}\quad \varepsilon^2 u_\varepsilon \to 0\ \ \text{in}\ V \ \text{as}\ \varepsilon\to 0.
    \end{align*}
    \item[\emph{(2)}] Moreover, if $u\in H$, then we have
    \begin{align*}
     &  \|u-u_\varepsilon\|_{V'}\leq \varepsilon\|u\|_H,\\
     &  \|u_\varepsilon\|_{H}\leq \|u\|_{H}\quad \text{and}\quad u_\varepsilon \to u\ \ \text{in}\ H \ \text{as}\ \varepsilon\to 0,\\
     & \varepsilon\|u_\varepsilon\|_{V}\leq \|u\|_{H}\quad \text{and}\quad \varepsilon u_\varepsilon \to 0\ \ \text{in}\ V \ \text{as}\ \varepsilon\to 0.
    \end{align*}
    \item[\emph{(3)}] Finally, if $u\in V$, then we have
    \begin{align*}
     &  \|u-u_\varepsilon\|_{V'}\leq \varepsilon^2\|u\|_V,\\
     &  \|u-u_\varepsilon\|_{H}\leq \varepsilon\|u\|_{V},\\
     & \|u_\varepsilon\|_{V}\leq \|u\|_{V}\quad \text{and}\quad u_\varepsilon \to u\ \ \text{in}\ V \ \text{as}\ \varepsilon\to 0.
    \end{align*}
\end{itemize}
\end{lemma}

\section*{Declarations}
\noindent
\textbf{Conflict of interest.} The authors have no competing interests to declare that are relevant to the content of this article.
\\
\noindent
\textbf{Ethical approval.} This research does not involve humans and/or animals.
\\
\noindent
\textbf{Funding.} J.-N. He was supported by Zhejiang Provincial Natural Science Foundation of China (Grant No. LQ24A010011) and the National Natural Science Foundation of China (Grant No. 12401251). H. Wu was supported by National Natural Science Foundation of China (Grant No. 12071084).
\\
\noindent
\textbf{Data availability.} Data sharing not applicable to this article as no datasets were generated or analyzed during the current study.
\\
\noindent
\textbf{Acknowledgments.}
The authors are greatly indebted to the anonymous referee for many valuable comments on a previous version of this paper. Part of this work was done during the visit of J.-N. He to the Key Laboratory of Mathematics for Nonlinear Sciences (Fudan University), Ministry of Education of China, whose hospitality is gratefully acknowledged.
J.-N. He also acknowledges the support by the RFS Grant from the Research Grants Council (Project P0047825, PI: Prof. Xianpeng Hu) for her research stay in Research Center for Nonlinear Analysis, The Hong Kong Polytechnic University. H. Wu is a member of the Key Laboratory of Mathematics for Nonlinear Sciences (Fudan University), Ministry of Education of China.


\end{document}